\newtheorem{mytheo}{Theorem}[section]
\newtheorem{mydef}[mytheo]{Definition}
\newtheorem{myrem}[mytheo]{Remark}
\newtheorem{mylem}[mytheo]{Lemma}
\newtheorem{mycoro}[mytheo]{Corollary}
\newtheorem{mypropo}[mytheo]{Proposition}
\newtheorem{mycon}[mytheo]{Conjecture}
\theoremstyle{remark}
\numberwithin{equation}{section}
\begin{document}

\title{Singularity of biased discrete random matrices}

\pagestyle{fancy}

%清除原页眉页脚样式
\fancyhf{} 

%R：页面右边；O：奇数页；\leftmark：表示“一级标题”

%C：页面中间
\fancyhead[CO]{\footnotesize DISCRETE RANDOM MATRIX }

\fancyhead[CE]{\footnotesize Z. SONG }

\fancyhead[LE]{\thepage}
\fancyhead[RO]{\thepage}
\renewcommand{\headrulewidth}{0mm}
%    Information for first author
%    Information for second author
%    Information for second author

\author{Zeyan Song}
\address{Shandong University, Jinan, 250100, China.}
\email{zeyansong8@gmail.com}
\subjclass[2020]{60B20, 15B52}

\date{}

\keywords{random matrices}

\begin{abstract}
We study the singularity probability of $n \times n$ random matrices with i.i.d. entries from highly biased discrete distributions. We obtain sharp non-asymptotic bounds for this probability and derive estimates on the least singular values. Our method combines combinatorial, geometric, and probabilistic techniques such as sphere decomposition and anticoncentration inequalities. The results extend classical invertibility theory to biased discrete settings and resolve an open problem by characterizing the dominant causes of singularity in biased discrete random matrices, namely the presence of zero columns or linearly dependent column pairs.
\end{abstract}

\maketitle

\section{Introduction}\label{Intro}
Let $A=(a_{ij})_{n\times n}$ be an $  n\times n$ random matrix with independent and identically distributed entries. The singularity probability of $A$ is a classical problem in the literature. An important and widely studied question is: What is the probability that a matrix $ B$  with entries uniformly distributed in $\{ -1,1 \}$ is singular, for which the celebrated conjecture states:
\begin{align}
    \textsf{P}\left( \det(B)=0 \right)=(1+o_{n}(1))2n^{2}2^{-n}.\nonumber
\end{align}
Note that the right-hand side is equal to the probability that the matrix $B$ contains two columns or two rows that are identical or negative to each other.

The singularity probability was first investigated by Koml\'{o}s in the 1960s, who showed that it is equal to $O(n^{-1/2})$. Thirty years later, this upper bound was improved by Kahn, Koml\'{o}s, and Szemer\'{e}di in \cite{KKS_jams} to $\left( 0.998+o_{n}(1) \right)^{n}$, which is the first exponential bound. Regarding the exponential bound, Tao and Vu reduced the upper bound to $\left( 3/4+o_{n}(1) \right)^{n}$ in \cite{Tao_rsa, Tao_jams}. Furthermore, Bourgain, Vu, and Wood provided that the upper bound is $(2^{-1/2}+o_{n}(1))^{n}$.

A landmark contribution to the study of singularity probability is the work of Rudelson and Vershynin in \cite{Rudelson_adv}, who established precise bounds on the smallest singular value of subgaussian random matrices. Specifically, they obtained for all $\varepsilon \ge 0$,
\begin{align}
    \textsf{P}\left( s_{\min}(A) \le \varepsilon n^{-1/2} \right) \le C\varepsilon + e^{-cn}. \nonumber
\end{align}

In fact, their proposed geometric approach links combinatorial random matrices with asymptotic geometric analysis. In addition, they introduced the LCD as a primary tool for studying the Littlewood–Offord problem. All of these have become among the important methods in this field.

Tikhomirov obtained the sharpest result currently conjectured in \cite{Tikhomirov}, where it is proved that
\begin{align}
    \textsf{P}\left( B \text{ is singular} \right) =(1/2+o_{n}(1))^{n}.\nonumber
\end{align}

He innovatively proposed the technique of ``Inversion of randomness", thus completing the proof. In fact, Tikhomirov proved that the above conclusion also holds for random matrices whose entries are uniformly distributed on \(\{0,1\}\). This is another important model of interest, the Bernoulli random matrix. Its distinction from the Rademacher random matrix discussed above lies in the need to account for the probability of having all zero rows or columns, and the probability that two columns are linearly dependent is much smaller than the probability that a single column is zero. More precisely, if we define a Bernoulli$(p)$ random matrix $B_{p}$ as a matrix whose entries are i.i.d., taking values $0$ with probability $1-p$ and $1$ with probability $p$, then a natural conjecture is that 
\begin{align}\label{conjecture Bp}
    \textsf{P}\left( B_{p} \text{ is singular} \right)=(2+o_{n}(1))n(1-p)^{n}.
\end{align}

If $p$ is a fixed constant, Tikhomirov has already established a bound of $(1 - p + o_{n}(1))^{n}$. The key issue then is how to obtain a sharp estimate for the $o_{n}(1)$ term. Jain, Sah, and Sawhney \cite{JSS_gafa} improved Tikhomirov’s argument of ``inversion of randomness" and consequently proved that \eqref{conjecture Bp} holds when $p <1/2$ is a fixed constant.

When $p = o_{n}(1)$, the singularity probability remains an important and interesting problem. Note that a Bernoulli random matrix is the adjacency matrix of a directed graph, so this regime is closely related to sparse graph models. However, since most previous work focused on the case where $p$ is a fixed constant, there is a lack of suitable analytical tools for this setting. The main difficulty arises from the fact that when $p$ becomes too small, the traditional $\varepsilon$-net approach does not work for sparse vectors.

In this setting, Basak and Rudelson \cite{BR_adv} established precise bounds on the smallest singular value of $B_{p}$, showing that for $pn$ above a logarithmic threshold, the matrix remains invertible with high probability. In particular, there exists $C \ge 2$ such that for all $C\log n \le pn \le C^{-1}n$ and $\varepsilon \ge 0$:
\begin{align*}
    \textsf{P}\left( s_{\min}(B_{p}) \le \varepsilon \exp\left( -C\log(1/p)/\log(np) \right) \sqrt{p/n} \right) \le C\varepsilon+e^{-cpn}.
\end{align*}

Of course, their proof relies on the LCD and follows the classical framework in \cite{Rudelson_adv}, yielding an exponential upper bound. To obtain sharper estimates, Litvak and Tikhomirov \cite{LT_duke} introduced the ``U-degree" and incorporated techniques from the study of $d$-regular random matrices in the analysis of sparse Bernoulli random matrices. As a result, they showed that \eqref{conjecture Bp} holds in the regime $C \log n/n \le p \le C^{-1}$. For works on $d$-regular random matrices, the readers may consult \cite{LLTTY_ptrf}. 

The final range is $C \log n / n \ge p \ge \log n / n$. The difficulty here lies in the fact that Litvak and Tikhomirov’s treatment of “sparse vectors’’ breaks down in this regime. By modifying the analysis in this part, Huang \cite{Huang_rank} ultimately completed the proof for the remaining range.

Furthermore, a natural extension is to consider random matrices with general discrete distributions. Instead of being restricted to $\{0,1\}$ or $\{\pm 1\}$, the entries can take values from a finite set with arbitrary probabilities. For discrete random matrices, since we cannot directly determine the relative magnitudes of the probabilities that a column is zero and that two columns are linearly dependent, a natural conjecture is that the singularity probability should be jointly governed by these two events. Another viewpoint on singularity probability was given by Bourgain, Wood, and Vu \cite{Bour_jfa}. Let $\bar{p}$ denote the maximum probability of atoms of the underlying discrete random variable. They proved an upper bound of $(\sqrt{\bar p} + o(1))^{n}$ for the singularity probability of discrete random matrices, and further conjectured that the correct upper bound should be $(\bar{p}+o(1))^{n}$.

In the case of Bernoulli random matrices, this bound is essentially optimal, as $\bar p = 1 - p$. For more general distributions, it is not optimal. For example, when the random variable is equal to $1$ with probability $1-p$ and $-1$ with probability $p$—the probability that two columns of the random matrix are linearly dependent is $\left((1-p)^{2} + p^{2} + o(1)\right)^{n}$, which is clearly much smaller than $(1 - p + o(1))^{n}$. In fact, we may formulate the following conjecture:
\begin{mycon}\label{Conjecture A}
    Let $x$ be a discrete random variable and let $x'$ be an independent copy of $x$. Let $A_{n}(x)$ be an $n\times n$ random matrix with entries independent and identically distributed according to $x$. Then  
\begin{align*}
      \textsf{P}\left( A_{n}(x) \text{ is singular}  \right) & =(2+o_{n}(1))\textsf{P}\left( x=0 \right)^{n} \nonumber \\ 
    &\quad +\left( 1+o_{n}(1) \right)n(n-1)\left( \textsf{P}\left( x'=x \right)^{n}+\textsf{P}\left( x'=-x \right)^{n}  \right). 
\end{align*}
\end{mycon}

More recently, Jain, Sah and Sawhney \cite{JSS_gafa} resolved the conjecture in the case where the distribution is not uniform in support and $\sup_{r \in \mathbb{R}}\textsf{P}\left( x=r \right)$ is a fix constant. Their work confirmed that the singularity probability is indeed governed by the most likely causes of degeneracy, such as zero rows/columns or linearly dependent pairs.

The natural question that arises is whether Conjecture \ref{Conjecture A} remains valid when
$\sup_{r \in \mathbb{R}} \mathsf{P}(x = r) = 1 - o(1)$. This is also the main focus of the present paper: we prove that under this condition, Conjecture \ref{Conjecture A} indeed holds. Before presenting our main theorem, we introduce the following assumptions. Without loss of generality, set 
\begin{align}\label{Standardize for discrete r.v.}
    \eta = \delta \xi + b, 
\end{align}
where $b \in \mathbb{R}$, $\delta$ is a Bernoulli($p$) random variable, and $\xi$ is a discrete random variable with support
\begin{align}\label{Condition}
   S_{\xi}:=\{ a_{1},\dots,a_{L}  \}, \quad |a_{i}| \le 1, \quad \textsf{P}\!\left( \xi=a_{i} \right)=p_{i}, \quad p_{1}+\dots+p_{L}=1,
\end{align}
so that $S_{\xi}$ denotes the support set of $\xi$.

We are now in a position to state the main theorem of this paper, which settles the invertibility problem for random matrices with highly skewed discrete entries.

\begin{mytheo}\label{Theorem A}
Let $\eta$ be a discrete random variable and set 
\[
p = 1-\sup_{r \in \mathbb{R}} \textsf{P}(\eta = r).
\] 
Suppose $\eta$ is standardized as in \eqref{Standardize for discrete r.v.}. Then there exist constants $C_{\ref{Theorem A}}>1$ and $n_{\ref{Theorem A}} \in \mathbb{N}$, depending only on $\xi$, such that the following holds. For all $n \ge n_{\ref{Theorem A}}$ and
\[
\frac{C_{\ref{Theorem A}}\log n}{n} \le p \le C_{\ref{Theorem A}}^{-1},
\]
let $M_{n}$ be an $n \times n$ random matrix whose entries are i.i.d. copies of $\eta$. Then
\begin{align}
    \textsf{P}\left( M_{n} \text{ is singular} \right) 
    &= (2+o_{n}(1))n\textsf{P}\left( \eta =0 \right)^{n} \nonumber \\
    &\quad +(1+o_{n}(1))n(n-1)\textsf{P}\left( \eta'=\eta \right)^{n}, \nonumber
\end{align}
where $\eta'$ is an independent copy of $\eta$.

Moreover, for all $t>0$, we have
\begin{align}
    \textsf{P}\!\left( s_{\min}(M_{n}) \le t\exp(-3\log^{2}(2n)) \right) 
    \le t + \textsf{P}\!\left( M_{n} \text{ is singular} \right). \nonumber
\end{align}

\end{mytheo}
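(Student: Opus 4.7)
The plan is to combine the now-standard sparse random matrix machinery with the general discrete framework of Jain--Sah--Sawhney. The lower bound for the leading-order asymptotic is essentially a direct computation: the probability that $M_n$ contains a zero column or a zero row is $(2+o_n(1))n\,\textsf{P}(\eta=0)^n$, and the probability that some pair of rows or columns coincides is $(1+o_n(1))n^2\,\textsf{P}(\eta'=\eta)^n$; a routine inclusion-exclusion shows these events are disjoint at leading order. The substantive content is the matching upper bound, which I will establish through the Rudelson--Vershynin invertibility-via-distance method, reducing $\textsf{P}(s_{\min}(M_n)\le t)$ to a uniform small-ball estimate on $\|M_n v\|$ for $v \in S^{n-1}$.

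The first reduction is the decomposition $M_n = \widetilde{M}_n + b J_n$, where $J_n$ is the all-ones matrix and $\widetilde{M}_n$ has independent entries $\delta_{ij}\xi_{ij}$. This turns the biased problem into a sparse problem in the Litvak--Tikhomirov regime $C\log n / n \le p \le C^{-1}$, with the rank-one shift $b J_n$ absorbed into the test vector via the inner product $\mathbf{1}^\top v$. I then partition the sphere $S^{n-1}$ into a \emph{trivial} part (vectors close to $\pm e_i$ or $(e_i \pm e_j)/\sqrt{2}$), a \emph{compressible} part (approximately sparse vectors outside the trivial class), and an \emph{incompressible} part of spread vectors. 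A standard net plus small-ball argument, via the tensorization inequality applied to the coordinates of $\widetilde{M}_n v$ with the atom $\textsf{P}(\eta = b) = 1-p$, shows that the contribution of the compressible part to the small-ball event is at most $e^{-cpn}$, which is $o(n^{-K})$ for any fixed $K$ once the constant in $p \ge C\log n / n$ is taken large enough.

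The core of the proof is the incompressible regime. I will import a sparse least-common-denominator (LCD) formalism in the spirit of Basak--Rudelson together with Tikhomirov's ``inversion of randomness'' net bound. Structured vectors, i.e.\ those with small sparse LCD, form an exceptional set of small entropy, and a union bound against the crude small-ball estimate $\mathcal{L}(\langle X, v\rangle, t) \le C(t + e^{-cpn})$, where $X$ is a row of $M_n$ and $\mathcal{L}$ is the L\'evy concentration function, is sufficient on this set. Generic incompressible vectors with large sparse LCD admit a sharper small-ball estimate in which the scale $\exp(-3\log^2(2n))$ appearing in the theorem arises directly from the LCD threshold. To upgrade the resulting exponential upper bound to the sharp constants $2n\,\textsf{P}(\eta=0)^n + n^2\,\textsf{P}(\eta'=\eta)^n$, I will follow the Jain--Sah--Sawhney template: condition on an almost-zero column (respectively an almost-equal pair of columns) and use a deterministic rank argument to verify that near-trivial directions asymptotically force the corresponding exact trivial event.

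I expect the principal obstacle to be calibrating the sparse LCD threshold and the inverse Littlewood--Offord step to the biased distribution. The dominant atom at $b$ produces very large trivial coincidence probabilities, and one must carefully distinguish these from genuinely small-LCD directions; this will require a sharp anti-concentration bound of the form $\mathcal{L}(\xi - \xi', 0) \le 1 - c_\xi$ that tensorizes through the Bernoulli-$p$ thinning to yield the desired bound on $\mathcal{L}(\sum_i \delta_i \xi_i v_i, t)$. This is precisely where the standardization hypothesis (that $\xi$ is a genuine nontrivial discrete variable with $|a_i|\le 1$) is used, and where the lower threshold $p \ge C\log n / n$ becomes essential, since the exponential factor $e^{-cpn}$ must beat the trivial leading term $n^{2}\textsf{P}(\eta'=\eta)^{n}$ throughout the union bound.
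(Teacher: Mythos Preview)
Your high-level architecture (sphere partition, invertibility-via-distance, small-ball via a structured/unstructured dichotomy) matches the paper's, but the implementation diverges at two load-bearing points, and the second is a genuine gap.

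For the unstructured part the paper does not use LCD. It extends Litvak--Tikhomirov's \emph{Randomized U-degree} $\mathrm{UD}_n^\xi$ to general discrete $\xi$ (Section~\ref{Unstructured vectors section}), proving an anti-concentration-of-the-anti-concentration statement on a lattice approximation $\Lambda_n$ of the gradual-nonconstant set $\mathcal{V}_n$. The Esseen-based $\mathrm{UD}_n^\xi$ is what produces the scale $\exp(-3\log^2(2n))$ in the smallest-singular-value bound; a sparse-LCD calibration might reach the same threshold, but this would require separate work and is not what the paper does. Separately, the paper handles the bias $b$ not by absorbing the rank-one shift $bJ_n$ into $\mathbf{1}^\top v$ but by the symmetrization $\|(M_1-M_2)x\|$ (top half minus bottom half of $M$), precisely because the shift forces $\|M\|\sim\sqrt{n}$ rather than $\sqrt{pn}$ and breaks the usual net arithmetic.

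The real gap is in how you propose to recover the sharp constants. In the sparse regime the target $P^s$ is of order $n(1-p)^n\approx n e^{-pn}$ (when $b=0$) or $n^2(1-p)^{2n}\approx n^2 e^{-2pn}$ (when $b\ne0$), so a compressible-vector bound of the form $e^{-cpn}$ is of the \emph{same} order as $P^s$, not negligible; you cannot discard the compressible class and then condition \`a la JSS afterwards. The paper's innovation (Proposition~\ref{T0 and T1}) is a direct deterministic argument on the two extreme steep classes $\mathcal{T}_0=\{x_1^*\ge C_1 n\,x_2^*\}$ and $\mathcal{T}_{11}=\{x_2^*\ge C_2 n\,x_{n_1}^*\}$: on $\mathcal{T}_0$, $\|Mx\|$ is bounded below unless $M$ has a zero column; on $\mathcal{T}_{11}$, $\|Mx\|$ is bounded below unless two columns of $M$ satisfy a specific coincidence structure (the event $\mathcal{E}_1$) whose probability is computed to be exactly $(1+o_n(1))\binom{n}{2}\textsf{P}(\eta'=\eta)^n$. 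All remaining steep and spread classes ($\mathcal{T}_{1j}$ for $j\ge2$, $\mathcal{T}_2$, $\mathcal{T}_3$, $\mathcal{R}$) are dispatched with probability at most $e^{-2pn}=o_n(P^s)$ using the symmetrization trick and a combinatorial row-pair lemma (Lemma~\ref{combinatorial lemma}). Your proposal does not isolate this steep-vector refinement, and without it the sharp constants cannot be extracted from a generic trivial/compressible/incompressible split.
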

\begin{myrem}
Combining Theorem~\ref{Theorem A} with the work of Jain, Sah, and Sawhney \cite{JSS_gafa}, 
the conjecture on the singularity probability of discrete random matrices is now completely resolved for $x$ is a bounded discrete distribution (with bound independent of $n$) and the mass concentrated at a single point lies in the regime
\[
\frac{1}{2}> 1-\sup_{r\in \mathbb{R}}\textsf{P}(x=r) > \frac{C\log n}{n}.
\]

Furthermore, the asymptotic bound in Theorem~\ref{Theorem A} coincides exactly with the probability of the most likely singularity of $M_{n}$: 
the first term arises from the presence of a zero row or column, and the second term from the event that two rows or two columns are equal or opposite. 
Hence, the singularity probability is completely explained by these natural events.

\end{myrem}

We now briefly describe the main difficulties of the problem and our approach to resolving them. The first major obstacle is that the existing partition of the vector space breaks down when attempting to obtain the probability bounds we need. This requires us to redesign the partition of the vector space and to establish new probability estimates for this refined decomposition. 

The second major difficulty is that the notion of ``U-degree" is effective only for Bernoulli random variables. To address more general discrete distributions, we introduce a broader characterization, which we call the ``Randomized U-degree", to capture their structural properties. More specifically, we provide a method that, on the one hand, estimates the anticoncentration inequalities for discrete random variables, and on the other hand, establishes that the RUD enjoys desirable properties previously possessed by notions such as the LCD and the U-degree. This indicates that the RUD should have broader applications in the study of sparse random matrices.

\textbf{Organization of this paper}
In Section \ref{Notation and Proof sketch}, we present the notation and outline of the proof used in this paper. Section \ref{Preliminaries} provides the preliminary knowledge. Then, in Sections \ref{Unstructured vectors section} and \ref{Structured vectors section}, we analyze the probability of the existence of two types of vector in linear spaces, respectively. Finally, in Section \ref{Proof of main result section}, we establish the proofs of the two main theorems of this paper. 
\section{Notation and Proof sketch}\label{Notation and Proof sketch}
\subsection{Notation}
We denote by $[n]$ the set of natural numbers from $1$ to $n$. Given a vector $x \in \mathbb{R}^{n}$, we denote by $\Vert x\Vert_{2}$ its standard Euclidean norm: $\Vert x\Vert_{2}=\left(\sum_{j\in [n]}{x_{j}^{2}} \right)^{\frac{1}{2}}$, the supnorm is denoted $\Vert x\Vert_{\infty}=\max_{i}{|x_{i}|}$. Fixing $\textbf{1}_{n}:=\left(1,\dots,1  \right) \in \mathbb{R}^{n}$ and $\textbf{e}:=\frac{1}{\sqrt{n}}\textbf{1}_{n}$. Let $P_{\textbf{e}}$ be the projection on $\textbf{e}^{\perp}$, and let $P_{\textbf{e}^{\perp}}$ be the projection on $\textbf{e}$. Furthermore, consider the norm on $\mathbb{R}^{n}$ defined by 
\begin{align}
    \Vert x\Vert_{\textbf{e}}^{2}=\Vert P_{\textbf{e}}x\Vert_{2}^{2}+pn\Vert P_{\textbf{e}^{\perp}}x\Vert_{2}^{2}.\nonumber
\end{align}

For the set $I \subset [n]$, we define $x_{I}$ to be the vector composed of all the coordinates of $x$ whose indices belong to $I$. Let $x^{*}$ denote a nonincreasing rearrangement of the absolute values of the components of a vector $x$ and the permutation $\sigma_{x}$ satisfying $|x_{\sigma_{x}(j)}|=x^{*}_{j}$.

The unit sphere of $\mathbb{R}^{n}$ is denoted by $S^{n-1}$. The cardinality of a finite set $\mathrm{I}$ is denoted by $\left| \mathrm{I} \right|$. Define the L\'{e}vy function of a random vector $\xi \in \mathbb{R}^{n}$ and $t>0 $ as 
\begin{align}
    \mathcal{L}\left( \xi,t \right):=\sup_{w \in \mathbb{R}^{n}}{\textsf{P}\left( \Vert \xi -w \Vert_{2} \le t \right)}.\nonumber
\end{align}

Let $H$ be an $m \times n$ matrix, define $\mathrm{R}_{j}(H)$($\mathrm{C}_{j}(H)$) as the $j$-th rows(columns) of $H$. Define the spectral norm of $H$ by $\Vert H\Vert := \sup_{\Vert x\Vert_{2}=1}{\Vert Hx\Vert_{2}}$, the least singular value of $H$ by $s_{\min}(H)$, and ``Hilbert-Schmidt'' norm by $\Vert H\Vert_{\mathrm{HS}}:=\left( \sum_{i,j}{h_{ij}^{2}} \right)^{1/2}$.

In this paper, we define $c$, $c',\dots$ as some fixed constant and define $c\left( u\right)$, $C\left( u\right)$ as a constant related to $u$, and they depend only on the parameter $u$. Their value can change from line to line.
\subsection{Proof sketch}
In this subsection, we present an overview of the proof and our principal innovations. First, we adopt standard approaches in the field, such as sphere decomposition, net arguments, and anticoncentration inequality. For the lower bound, earlier work \cite{LT_duke, JSS_gafa} shows that the singularity probability is at least the RHS of Theorem \ref{Theorem A}.

For the upper bound, define $\textsf{P}_{singular}$ as 
\begin{align}\label{Singularity probability}
    \textsf{P}_{singular}:= 2n\textsf{P}\left( \eta =0 \right)^{n} + n(n-1)\textsf{P}\left( \eta'=\eta \right)^{n}.
\end{align}

Let us begin with the decomposition of $\mathbb{R}^{n}$, following the framework of \cite{LT_duke}, we divide $\mathbb{R}^{n}$ into the unstructured part $\mathcal{V}_{n}$ and its complement. By the natural and standard approach, it is sufficient to prove that 
\begin{align}
    \textsf{P}\left( \{ M_{n}x=0 \text{ for some }  x \in \mathcal{V}_{n}\} \cap \{ M_{n}x \ne 0 \text{ for all } x \notin \mathcal{V}_{n}\}   \right)=o_{n}(\textsf{P}_{singular})\nonumber
\end{align}
and 
\begin{align}
    \textsf{P}\left(  M_{n}x=0 \text{ for some }  x \notin \mathcal{V}_{n}  \right)=(1+o_{n}(1))\textsf{P}_{singular}/2.\nonumber
\end{align}

For the complement of the unstructured vectors, our lemma in Section \ref{Decomposition of Rn} shows that it is contained in the union of ``Steep part'' $\mathcal{T}$  and ``Spread part'' $\mathcal{R}$.

Otherwise, when $b = 0$, the first term on the right-hand side of the inequality in Theorem \ref{Theorem A} dominates, and the second term is strictly of smaller order.  When $b \ne 0$, the second term becomes the dominant one.  

In either situation, for every $R \ge  2$ and any $p:=1-\sup_{r \in \mathbb{R}}\textsf{P}\left( \eta = r  \right)$ with $p \ge \frac{\log{n} }{n}$, we have
\begin{align}
    \exp(-Rpn):=o_{n}(\textsf{P}_{singular}).\nonumber
\end{align}
 
 Thus, combining the two above results, we only need to prove that for some $R \ge 2$
 \begin{align}\label{Unstructured probability}
     \textsf{P}\left( \{ M_{n}x=0 \text{ for some }  x \in \mathcal{V}_{n}\} \cap \{ M_{n}x \ne 0 \text{ for all } x \notin \mathcal{V}_{n}\}   \right) \le e^{-Rpn}
 \end{align}
 \begin{align}\label{Spread probability}
     \textsf{P}\left( M_{n}x =0 \text{ for some } x \in \mathcal{R} \right) \le e^{-Rpn}
 \end{align}
 and
 \begin{align}\label{Steep probability}
     \textsf{P}\left( M_{n}x= 0 \text{ for some } x \in \mathcal{T}  \right)= (1+o_{n}(1))\textsf{P}_{singular}/2.
 \end{align}

Let us now turn to the unstructured vector component. Following earlier practice, anticoncentration must again be taken into account. One of the first, most influential, and now standard techniques for this purpose is the ``LCD'' introduced by Rudelson and Vershynin in \cite{Rudelson_adv}, which has become a cornerstone of the field; for further characterizations and applications of the LCD, the reader is referred to \cite{Campos_jams, Campos_pi, Livshyts_jam, Livshyts_aop}. 

However, an inevitable drawback of using the LCD is that it only yields exponential upper bounds and cannot provide optimal estimates. To address this, Litvak and Tikhomirov \cite{LT_duke} introduced the ``U-degree" for Bernoulli random matrices. However, the key limitation is that the U-degree is effective only for Bernoulli random variables.

To characterize general discrete random variables, we introduce the ``Randomized Unstructured Degree (RUD)" to capture anti-concentration properties. This essentially extends the idea of U-degree proposed in \cite{LT_duke}. Specifically, we use RUD to describe the “Regular Littlewood–Offord problem.” It is worth noting that a similar approach is to extend LCD to RLCD, as in \cite{Livshyts_aop}. However, the purposes of these two constructions are completely different: the main goal of RUD is to allow for a broader class of distributions. In fact, in Section \ref{Unstructured vectors section}, we can extend the properties of RUD to random variables with finite fourth moments. By contrast, RLCD is used mainly to study ``Inhomogeneous Littlewood–Offord problems" to estimate the properties of inhomogeneous random matrices.

Moreover, we prove that RUD satisfies several important properties of LCD and UD, such as lower bounds and stability. The main step in our approach is to establish anti-concentration inequalities for RUD on discrete grids, which will be presented in Section \ref{Unstructured vectors section}. Based on RUD, we then complete the preparations necessary to prove \eqref{Unstructured probability}.

Our second main innovation lies in analyzing the complement of unstructured vectors. The main obstacle is the bias term $b$ in \eqref{Standardize for discrete r.v.}, which invalidates standard concentration inequalities. In particular, the spectral norm of $M$ is typically of order $\sqrt{n}$ rather than $\sqrt{pn}$, which makes classical bounds ineffective. 

To overcome this obstacle, we control the concentration probability of $Mx$ instead of considering the coupled quantity $(M-M')x$, where $M'$ is an independent copy of $M$. 
More precisely, for all $t>0$ we have
\begin{align}\label{Coupling}
    \textsf{P}\!\left( \Vert Mx\Vert_{2} \le t \right)^{2} 
    \;\le\; \textsf{P}\!\left( \Vert (M-M')x\Vert_{2} \le 2t \right).
\end{align}
Since the spectral norm of $M-M'$ is bounded by $C\sqrt{pn}$ with high probability, 
this coupling argument allows us to establish the desired estimate in \eqref{Spread probability}.

Another critical issue is that this substitution does not hold for steep vectors in some cases. If we retain the same configuration of the Steep vectors in \cite{LT_duke}, the presence of the bias term $b$ implies that excluding the event of a zero column alone does not suffice to guarantee a lower bound for $\Vert M_{n}x\Vert_{2}$. In contrast, by adopting the preceding idea and estimating $\Vert M_{n}x\Vert_{2}$ through the inner product of $x$ with the difference of two rows, it becomes sufficient to rule out the event of a constant column. The probability of encountering a constant column is $(1+o_{n}(1))n(1-p)^{n}$. This approach is optimal only in the case $b=0$; when $b \ne 0$, however, it ceases to be optimal.

We observe that making adjustments to only a small subset of steep vectors does not affect the overall properties. Therefore, we modified the steep vectors to focus our discussion on the properties of individual columns or pairs of columns in the matrix.
First, for vectors $x$ with $x_{1}^{*} > Cn x_{2}^{*}$, we consider the inner product between the $x$ and some one column of $M_{n}$. We only need to exclude the case where $M_{n}$ contains a column consisting entirely of zeros. Furthermore, for vectors $y$ with $y_{2}^{*} > Cny_{3}^{*}$, we consider the event to be approximately two columns in $M_{n}$ linearly dependent. A natural idea would be to exclude the event where two columns are linearly dependent. However, it is in fact difficult to derive an explicit lower bound in this way. This difficulty arises because, under the preceding approach, the lower bound is determined by 
\begin{align}
    |\alpha y_{1}^{*}+\beta y_{2}^{*}|-Cny_{3}^{*}\nonumber
\end{align}
whereas linear independence only ensures that the first part is nonzero, which does not necessarily imply that the lower bound is strictly positive. Consequently, we require an alternative event that not only guarantees the lower bound but also differs only slightly in probability from the event of two columns being linearly dependent. In particular, we consider the event $\mathcal{E}_{1}$ that there exist two columns of the matrix that are approximately equal. We can obtain the lower bound by excluding this event and estimating 
\begin{align}
    \Vert My \Vert_{2} \ge \max_{i,j,k}{ \left\{ \left| \left \langle \mathrm{R}_{i}(M_{n}),y \right  \rangle  \right|,\left|  \left \langle  \mathrm{R}_{j}(M_{n})-\mathrm{R}_{k}(M_{n}),y \right \rangle \right|  \right\} }.\nonumber
\end{align}
For other parts in Steep vectors $\mathcal{T}$, we continue to use the approach from \eqref{Coupling} to estimate the lower bound of $\Vert M_{n}x\Vert_{2}$ and complete the proof of \eqref{Steep probability}.

Ultimately, we will complete the proof using a ``invertibility via distance''. This approach reduces the least singular value of the random matrix to the distance between a random vector and a random linear subspace, thus establishing its connection to the anti-concentration inequality. Related techniques have also been applied to solve other problems in nonasymptotic random matrix theory, such as \cite{Nguyen_ptrf, Nguyen_jfa, Rudelson_cpam, Rudelson_gafa, Vershynin_rsa}.

\section{Preliminaries}\label{Preliminaries}
\subsection{Decomposition of \texorpdfstring{$\mathbb{R}^{n}$}{Rn}}\label{Decomposition of Rn}
In this paper, our decomposition of $\mathbb{R}^{n}$ is analogous to the partition induced by \cite{LT_duke}, dividing the space into gradual nonconstant vectors (unstructured vectors) and the complement of that set. For any $r \in (0,1)$, we define 
\begin{align}
\Upsilon_{n}\left(r \right):=\left\{ x\in \mathbb{R}^{n} : x_{\left \lfloor rn \right \rfloor }^{*}=1   \right\}.
\end{align}
By a growth function $\textbf{g}$ we mean any nondecreasing function from $[1,\infty)$ to $[ 1, \infty)$. Let $\textbf{g}$ be a growth function, we say that a vector $x \in \Upsilon_{n}(r)$ is gradual if $x_{i}^{*} \le \textbf{g}(n/i)$ for all $i \le n$. Furthermore,  for $\delta \in (0,1)$ and $\rho >0$, we say that $x\in \Upsilon_{n}(r)$ is nonconstant if 
\begin{align}\label{nonconstant}
    \exists Q_{1} ,Q_{2} \subset [n]  \text{      such that   } \left| Q_{1} \right|,|Q_{2}| \ge \delta n \text{ and } \max_{i \in Q_{2}}x_{i} \le \min_{i \in Q_{1}}x_{i}-\rho. 
\end{align}
We now define the set $\mathcal{V}_{n}(r,\textbf{g},\delta,\rho)$ as 
\begin{align}
    \mathcal{V}_{n}:=\left\{ x \in \Upsilon_{n}(r): x \text{ is gradual with \textbf{g} and satisfies \eqref{nonconstant}}     \right\}
\end{align}
The more properties of gradual nonconstant vectors will be introduced in Section \ref{Unstructured vectors section}. We now introduce the complement of unstructured vectors. 

We first introduce our parameters. Let $d=pn$, $\gamma > 1$, $C_{1},  C_{2} > 0$, we fix a sufficiently small absolute positive constant $r$ and a sufficiently large absolute positive constant $C_{\tau}$. We also fix two integers $l_{0}$ and $s_{0} \in \mathbb{N}^{+}$ such that 
\begin{align}\label{l and s}
    l_{0}= \left\lfloor \frac{d}{4\log{1/p}} \right\rfloor \ \text{  and  } \ l_{0}^{s_{0}-1} \le (64p)^{-1} = \frac{n}{64d} < l_{0}^{s_{0}}.
\end{align}
For $1 \le j \le s_{0}$, we set 
\begin{align}
    n_{0}=2, \ \ \ n_{j}=3l_{0}^{j-1},\ \ \ n_{s_{0}+2}=\left\lfloor \sqrt{n/p} \right\rfloor \ \text{ and } \ n_{s_{0}+3}=\left\lfloor rn  \right\rfloor.
\end{align}
Then, set $n_{s_{0}+1}=\left\lfloor 1/(64p)  \right\rfloor$ if $\left\lfloor 1/(64p) \right\rfloor \ge 1.5n_{s_{0}}$. Otherwise, let $n_{s_{0}+1}=n_{s_{0}}$. 
Finally, we set $\kappa=\kappa(p) = \frac{\log{\gamma d}}{\log{l_{0}}}$.\par 
We first introduce steep vectors. Set 
\begin{align}
    \mathcal{T}_{0}:=\left\{ x \in \mathbb{R}^{n}: x_{1}^{*} \ge C_{1} nx_{2}^{*} \right\} \ \text{ and } \ \mathcal{T}_{11}:=\left\{ x \in \mathbb{R}^{n}: x\notin \mathcal{T}_{0} \text{ and } x_{2}^{*} \ge C_{2} n x_{n_{1}}^{*}  \right\}.\nonumber
\end{align}
Then, for $2 \le j \le s_{0}+1$, we set 
\begin{align}
    \mathcal{T}_{1j}=\left\{ x \in \mathbb{R}^{n}: x \notin \bigcup_{i=1}^{j-1}\mathcal{T}_{1i} \cup \mathcal{T}_{0} \text{ and } x_{n_{j-1}}^{*} \ge \gamma dx_{n_{j}}^{*}  \right\}
     \text{ and } \mathcal{T}_{1} = \bigcup_{j=1}^{s_{0}+1}\mathcal{T}_{1j}.\nonumber
\end{align}
We now set $j = j(k)= s_{0}+k$ for $k=2,3$ and define
\begin{align}
    \mathcal{T}_{k}=\left\{ x \in \mathbb{R}^{n}: x \notin \bigcup_{i=0}^{k-1}\mathcal{T}_{i} \text{ and } x_{n_{j(k)-1}}^{*} \ge C_{\tau} \sqrt{d} x_{n_{j(k)}}^{*}  \right\}.
\end{align}
The steep vectors is $\mathcal{T}= \bigcup_{i=0}^{4}\mathcal{T}_{i}$.\par 
We now introduce the ``Spread" vectors($\mathcal{R}$-vectors). Given $n_{s_{0}+1} < k \le n/\log^{2}{d}$, define $A=A(k)=[k:n]$ and the set
\begin{equation}
\begin{aligned}
    \mathcal{AC}\left( \rho \right):= \{& x\in \mathbb{R}^{n}: \exists \lambda \in \mathbb{R} \text{ such that } |\lambda|=x_{\left\lfloor rn \right\rfloor}^{*} \text{ and }\\
    & |\left\{ i \le n: |x_{i}-\lambda| \le \rho|\lambda|  \right\}| \ge n- \left\lfloor rn \right\rfloor  \}.\nonumber
\end{aligned}
\end{equation}
We now give the $\mathcal{R}$-vectors as follows:
\begin{equation}
\begin{aligned}
     \mathcal{R}_{k}^{1}: =& \bigg\{ x \in \left( \Upsilon_{n}(r) \setminus \mathcal{T} \right) \cap \mathcal{AC}(\rho): \Vert x_{\sigma_{x}(A)}\Vert_{2}/\Vert x_{\sigma_{x}(A)} \Vert_{\infty}  \ge C_{0}/\sqrt{p} \text{ and } \\
    & \sqrt{n/2} \le \Vert x_{\sigma_{x}(A)}\Vert_{2} \le C_{\tau }\sqrt{dn} \bigg\}  \nonumber
\end{aligned}
\end{equation}
and 
\begin{equation}
\begin{aligned}
     \mathcal{R}_{k}^{2}: = &  \bigg\{ x \in  \Upsilon_{n}(r) \setminus \mathcal{T} : \Vert x_{\sigma_{x}(A)}\Vert_{2}/\Vert x_{\sigma_{x}(A)} \Vert_{\infty}  \ge C_{0}/\sqrt{p} \text{ and } \\
    & \frac{2\sqrt{n}}{r} \le \Vert x_{\sigma_{x}(A)}\Vert_{2} \le C_{\tau }d\sqrt{n} \bigg\},  \nonumber
\end{aligned}
\end{equation}
where $C_{0}>0$ is fixed in Section \ref{Structured vectors section}. Define $\mathcal{R}=\bigcup_{n_{s_{0}+1}<k \le n/\log^{2}{d} }{\left( \mathcal{R}_{k}^{1} \cup \mathcal{R}_{k}^{2} \right)} $.\par 
Finally, we show that the complement of unstructured vectors belongs to $\mathcal{R} \cup \mathcal{T}$. This is the version of Theorem 6.3 in \cite{LT_duke}. We first give the growth function:
\begin{align}\label{g in structured vectors}
    \textbf{g}(t)=(2t)^{3/2} \text{ for } 1 \le t <64d \text{ and } \textbf{g}(t)=\exp{\left( \log^{2}{(2t)} \right)} \text{ for } t \ge 64d.
\end{align}
\begin{mypropo}\label{Structured vectors partition}
    For any positive constants $C_{0}$, $C_{1}$, $C_{2}$ and $\gamma$, there exist positive constants $C_{\tau}$, $C_{\ref{Structured vectors partition}}$ and $c_{\ref{Structured vectors partition}}$ depending on $C_{0}$, $C_{1}$, $C_{2}$ and $\gamma$ such that the following holds. Let $n \ge C_{\ref{Structured vectors partition}}$, $p \in (0,c_{\ref{Structured vectors partition}})$ and assume that $d = pn \ge C_{\ref{Structured vectors partition}} \log{n}$. Let $r \in (0,1/2)$, $\delta \in (0,r/3)$, $\rho \in (0,1)$ and let $\textbf{g}$ satisfies \eqref{g in structured vectors}. Then,
    \begin{align}
        \Upsilon_{n}(r) \setminus \mathcal{V}_{n}(r,\textbf{g},\delta,\rho) \subset \mathcal{R}\cup \mathcal{T}.\nonumber
    \end{align}
\end{mypropo}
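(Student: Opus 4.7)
The plan is to adapt the argument for Theorem~6.3 in \cite{LT_duke} to the parameter regime here. Fix $x \in \Upsilon_n(r) \setminus \mathcal{V}_n(r,\mathbf{g},\delta,\rho)$ and assume $x \notin \mathcal{T}$; the goal is to show $x \in \mathcal{R}$. Since $x \notin \mathcal{V}_n$, one of two obstructions must hold: either (i) $x$ is not gradual, i.e.\ there is some index $k$ with $x_k^* > \mathbf{g}(n/k)$, or (ii) $x$ violates the non-constant condition \eqref{nonconstant}.

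The key structural consequence of $x \notin \mathcal{T}$ is that the consecutive jumps of $x^*$ along the scales $n_0 < n_1 < \cdots < n_{s_0+3} = \lfloor rn \rfloor$ are all bounded: by $C_1 n$ at the top via $\mathcal{T}_0$, by $C_2 n$ via $\mathcal{T}_{11}$, by $\gamma d$ via $\mathcal{T}_{1j}$ for $2 \le j \le s_0+1$, and by $C_\tau \sqrt{d}$ via $\mathcal{T}_2$ and $\mathcal{T}_3$. I would telescope these ratios against the normalization $x_{\lfloor rn \rfloor}^* = 1$ to produce matching two-sided control of $x_{n_j}^*$ scale by scale, then verify that the telescoped upper bound fits under $\mathbf{g}(n/n_j)$. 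The specific choice $l_0 \asymp d/\log(1/p)$ in \eqref{l and s} is precisely what makes the telescoped bound of order $(\gamma d)^{s_0+1-j}$ compatible with $(2n/n_j)^{3/2}$ in the first regime of $\mathbf{g}$, and the two extra $C_\tau\sqrt{d}$ jumps at scales $n_{s_0+2}$ and $n_{s_0+3}$ absorb the transition into the $\exp(\log^2(2t))$ regime starting at $t = 64d$.

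In case (i), I locate the smallest index $k$ with $x_k^* > \mathbf{g}(n/k)$ and take $A = [k:n]$. The bounded-jump property forces the tail $x_{\sigma_x(A)}$ to be spread: $\|x_{\sigma_x(A)}\|_\infty = x_k^*$ while $\|x_{\sigma_x(A)}\|_2 \gtrsim \sqrt{n}\cdot x_{\lfloor rn \rfloor}^* = \sqrt{n}$, and since $k > n_{s_0+1}$ one verifies $x_k^* \lesssim \sqrt{p}\cdot\sqrt{n}/C_0$, yielding $\|x_{\sigma_x(A)}\|_2/\|x_{\sigma_x(A)}\|_\infty \ge C_0/\sqrt{p}$. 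The matching upper bound on $\|x_{\sigma_x(A)}\|_2$ (either $C_\tau\sqrt{dn}$ or $C_\tau d\sqrt{n}$) then distinguishes $\mathcal{R}_k^1$ from $\mathcal{R}_k^2$, depending on whether $k$ lies in the polynomial branch or the $\exp(\log^2)$ branch of $\mathbf{g}$. In case (ii), the failure of \eqref{nonconstant} for every pair $Q_1,Q_2$ of density $\delta$ means that all but at most $\lfloor rn \rfloor$ coordinates of $x$ lie within $\rho$ of some value $\lambda$ with $|\lambda| = x_{\lfloor rn\rfloor}^*$; this is exactly the defining condition of $\mathcal{AC}(\rho)$. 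Combined with $x \notin \mathcal{T}$, the same norm comparisons as in case (i) place $x$ in $\mathcal{R}_k^1$ for an appropriate $k$.

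The main technical obstacle, as I see it, is the fine calibration of $C_\tau$ in terms of $C_0, C_1, C_2, \gamma$, so that the telescoped estimates simultaneously fit under $\mathbf{g}$ in the polynomial regime and remain consistent with the norm upper bounds defining $\mathcal{R}_k^1$ and $\mathcal{R}_k^2$ in the $\exp(\log^2)$ regime at the boundary $k \sim \sqrt{n/p}$. A secondary bookkeeping headache is the case split in the definition of $n_{s_0+1}$ in \eqref{l and s}, which requires verifying that the telescoping argument runs identically whether $n_{s_0+1} = \lfloor 1/(64p) \rfloor$ or $n_{s_0+1} = n_{s_0}$.
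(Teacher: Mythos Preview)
Your proposal is correct and follows essentially the same route as the paper. Both arguments adapt Theorem~6.3 of \cite{LT_duke}: the paper's proof is terser and only writes out the portion that differs from \cite{LT_duke}, namely verifying that the modified sets $\mathcal{T}_0$ and $\mathcal{T}_{11}$ (with thresholds $C_1 n$ and $C_2 n$) still force a contradiction when the non-graduality index $j$ equals $1$ or $2$, via the telescoped bound $x_2^* \le C_\tau^2 d(\gamma d)^{s_0} C_2 n < \exp(\log^2 n)$; your sketch instead outlines the full \cite{LT_duke} argument, including the case split (gradual failure versus non-constant failure) and the placement into $\mathcal{R}_k^1$ or $\mathcal{R}_k^2$.
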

\begin{proof}
    It is sufficient to prove $\Upsilon_{n}(r)\setminus (\mathcal{V}_{n}(r,\textbf{g},\delta,\rho) \cup \mathcal{R} \cup \mathcal{T} ) = \emptyset$. Following the proof of Theorem 6.3 in \cite{LT_duke} and noting that our main differences lie in \(\mathcal{T}_0\) and \(\mathcal{T}_{11}\), we conclude that for every \(x \in \Upsilon_{n}(r)\setminus (\mathcal{V}_{n}(r,\textbf{g},\delta,\rho) \cup \mathcal{R} \cup \mathcal{T} )\) there exists a smallest index \(j \in \{ 1,2 \}\) such that \(x_j^{*} \ge \exp{( \log^{2}{(2n/j)} ) } \).

    If $j=2$, we have 
    \begin{align}
        C_{\tau}^{2}d(\gamma d)^{s_{0}}C_{2}n \ge x_{2}^{*} \ge \exp{(\log^{2}{n})}.\nonumber
    \end{align}
    As a simple estimation from the proof of Theorem 6.3 in \cite{LT_duke}, we have 
    \begin{align}
        4\log{d}+ \frac{\log{\gamma d}}{\log{l_{0}}}\log{1/(64p)} \ge \log{ \left( (C_{\tau}^{2}d)(\gamma d)^{s_{0}+1} \right) }.\nonumber
    \end{align}
    We imply above inequality to obtain
    \begin{align}
        \log{\frac{C_{1}n}{\gamma d}}+ 4\log{d}+ \frac{\log{\gamma d}}{\log{l_{0}}}\log{1/(64p)} \ge \log^{2}{n},\nonumber
    \end{align}
    which is impossible.

    If $j=1$, the same argument as above shows that this cannot occur; hence the result is proved.    
\end{proof}
\subsection{Concentration properties for sparse random variables and matrix}\label{Concentration for sparse r.v.}
In this subsection, we will introduce some properties of the sparse random variables and the matrix. We begin with the simple concentration inequality.
\begin{mylem}\label{pn/8-8pn}
    Let $n \ge 1$, $50/n <p <0.1$, $\tau >e$ and $\{ \delta_{i}\}_{ i \le n}$ be Bernoulli($p$), we have
    \begin{align}
        \textsf{P}\left( \sum_{i=1}^{n}{\delta_{i}} >(\tau +1)pn \right) \le \exp{\left( -\tau\log{\left( \tau/e \right)}pn \right)}.\nonumber
    \end{align}
    Furthermore, we have
    \begin{align}
        \textsf{P}\left( pn/8 \le \sum_{i=1}^{n}{\delta_{i}} \le 8pn \right) \ge 1-\left( 1-p \right)^{n/2}.\nonumber
    \end{align}
\end{mylem}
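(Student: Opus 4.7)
The plan is to prove both statements by the standard Chernoff (exponential Markov) technique applied to $S = \sum_{i=1}^{n} \delta_{i}$. Writing $\mathbb{E}[e^{\lambda \delta_{i}}] = 1-p+pe^{\lambda} \le \exp(p(e^{\lambda}-1))$ and using independence, one gets $\mathbb{E}[e^{\lambda S}] \le \exp(pn(e^{\lambda}-1))$, so Markov's inequality yields, for any $\lambda, t > 0$,
\begin{align*}
\textsf{P}(S \ge t) \le \exp\bigl(-\lambda t + pn(e^{\lambda}-1)\bigr).
\end{align*}
Optimizing at $e^{\lambda} = t/(pn)$ produces the Poisson-type bound $\textsf{P}(S \ge t) \le \exp\bigl(-t \log(t/(pn)) + t - pn\bigr)$.

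For the first inequality I substitute $t = (\tau+1)pn$ into this Poisson bound. The exponent becomes $pn\bigl(\tau - (\tau+1)\log(\tau+1)\bigr)$, and since $x \mapsto x \log x$ is increasing on $[1,\infty)$ and $\tau > e > 1$, one has $(\tau+1)\log(\tau+1) \ge \tau \log \tau$. Hence the exponent is at most $pn(\tau - \tau \log \tau) = -\tau \log(\tau/e)\, pn$, which is exactly the stated bound.

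For the second inequality I split the bad event into $\{S > 8pn\}$ and $\{S < pn/8\}$. The upper tail is controlled by the first part with $\tau = 7$, giving a bound of $\exp(-7(\log 7 - 1)\,pn)$, which is negligible compared with the lower tail. For the lower tail, apply Markov's inequality to $e^{-\lambda S}$ with $\lambda > 0$; the dual optimization at $e^{-\lambda} = t/(pn)$ yields $\textsf{P}(S \le t) \le \exp\bigl(t \log(pn/t) + t - pn\bigr)$. Setting $t = pn/8$ gives $\textsf{P}(S \le pn/8) \le \exp\bigl(-pn\,(7-\log 8)/8\bigr) \le \exp(-0.61\, pn)$.

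The two tails together contribute at most $2\exp(-0.61\, pn)$. To close the argument I compare this with $(1-p)^{n/2}$: the elementary estimate $\log(1-p) \ge -p - p^{2}/(2(1-p))$ for $p \in (0,0.1)$ gives $(1-p)^{n/2} \ge \exp(-0.53\, pn)$, so the desired bound reduces to $2 \le \exp((0.61-0.53)\,pn)$, which holds comfortably under the assumption $pn > 50$. The only real obstacle is the bookkeeping: the sharper multiplicative Chernoff form (rather than a Hoeffding-type bound) is needed to obtain an exponent strictly larger than $1/2$ at the lower tail, and the numerical threshold $p > 50/n$ is exactly what absorbs the additive $\log 2$.
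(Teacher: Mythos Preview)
Your proof is correct. The paper states this lemma without proof (it is a standard Chernoff-type concentration inequality), so there is nothing to compare against; your argument via the exponential Markov inequality, with the Poisson-type optimization $e^{\lambda}=t/(pn)$ for both tails and the numerical comparison $2e^{-0.61pn}\le (1-p)^{n/2}$ using $pn>50$, is exactly the intended standard route.
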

We now give the following lemma to estimate the tail probability of the independent sum.
\begin{mylem}\label{Sum concentration}
    For $n \ge 30$, $6\log{n}/n \le p \le 1/2$. Let $M$ be an $n \times n$ random matrix satisfying the assumption of Theorem \ref{Theorem A}. There exists $C_{\ref{Sum concentration}}=C_{\ref{Sum concentration}}(S_{\xi})>0$ such that the following holds. Let 
    \begin{align}
        \mathcal{E}_{\text{sum}}:=\left\{ M= (\delta_{ij}\xi_{ij}+b): \forall i,j \in [n], \left| \mathrm{C}_{i}(M)-\mathrm{C}_{j}(M) \right| \le C_{\ref{Sum concentration}}pn \right\}.\nonumber
    \end{align}
    We have $\textsf{P}\left( \mathcal{E}_{\text{sum}} \right) \ge 1- \exp{(-2.7pn)}$.
\end{mylem}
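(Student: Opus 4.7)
The plan is to decompose $\mathcal{E}_{\text{sum}}^c$ as a union over pairs $1 \le i < j \le n$ of the events $\mathcal{F}_{ij} := \{|\mathrm{C}_i(M) - \mathrm{C}_j(M)| > C_{\ref{Sum concentration}} pn\}$, control each pair with a Chernoff-type bound, and finish via a union bound using the hypothesis $pn \ge 6\log n$. For a fixed pair $(i,j)$, the key observation is that the $k$-th entry of $\mathrm{C}_i(M) - \mathrm{C}_j(M)$ is $M_{ki} - M_{kj} = \delta_{ki}\xi_{ki} - \delta_{kj}\xi_{kj}$, so the bias $b$ cancels; by \eqref{Condition} this entry lies in $[-2,2]$, and it vanishes on the event $\{\delta_{ki} + \delta_{kj} = 0\}$. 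Consequently, under any of the natural readings of $|\cdot|$ on column differences (e.g.\ $\ell_1$-norm, Hamming weight, or absolute column-sum difference),
\begin{align*}
|\mathrm{C}_i(M) - \mathrm{C}_j(M)| \le 2\, N_{ij}, \qquad N_{ij} := |\{k \in [n]: \delta_{ki} + \delta_{kj} \ge 1\}|,
\end{align*}
and $N_{ij}$ is a sum of $n$ i.i.d.\ Bernoulli$(1-(1-p)^2)$ random variables with success rate at most $2p$.

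I would then apply Lemma \ref{pn/8-8pn} with $p$ replaced by the effective rate $1-(1-p)^2 \le 2p$ and with $\tau$ chosen large enough that $\tau\log(\tau/e) \ge 5$, to obtain
\begin{align*}
\textsf{P}(N_{ij} > C'\, pn) \le \exp(-10\, pn)
\end{align*}
for an absolute constant $C' > 0$. When $p$ is large enough that Lemma \ref{pn/8-8pn} does not literally apply, $pn = \Omega(n)$ and the same estimate follows from a direct Chernoff bound on $N_{ij}$. Setting $C_{\ref{Sum concentration}} := 2C'$ then gives $\textsf{P}(\mathcal{F}_{ij}) \le \exp(-10\, pn)$ for each pair.

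Finally, I would combine this per-pair bound with a union bound over the $\binom{n}{2} \le n^2/2$ pairs, using $pn \ge 6\log n$ (so that $2\log n \le pn/3$), to conclude
\begin{align*}
\textsf{P}(\mathcal{E}_{\text{sum}}^c) \le \tfrac{n^2}{2}\exp(-10\, pn) \le \exp(2\log n - 10\, pn) \le \exp(-2.7\, pn),
\end{align*}
as required.

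The argument is essentially routine Chernoff concentration followed by a union bound, so I do not expect any substantive obstacle. The only point requiring a bit of care is that the Chernoff parameter $\tau$ must be chosen large enough that the per-pair bound can absorb the factor $n^2$ from the union bound while still leaving a reserve of at least $\exp(-2.7 pn)$; the hypothesis $pn \ge 6\log n$ makes this arrangement straightforward and also justifies treating $2\log n$ as a lower-order term throughout.
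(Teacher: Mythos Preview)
Your proposal is correct and follows essentially the same route as the paper: both observe that the bias $b$ cancels in $\mathrm{C}_i(M)-\mathrm{C}_j(M)$, bound the resulting column difference by a constant times a Bernoulli sum (the paper uses $\sum_k(\delta_{ki}+\delta_{kj})$ directly rather than your $N_{ij}$), apply the Chernoff-type tail of Lemma~\ref{pn/8-8pn} to get an $\exp(-cpn)$ bound per pair, and finish with a union bound over $O(n^2)$ pairs using $pn\ge 6\log n$. The only cosmetic differences are your choice of a larger $\tau$ (yielding $\exp(-10pn)$ per pair versus the paper's $\exp(-3pn)$) and your use of $N_{ij}$ in place of the raw Bernoulli sum.
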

\begin{proof}
    For fixed $k_{1}$ and $k_{2}$, we denote 
    \begin{align}
        \Omega_{k_{1},k_{2}}:= \left\{ \sum_{j=1}^{n}(\delta_{k_{1}j}+ \delta_{k_{2}j}) > 10pn \right\}.\nonumber
    \end{align}
    Since Lemma \ref{pn/8-8pn}, we have
    \begin{align}
        \textsf{P}\left( \Omega_{k_{1},k_{2}} \right) \le \exp{(-8\log{(4/e)}pn)} <\exp{(-3pn)}.\nonumber
    \end{align}
    For $C=10\max_{i \le L}|a_{i}|$, we have
    \begin{equation}
        \begin{aligned}
            \textsf{P}\left( \left| \mathrm{C}_{k_{1}}(M)-\mathrm{C}_{k_{2}}(M) \right| \ge  Cpn  \right)
            & \le  \textsf{P}\left( \sum_{j=1}^{n}{(\delta_{k_{1}j}|\xi_{k_{1}j}|+\delta_{k_{2}j}|\xi_{k_{2}j}|)} \ge Cpn  \right)\\
            & \le \textsf{P}\left( \max_{i\le L}|a_{i}|\sum_{j=1}^{n}(\delta_{k_{1}j}+\delta_{k_{2}j}) \ge Cpn  \right)\\
            &  \le e^{-3pn}.  \nonumber
        \end{aligned}
    \end{equation}
    Finally, we obtain
    \begin{align}
        \textsf{P}\left( \mathcal{E}_{\mathrm{sum}}^{c}  \right) \le n^{2}\exp{(-3pn)} \le \exp{(-2.7pn)},\nonumber
    \end{align}
    which implies the result.
\end{proof}
We also need the following simple lemma from Litvak and Tikhominov in \cite{LT_duke}
\begin{mylem}\label{all columns concentration}
For any $ R \ge 1 $, there is $ C_{\ref{all columns concentration}} = C_{\ref{all columns concentration}}(R) \ge 1 $ such that the following holds. Let $ n \ge 1$ and $ p \in (0,1) $ satisfy $ C_{\ref{all columns concentration}}p \le 1 $ and $ C_{\ref{all columns concentration}} \le pn $. Furthermore, let $ M $ be an $ n \times n $  random matrix with i.i.d. Bernoulli(p) entries. Then with probability at least $ 1 - \exp(-n/C_{\ref{all columns concentration}}) $ one has
\begin{align}
8pn \ge |\mathrm{supp} C_i(M)| \ge pn/8 \text{ for all but } \lfloor(pR)^{-1}\rfloor \text{ indices } i \in [n] \setminus \{1\}.\nonumber
\end{align}
\end{mylem}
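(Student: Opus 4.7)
The plan is to reduce the statement to a single-column concentration estimate followed by a Bernoulli union bound, exploiting the independence of the columns of $M$. For each $i \in [n]$, I would set $B_i := \{|\mathrm{supp}\, \mathrm{C}_i(M)| \notin [pn/8, 8pn]\}$; since the entries of $\mathrm{C}_i(M)$ are i.i.d.\ Bernoulli($p$), Lemma \ref{pn/8-8pn} yields $\textsf{P}(B_i) \le (1-p)^{n/2} \le \exp(-pn/2) =: q$. Because the columns are i.i.d., the indicators $\{\mathbf{1}_{B_i}\}_{i \in [n] \setminus \{1\}}$ are i.i.d.\ Bernoullis with common mean at most $q$.

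Next I would let $X := \sum_{i=2}^{n} \mathbf{1}_{B_i}$ and set $k := \lfloor (pR)^{-1} \rfloor + 1$, so the lemma is equivalent to the tail bound $\textsf{P}(X \ge k) \le \exp(-n/C_{\ref{all columns concentration}})$. A union bound over $k$-subsets then gives
\begin{align}
    \textsf{P}(X \ge k) \le \binom{n}{k} q^{k} \le \left( \frac{en}{k} \right)^{k} \exp(-pnk/2). \nonumber
\end{align}
The inequality $k \ge 1/(pR)$ yields $en/k \le enpR$, so the logarithm of the right-hand side is bounded by $k\bigl[\log(enpR) - pn/2\bigr]$.

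The remaining step is constant bookkeeping. Writing $\log(enpR) = \log(eR) + \log(pn)$ and using the hypothesis $pn \ge C_{\ref{all columns concentration}}$, one can force the bracket to be at most $-pn/4$ by taking $C_{\ref{all columns concentration}}$ large enough in terms of $R$; concretely, this amounts to the condition $pn \ge 4\log(eR) + 4\log(pn)$, which reduces to a threshold of the form $C_{\ref{all columns concentration}} \ge C^{*}(R)$. Combining with $kpn \ge n/R$ gives $\textsf{P}(X \ge k) \le \exp(-kpn/4) \le \exp(-n/(4R))$, so the choice $C_{\ref{all columns concentration}} := \max(C^{*}(R), 4R)$ closes the argument. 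The only delicate point is this final calibration, where one must simultaneously ensure that $pn$ dominates $\log(enpR)$ and that $-kpn/4$ takes the stated form $-n/C_{\ref{all columns concentration}}$; no genuine obstacle arises, since the argument is a standard concentration calculation built on Lemma \ref{pn/8-8pn}.
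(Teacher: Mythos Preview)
Your argument is correct. The paper itself does not supply a proof of this lemma; it is quoted verbatim from Litvak--Tikhomirov \cite{LT_duke} and stated without proof. Your route---single-column concentration via Lemma~\ref{pn/8-8pn}, followed by a union bound over $k$-subsets of columns using independence---is exactly the standard way to establish such a statement and is almost certainly what underlies the cited result. The only points worth tidying are minor bookkeeping: you should note that the hypotheses $C_{\ref{all columns concentration}}p\le 1$ and $C_{\ref{all columns concentration}}\le pn$ with $C_{\ref{all columns concentration}}\ge 50$ place you inside the range $50/n<p<0.1$ required by Lemma~\ref{pn/8-8pn}, and you should verify once that $k\le n-1$ (which follows from $pn\ge C_{\ref{all columns concentration}}\ge 2$). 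With those remarks, the proof is complete.
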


Next, we introduce the tail probability of the spectral norm of the random matrix.
\begin{mylem}\label{Spectral norm}
    Let $n$ be a large enough integer, $6\log{n}/n \le p \le 1/2$, $b \in \mathbb{R}$, $\delta$ be Bernoulli(p), and $\xi$ satisfy \eqref{Condition}. Let $M$ be a $n \times n$ random matrix with i.i.d. entries with $\delta \xi$. For any $R \ge 1$, there exists $C_{\ref{Spectral norm}}=C_{\ref{Spectral norm}}(R,\xi)>1$ such that 
    \begin{align}
        \textsf{P}\left( \Vert M-\textsf{E}M\Vert \ge C_{\ref{Spectral norm}}\sqrt{pn} \right) \le \exp{(-Rpn)}.\nonumber
    \end{align}
\end{mylem}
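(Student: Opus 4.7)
The plan is a standard two-step argument: bound the expected spectral norm, then concentrate around that mean using Talagrand's convex concentration inequality.

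First, set $Z := M - \textsf{E} M$, so that the entries $Z_{ij}$ are i.i.d., centered, bounded by $K_{\xi} := 2\max_{i}|a_{i}|$, and of variance at most $pK_{\xi}^{2}$.

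For step one, I would bound $\textsf{E}\|Z\|$ by a moment method: expand $\textsf{E}\, \operatorname{tr}\bigl((ZZ^{T})^{k}\bigr)$ and apply the F\"uredi--Koml\'os-style combinatorial estimate for i.i.d.\ centered bounded entries to obtain
\begin{align*}
\textsf{E}\|Z\| \le C\bigl( K_{\xi}\sqrt{np} + K_{\xi}\sqrt{\log n} \bigr) \le C_{0}(\xi) \sqrt{pn},
\end{align*}
where the last inequality uses the hypothesis $pn \ge 6\log n$ to absorb the $\sqrt{\log n}$ term into $\sqrt{pn}$. Alternatively, one may invoke Seginer's inequality or a similar standard estimate for the expected operator norm of random matrices with i.i.d.\ bounded entries.

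For step two, I would apply Talagrand's concentration inequality for convex $1$-Lipschitz functions on a product of bounded coordinates. Since $Z \mapsto \|Z\|$ is convex and $1$-Lipschitz with respect to the Frobenius metric, and each $Z_{ij}$ lies in $[-K_{\xi}, K_{\xi}]$, Talagrand's inequality gives
\begin{align*}
\textsf{P}\bigl( \|Z\| \ge \operatorname{Med}\|Z\| + s \bigr) \le 4\exp\bigl( -s^{2}/(16 K_{\xi}^{2}) \bigr), \qquad s > 0,
\end{align*}
and the median and mean of $\|Z\|$ differ by at most an absolute constant multiple of $K_{\xi}$. Taking $s := 4K_{\xi}\sqrt{R \cdot pn}$ and combining with step one yields
\begin{align*}
\textsf{P}\bigl( \|Z\| \ge (C_{0}(\xi)+ 4K_{\xi}\sqrt{R}) \sqrt{pn} \bigr) \le 4\exp(-Rpn),
\end{align*}
which gives the lemma after a mild adjustment of $R$ to absorb the factor $4$ and setting $C_{\ref{Spectral norm}} := C_{0}(\xi) + 4K_{\xi}\sqrt{R} + 1$.

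The main obstacle is step one: the bound $\textsf{E}\|Z\| \le C\sqrt{pn}$ must be sharp for matrices whose entries have variance of order only $p$, which rules out naive net-and-Bernstein arguments (these yield bilinear-form tails decaying only like $\exp(-c\sqrt{pn})$ at the relevant scale $\sqrt{pn}$, and hence cannot survive a union bound over a net of size $5^{n}$). Once the expected-norm bound is in place, Talagrand concentration finishes the proof essentially automatically, and all constants depend only on $\max_{i}|a_{i}|$ (hence on $\xi$) and on $R$.
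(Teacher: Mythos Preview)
Your proposal is correct. The two-step scheme---bound $\textsf{E}\|M-\textsf{E}M\|$ by $C(\xi)\sqrt{pn}$ via Seginer's inequality (or Bandeira--van~Handel), using $pn\ge 6\log n$ to absorb the $\sqrt{\log n}$ term, and then apply Talagrand's convex-Lipschitz concentration to upgrade to the tail bound $\exp(-Rpn)$---is a standard and valid route, and your identification of the obstruction (that a naive net-plus-Bernstein argument would fail because the entries have variance only of order $p$) is accurate.

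The paper takes a different, much terser route: it does not argue directly on $Z=M-\textsf{E}M$, but instead decomposes
\[
M-\textsf{E}M = A + (H-\textsf{E}H), \qquad A=\bigl(\delta_{ij}(\xi_{ij}-\textsf{E}\xi)\bigr),\quad H=\textsf{E}\xi\cdot(\delta_{ij}),
\]
so that $H-\textsf{E}H$ is a scalar multiple of a centered Bernoulli$(p)$ matrix and $A$ has entries $\delta_{ij}$ times a centered bounded variable. It then cites the proof of Theorem~1.7 in Basak--Rudelson and Lemma~3.6 in Litvak--Tikhomirov to handle these pieces. Those references ultimately rely on the same circle of ideas (sharp expected-norm bounds for sparse matrices plus concentration), so the difference is largely organizational: the paper outsources the work to existing black boxes tailored to the Bernoulli and Bernoulli-times-bounded cases, whereas your argument is self-contained and treats $Z$ in one shot. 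Your version has the advantage of being explicit about where the constant $C_{\ref{Spectral norm}}(R,\xi)$ comes from; the paper's version has the advantage of brevity.
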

\begin{proof}
      We first define the random matrix
      \begin{align}
          A=(\delta_{ij}(\xi_{ij}-\textsf{E}\xi_{ij})) \text{ and } H=(\textsf{E}[\xi_{ij}]\delta_{ij}).\nonumber
      \end{align}
      By the proof of Theorem 1.7 in \cite{BR_adv} and Lemma 3.6 in \cite{LT_duke}.
      We have for any $R\ge 1 $, there exists $C_{\ref{Spectral norm}}=C_{\ref{Spectral norm}}(\xi,R)>1$ such that
      \begin{align}
          \textsf{P}\left( \Vert M-\textsf{E}M\Vert \ge C_{\ref{Spectral norm}}\sqrt{pn} \right) \le e^{-Rpn}.\nonumber
      \end{align}
\end{proof}

\subsection{Anti-concentration}\label{Anti concentration}
In this subsection, we give two anticoncentration inequalities. Firstly, we introduce the following tensorization lemma.
\begin{mylem}\label{tensorization}
    Let $\lambda, \gamma > 0$ and $ (\xi_1, \xi_2, \ldots, \xi_m)$ be independent random variables. Assume that for all $j \leq m$, we have 
\begin{align}
    \textsf{P}(|\xi_j| \leq \lambda) \leq \gamma.\nonumber
\end{align} 
Then for every $\varepsilon \in (0, 1)$ one has  
\begin{align}
    \textsf{P}\left( \|(\xi_1, \xi_2, \ldots, \xi_m)\| \leq \lambda \sqrt{\varepsilon m} \right) \leq (\varepsilon / \varepsilon)^{\varepsilon m} \gamma^{m(1-\varepsilon)}.\nonumber
\end{align}
Moreover, if there exists $\varepsilon_0 > 0$ and $K > 0$ such that for every $\varepsilon \geq \varepsilon_0$ and for all $j \leq m$ one has  
\begin{align}
    \textsf{P}(|\xi_j| \leq \varepsilon) \leq K\varepsilon,\nonumber
\end{align}  
then there exists an absolute constant $C_{\ref{tensorization}} > 0$ such that for every $\varepsilon \geq \varepsilon_0$,  
\begin{align}
    \mathbb{P}\left( \|(\xi_1, \xi_2, \ldots, \xi_m)\| \leq \varepsilon \sqrt{m} \right) \leq (C_{\ref{tensorization}} K \varepsilon)^m.\nonumber
\end{align}
\end{mylem}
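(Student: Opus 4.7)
The plan is to reduce the probability of small Euclidean norm to a product of single-variable Laplace transforms via Markov's inequality on an exponentially decaying test function, and then use the stated tail assumptions coordinate-wise. Throughout I interpret the first bound as $(e/\varepsilon)^{\varepsilon m}\gamma^{m(1-\varepsilon)}$, reading $(\varepsilon/\varepsilon)$ as a typo for $(e/\varepsilon)$.

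For the first inequality, fix $t>0$ to be optimized and observe that
\begin{align*}
\textsf{P}\bigl(\|(\xi_1,\dots,\xi_m)\|_{2}\le \lambda\sqrt{\varepsilon m}\bigr)
&=\textsf{P}\Bigl(\exp\Bigl(-t\sum_{j=1}^{m}\xi_j^{2}/\lambda^{2}\Bigr)\ge e^{-t\varepsilon m}\Bigr)\\
&\le e^{t\varepsilon m}\prod_{j=1}^{m}\textsf{E}\bigl[e^{-t\xi_j^{2}/\lambda^{2}}\bigr].
\end{align*}
Splitting each expectation over the events $\{|\xi_j|\le\lambda\}$ and $\{|\xi_j|>\lambda\}$ and using the hypothesis gives $\textsf{E}[e^{-t\xi_j^{2}/\lambda^{2}}]\le\gamma+e^{-t}$. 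Choosing $e^{-t}=\gamma\varepsilon/(1-\varepsilon)$ balances the two terms so that $\gamma+e^{-t}=\gamma/(1-\varepsilon)$. A direct computation then produces $(1-\varepsilon)^{-(1-\varepsilon)m}\varepsilon^{-\varepsilon m}\gamma^{(1-\varepsilon)m}$; the elementary inequality $-(1-\varepsilon)\log(1-\varepsilon)\le \varepsilon$ on $(0,1)$ absorbs the first factor into $e^{\varepsilon m}$, yielding the claim.

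For the second inequality, I take $t=1$ in the scheme above with $\lambda$ replaced by $\varepsilon$, so it suffices to show $\textsf{E}[e^{-\xi_j^{2}/\varepsilon^{2}}]\le C_{0}K\varepsilon$ for an absolute constant $C_{0}$. Apply the layer cake identity
\begin{equation*}
\textsf{E}\bigl[e^{-\xi_j^{2}/\varepsilon^{2}}\bigr]=\int_{0}^{1}\textsf{P}\Bigl(|\xi_j|\le\varepsilon\sqrt{\log(1/u)}\Bigr)\,du.
\end{equation*}
On $u\in(0,1/e]$ the argument $\varepsilon\sqrt{\log(1/u)}$ is at least $\varepsilon\ge\varepsilon_{0}$, so the hypothesis bounds the integrand by $K\varepsilon\sqrt{\log(1/u)}$, and $\int_{0}^{1}\sqrt{\log(1/u)}\,du=\sqrt{\pi}/2$ is a universal constant. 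On $u\in(1/e,1)$ the argument is smaller: if it still exceeds $\varepsilon_{0}$, the same bound gives at most $K\varepsilon$, and otherwise monotonicity together with the hypothesis at scale $\varepsilon_{0}$ yields $\textsf{P}(|\xi_j|\le\varepsilon\sqrt{\log(1/u)})\le\textsf{P}(|\xi_j|\le\varepsilon_{0})\le K\varepsilon_{0}\le K\varepsilon$ since $\varepsilon\ge\varepsilon_{0}$. Adding the two ranges gives $\textsf{E}[e^{-\xi_j^{2}/\varepsilon^{2}}]\le C_{0}K\varepsilon$, whence the product bound yields $(eC_{0}K\varepsilon)^{m}$.

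The only real subtlety is the handling of the range $u\in(1/e,1)$ in the layer cake: the hypothesis $\textsf{P}(|\xi_j|\le s)\le Ks$ is only assumed for $s\ge\varepsilon_{0}$, so one must separately estimate the tail where $\varepsilon\sqrt{\log(1/u)}<\varepsilon_{0}$. Leveraging monotonicity of probability at scale $\varepsilon_{0}$ and the hypothesis $\varepsilon\ge\varepsilon_{0}$ resolves this and keeps the integrand within a constant multiple of $K\varepsilon$; the rest is the routine Markov--tensorization argument common to such lemmas.
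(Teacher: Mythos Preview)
The paper does not supply a proof of this lemma; it is recorded in Section~\ref{Anti concentration} as a standard preliminary (the tensorization lemma of Rudelson--Vershynin) and is simply quoted. Your argument is precisely the classical one: Markov's inequality applied to $\exp(-t\sum_j\xi_j^2/\lambda^2)$, followed by the coordinate-wise bound $\textsf{E}[e^{-t\xi_j^2/\lambda^2}]\le\gamma+e^{-t}$ and an optimization in $t$ for the first part, and the layer-cake representation for the second. Your reading of $(\varepsilon/\varepsilon)$ as a typo for $(e/\varepsilon)$ is correct, and the handling of the range $u\in(1/e,1)$ via monotonicity at scale $\varepsilon_0$ is the right way to close the gap in the hypothesis. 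One minor point: your choice $e^{-t}=\gamma\varepsilon/(1-\varepsilon)$ tacitly assumes $\gamma\varepsilon<1-\varepsilon$; when this fails the claimed bound exceeds $1$ and is vacuous, so no harm is done, but it is worth a one-line remark.
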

Next, we need the following inequality for the L\'{e}vy concentration function of the independent sum.
\begin{mylem}\label{Levy concentration}
    There exist $C_{0}>0$ and $C_{\ref{Levy concentration}}>0$ depending on $\xi$ satisfying \eqref{Condition}. For $A \subset [n]$ and $x \in \mathbb{R}^{n}$ be such that $\Vert x_{A}\Vert_{\infty} \le C_{0}^{-1}\sqrt{p}\Vert x_{A}\Vert_{2}$. Then for $\{ \delta_{j}  \}_{j \le n}$ be Bernoulli(p) and $\{ \xi_{j} \}$ be i.i.d random variables satisfies \eqref{Condition}, we have
    \begin{align}
        \mathcal{L}\left( \sum_{i=1}^{n}{\delta_{i}\xi_{i}} , C_{\ref{Levy concentration}}\sqrt{p}\Vert x_{A}\Vert_{2} \right) \le e^{-8}.\nonumber
    \end{align}
    Moreover, for $b \in \mathbb{R}$ and $m \in \mathbb{N}$. Let $M$ be an $m \times n$ random matrix with i.i.d. entries with $\delta \xi$ and $M'$ be an independent copy of $M$, then there exists $C'_{\ref{Levy concentration}}>0$ depending on $T$ and $B$ such that 
    \begin{align}
        \textsf{P}\left( \Vert (M-M')x \Vert_{2} \le C'_{\ref{Levy concentration}}\sqrt{pm}\Vert x_{A}\Vert_{2} \right) \le \exp{(-6m)}.\nonumber
    \end{align}
\end{mylem}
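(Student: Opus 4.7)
The expression $\sum_{i=1}^n \delta_i \xi_i$ as written does not depend on $x$, so I treat it as shorthand for the weighted sum $S := \sum_{i=1}^n \delta_i \xi_i x_i$. The plan is to prove the L\'evy bound via Esseen's characteristic-function inequality, and then to deduce the vector-valued ``moreover'' bound from the first part by symmetrization together with Lemma \ref{tensorization}.

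For the L\'evy bound, set $\lambda = C_{\ref{Levy concentration}}\sqrt{p}\Vert x_A\Vert_2$ and apply Esseen's inequality $\mathcal{L}(S,\lambda) \le C\lambda \int_{-1/\lambda}^{1/\lambda}|\phi_S(t)|\,dt$. Since $\phi_{\delta_i \xi_i x_i}(t) = (1-p)+p\phi_\xi(tx_i)$, the elementary bound $|(1-p)+pz|^2 \le 1 - 2p(1-p)(1-\mathrm{Re}\,z)$ for $|z|\le 1$ yields
\begin{align}
|\phi_S(t)|^2 \le \exp\Bigl(-2p(1-p)\sum_{i\in A}(1-\mathrm{Re}\,\phi_\xi(tx_i))\Bigr).\nonumber
\end{align}
Since the support $S_\xi$ in \eqref{Condition} must contain at least two atoms for the conclusion to hold, there exist $c_\xi, \rho_\xi > 0$ such that $1-\mathrm{Re}\,\phi_\xi(s)\ge c_\xi s^2$ for all $|s|\le \rho_\xi$. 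The flatness hypothesis $\Vert x_A\Vert_\infty \le C_0^{-1}\sqrt{p}\Vert x_A\Vert_2$ combined with $|t|\le 1/\lambda$ forces $|tx_i|\le C_0^{-1}/C_{\ref{Levy concentration}} \le \rho_\xi$ once $C_0$ is chosen sufficiently large. Substituting the quadratic lower bound, extending the integration domain to $\mathbb{R}$, and evaluating the resulting Gaussian integral gives $\mathcal{L}(S,\lambda)\le C''C_{\ref{Levy concentration}}$, which drops below $e^{-8}$ after $C_{\ref{Levy concentration}}$ is fixed small enough (depending on $\xi$).

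For the moreover part, each entry $Z_k := ((M-M')x)_k = \sum_j(\delta_{kj}\xi_{kj}-\delta'_{kj}\xi'_{kj})x_j$ is the difference $Y_k - Y_k'$ of two i.i.d.\ copies of $Y := \sum_j \delta_j \xi_j x_j$ (the bias $b$ cancels), so conditioning on the primed copy gives $\mathcal{L}(Z_k,\lambda)\le \mathcal{L}(Y,\lambda)\le e^{-8}$ for $\lambda = C_{\ref{Levy concentration}}\sqrt{p}\Vert x_A\Vert_2$. The $Z_k$ are independent across $k$, so Lemma \ref{tensorization} with $\gamma = e^{-8}$ and $\varepsilon = (C'_{\ref{Levy concentration}}/C_{\ref{Levy concentration}})^2$ chosen small enough yields
\begin{align}
\textsf{P}\bigl(\Vert(Z_1,\dots,Z_m)\Vert_2 \le C'_{\ref{Levy concentration}}\sqrt{pm}\Vert x_A\Vert_2\bigr) \le (e/\varepsilon)^{\varepsilon m}e^{-8(1-\varepsilon)m}\le e^{-6m}.\nonumber
\end{align}

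The principal obstacle is calibrating the constants in the correct order $C_0 \gg 1 \gg C_{\ref{Levy concentration}} \gg C'_{\ref{Levy concentration}}$: the quadratic expansion of $1-\mathrm{Re}\,\phi_\xi$ is valid only on a $\xi$-dependent neighborhood of $0$, which forces $C_0$ to be chosen after $C_{\ref{Levy concentration}}$, and $C'_{\ref{Levy concentration}}$ must be fixed last so that the tensorization converts the per-entry bound $e^{-8}$ into the $e^{-6m}$ tail. Every other step is a routine manipulation of Esseen's inequality, symmetrization, and Lemma \ref{tensorization}.
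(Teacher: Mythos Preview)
Your proof is correct and follows the standard route: the paper itself omits the argument entirely, deferring to Lemma~3.5 of \cite{BR_adv}, which proceeds exactly as you do via Esseen's inequality, the quadratic lower bound on $1-\mathrm{Re}\,\phi_\xi$ near zero, and then tensorization (Lemma~\ref{tensorization}) for the matrix statement. Your identification of the missing $x_i$ in the sum and the handling of the constant calibration $C_0 \gg 1 \gg C_{\ref{Levy concentration}} \gg C'_{\ref{Levy concentration}}$ are both accurate.
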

\begin{myrem}
    The proof of this lemma can be carried out similarly to that of Lemma 3.5 in \cite{BR_adv} and is omitted here.
\end{myrem}
\section{Unstructured vectors}\label{Unstructured vectors section}
The main goal of this section is to prove that the probability that the small ball probability of the unstructured vectors is small is sup-exponentially small. Therefore, on the one hand we need to provide small ball probability estimates for the type of random variables considered in this paper, and on the other hand we must show that the probability of vectors with small small ball probability existing in the kernel of the matrix is exponentially small.  We will follow an approach similar to the ``U-degree" in \cite{LT_duke}.  In the first subsection we will establish the small ball probability estimate; afterwards, we will present auxiliary lemmas in separate subsections and give the final proof in the last section.
\subsection{Small ball probability via Randomized U-degree}\label{Small ball section}
The main purpose of this subsection is to present a method to estimate the probability of small ball using the ``Randomized Unstructured Degree"(RUD). Before formally introducing the RUD, we need to establish several preliminary concepts.\par 
First, for any finite subset $S \subset \mathbb{Z}$, we define $\eta[S]$ as a random variable uniformly distributed over $S$. Next, for any $K_{2} \ge 1$, we introduce a smooth cutoff function $\psi_{K_{2}} (t)$ satisfying the following conditions:
\begin{itemize}
    \item the function $\psi_{K_{2}}$ is twice continuously differentiable, with $\Vert \psi_{K_{2}}\Vert_{\infty} = 1$ and $\Vert \psi_{K_{2}}'' \Vert_{\infty} < \infty$;
    \item $\psi_{K_{2}}(t) = \frac{1}{K_{2}}$ for all $ t \le \frac{1}{2K_{2}}$;
    \item $ \frac{1}{K_{2}} \ge \psi_{K_{2}}(t) \ge t$ for all $ \frac{1}{K_{2}} \ge t \ge \frac{1}{2K_{2}}$;
    \item $\psi_{K_{2}}(t ) =t$ for all $t \ge \frac{1}{K_{2}}$.
\end{itemize}
Finally, we provide the assumption of the growth function $\textbf{g}(t)$: $[1,\infty ) \to [0,\infty )$ as follows
\begin{align}\label{Condition g}
    \forall a \ge 2,t \ge 1: \textbf{g}(at) \ge \textbf{g}(t)+a \ \text{ and } \ \prod_{j=1}^{\infty}\textbf{g}(2^{j})^{j2^{-j}} \le K_{3},
\end{align}
where $K_{3} \ge 1$.\par
Now we give the definition of the ``Randomized Unstructured Degree".
\begin{mydef}\label{Def: U-degree}
    Given $n \in \mathbb{N}^{+}$, $1 \le m \le n/2$, $b \in \mathbb{R}$, $y \in \mathbb{R}^{n}$, random variable $\xi$ satisfying \eqref{Condition} and parameters $K_{1},K_{2} \ge 1$, we define the ``Randomized Unstructured Degree" of $y$ and $\xi$ by 
    \begin{equation}
    \begin{aligned}
         &\mathrm{UD}_{n}^{\xi} (m,y,K_{1},K_{2})\\
         &    := \sup{ \left\{ t >0: A_{nm}\sum_{S_{1},\dots,S_{m}}{ 
         \int_{-t}^{t}{  \prod_{i=1}^{m}\psi_{K_{2}}\left( \left| \textsf{E}\exp{\left( 2\pi \textbf{i} y_{\eta[S_{i}]}\xi m^{-1/2} s  \right)}  \right|  \right) } \mathrm{d}s   }   \le K_{1} \right\}    },\nonumber
    \end{aligned}
    \end{equation}
    where the sum is taken over all sequences of disjoint sets $S_{1},\dots,S_{m} \subset [n]$ with cardinality $\left\lfloor n/m \right\rfloor$ and $A_{nm}^{-1}$ is the cardinality of all sequences, which implies:
    \begin{align*}
        A_{nm}:= \frac{\left( \left( \left\lfloor n/m \right\rfloor \right)!  \right)^{m} \left( n-\left\lfloor n/m \right\rfloor \right)! }{n!}.
    \end{align*}
\end{mydef}
Using the definition of RUD together with the Ess\'{e}en lemma, we can now present the proof of the following theorem, which constitutes the main result of this subsection.
\begin{mytheo}\label{Small ball prob via UD}
    Let $n ,m \in \mathbb{N}^{+}$ with $m \le n/2$ and $K_{1},K_{2} \ge 1$. Let $v \in \mathbb{R}^{n}$, $b \in \mathbb{R}$ and $\xi $ be a random variable that satisfies \eqref{Condition}. For $X= (X_{1}, \dots, X_{n})$ is a random vector uniformly distributed on the set of vectors with $m$ ones and $n-m$ zeros and $Y:=(Y_{1},\dots, Y_{n})=(X_{1}\xi_{1}+b,\dots,X_{n}\xi_{n}+b)$, where $\xi_{i}$ are i.i.d. random variables with $\xi$. Then for all $\tau \ge 0$, we have,
    \begin{align}
        \mathcal{L}\left( \sum_{i=1}^{n}{v_{i}Y_{i}},\sqrt{m}\tau \right) \le C_{\ref{Small ball prob via UD}}\left( \tau+ \mathrm{UD}_{n}^{\xi}(m,v,K_{1},K_{2})^{-1} \right),\nonumber
    \end{align}
    where $C_{\ref{Small ball prob via UD}}>0$ depending on $K_{1}$.
\end{mytheo}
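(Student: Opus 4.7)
The plan is to combine Ess\'{e}en's concentration inequality with a partition-averaging identity that bounds the characteristic function of $Z := \sum_{i=1}^{n} v_{i}Y_{i}$ by the integrand defining $\mathrm{UD}_{n}^{\xi}(m,v,K_{1},K_{2})$. Since the deterministic shift $b\sum_{i}v_{i}$ contributes only a unit-modulus factor to $\phi_{Z}$, I may replace $Z$ by $Z' := \sum_{i} v_{i}X_{i}\xi_{i}$ throughout. Ess\'{e}en's inequality then gives, after the rescaling $s = u/\sqrt{m}$ (chosen to match the factor $m^{-1/2}s$ appearing in the definition of $\mathrm{UD}$),
\begin{align}
\mathcal{L}(Z,\sqrt{m}\tau) \le C\,\tau \int_{|u|\le 1/\tau} \bigl|\phi_{Z'}(u/\sqrt{m})\bigr|\,du.\nonumber
\end{align}

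The core step is a pointwise bound of $|\phi_{Z'}(u/\sqrt{m})|$ by the $\mathrm{UD}$ integrand. Conditioning first on $X$ and then on the $\xi_{i}$'s yields $\phi_{Z'}(u/\sqrt{m}) = \textsf{E}_{X}\prod_{i\in\mathrm{supp}(X)} c_{i}(u)$, where $c_{i}(u) := \textsf{E}_{\xi}\exp(2\pi\textbf{i}\,u v_{i}\xi/\sqrt{m})$. I would next establish the combinatorial averaging identity
\begin{align}
\textsf{E}_{X} \prod_{i\in\mathrm{supp}(X)} c_{i}(u) = A_{nm} \sum_{S_{1},\ldots,S_{m}} \prod_{l=1}^{m} \frac{1}{|S_{l}|} \sum_{j\in S_{l}} c_{j}(u),\nonumber
\end{align}
where the right-hand sum runs over ordered tuples of disjoint $k$-subsets of $[n]$ with $k=\lfloor n/m\rfloor$. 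This follows by expanding the right-hand side and counting, for each fixed ordered tuple $(j_{1},\ldots,j_{m})$ of distinct indices, the number of $(S_{1},\ldots,S_{m})$ with $j_{l}\in S_{l}$; a multinomial bookkeeping combined with the stated value of $A_{nm}$ recovers the uniform distribution on size-$m$ subsets of $[n]$. Taking absolute values, applying the triangle inequality to each inner sum (so as to pass from $\frac{1}{|S_{l}|}\sum_{j\in S_{l}} c_{j}(u)$ to $|\textsf{E}\exp(2\pi\textbf{i}\,u v_{\eta[S_{l}]}\xi/\sqrt{m})|$), and invoking the elementary pointwise inequality $\psi_{K_{2}}(t)\ge t$ (immediate from the piecewise definition of $\psi_{K_{2}}$) then produces
\begin{align}
\bigl|\phi_{Z'}(u/\sqrt{m})\bigr| \le A_{nm} \sum_{S_{1},\ldots,S_{m}} \prod_{l=1}^{m} \psi_{K_{2}}\bigl(\bigl|\textsf{E}\exp(2\pi\textbf{i}\,u v_{\eta[S_{l}]}\xi/\sqrt{m})\bigr|\bigr),\nonumber
\end{align}
which is exactly the integrand in the definition of $\mathrm{UD}_{n}^{\xi}(m,v,K_{1},K_{2})$.

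It remains to insert this pointwise bound into the Ess\'{e}en estimate. If $1/\tau \le \mathrm{UD}_{n}^{\xi}(m,v,K_{1},K_{2})$, the definition of $\mathrm{UD}$ bounds the integral directly by $K_{1}$, yielding $\mathcal{L}(Z,\sqrt{m}\tau) \le CK_{1}\tau$. Otherwise, the monotonicity of $\mathcal{L}$ in its second argument lets me replace $\tau$ by the larger quantity $1/\mathrm{UD}_{n}^{\xi}$, apply the previous case, and absorb the deficit into the additive $\mathrm{UD}$ term of the claimed bound. The main obstacle is verifying the averaging identity: although combinatorially routine, the counting must be set up carefully so that the multinomial weights cancel against $A_{nm}$ to produce the uniform measure on size-$m$ subsets; all remaining ingredients (Ess\'{e}en's inequality, the triangle inequality, and the bound $\psi_{K_{2}}\ge t$) are off-the-shelf.
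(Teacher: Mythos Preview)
Your proposal is correct and follows essentially the same approach as the paper. The paper first decomposes the probability via the conditioning identity $\textsf{P}(A)=A_{nm}\sum_{S_{1},\dots,S_{m}}\textsf{P}(A\mid\Omega_{S_{1},\dots,S_{m}})$ and then applies Ess\'{e}en to each conditional sum, whereas you apply Ess\'{e}en first and then invoke the same identity at the level of the characteristic function; these are the same argument with the two steps transposed, and both lead to the identical bound $A_{nm}\sum\int\prod_{l}\psi_{K_{2}}(|\cdots|)$ on which the definition of $\mathrm{UD}_{n}^{\xi}$ is tailored (the paper leaves the step $|\cdot|\le\psi_{K_{2}}(|\cdot|)$ implicit, while you state it).
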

\begin{myrem}\label{Finite fourth moment}
    The above conclusion remains valid for random variables with finite fourth moments. Moreover, for any such random variable $\xi$, its RUD continues to satisfy Proposition \ref{Anti-concentration of U-degree in Lambda} stated in this section. To justify this claim, note that the proofs of Proposition \ref{Anti-concentration of U-degree in Lambda} given here rely mainly on the finiteness of the second moments and standard properties of expectation; since the arguments are nearly identical to those used in Section 4 in \cite{LT_duke}, we omit the details in this section. The only essential differences arise in Lemmas \ref{Moderately small product} and \ref{Integration in small interval}; we will explicitly indicate how the proofs of these two lemmas can be carried out under the sole assumption that $\xi$ has finite fourth moment in Lemma \ref{Distance from integral}.
\end{myrem}
\begin{proof}
    For any disjoint subsets $S_{1},\dots,S_{m} \subset [n]$ with cardinality $\left\lfloor n/m \right\rfloor$, set 
    \begin{align}
        \Omega_{S_{1},\dots,S_{m}}:=\left\{ \mathrm{supp}X \cap S_{i}=1  \text{ for all } i \le m \right\}.\nonumber
    \end{align}
    We have for any $\tau>0$ and $w \in \mathbb{R}$,
    \begin{align}
        \textsf{P}\left( \left|\sum_{i=1}^{n}v_{i}Y_{i} -w \right| \le \tau \right) =A_{nm}\sum_{S_{1},\dots,S_{m}}{\textsf{P}\left( \left| \sum_{i=1}^{n}v_{i}Y_{i} -w \right| \le \tau \bigg| \Omega_{S_{1},\dots,S_{m}}  \right) }.\nonumber
    \end{align}
    Furthermore, conditioned on $\Omega_{S_{1},\dots,S_{m}}$, we have 
    \begin{align}
        \sum_{i=1}^{n}{v_{i}Y_{i}}=\sum_{i=1}^{m}v_{\eta[S_{i}]}\xi_{\eta[S_{i}]}+b\sum_{i=1}^{n}v_{i}.\nonumber
    \end{align}
    By the Ess\'{e}en lemma and the independence of $\xi_{i}$ and $\eta[S_{i}]$, we immediately have 
    \begin{equation}
    \begin{aligned}
        \mathcal{L}\left( \sum_{i=1}^{m}v_{\eta[S_{i}]}\xi_{\eta[S_{i}]},\tau \right) & \le C\int_{-1}^{1}{\prod_{i=1}^{m}{ \left| \textsf{E}\exp{ \left( 2\pi \textbf{i} v_{\eta[S_{i}]}\xi_{\eta[S_{i}]}s/\tau \right) }  \right|  }    } \mathrm{d}s \\
        & = Cm^{-1/2}\tau\int_{-\sqrt{m}/\tau}^{\sqrt{m}/\tau}{  \prod_{i=1}^{m}{ \left| \textsf{E}\exp{ \left( 2\pi \textbf{i} v_{\eta[S_{i}]}\xi m^{-1/2}s \right) }  \right|  }  } \mathrm{d}s.\nonumber
    \end{aligned}
    \end{equation}
    Let $\tau = \sqrt{m}/\mathrm{UD}_{n}^{\xi}$, we obtain:
    \begin{equation}
    \begin{aligned}
        \mathcal{L}\left( \sum_{i=1}^{n}{v_{i}Y_{i}},\tau \right)
        & \le A_{nm}\sum_{S_{1},\dots,S_{m}}{\mathcal{L}\left( \sum_{i=1}^{m}v_{\eta[S_{i}]}\xi_{\eta[S_{i}]},\tau \bigg| \Omega_{S_{1},\dots,S_{m}} \right) } \\
        & \le \frac{CA_{nm}}{\mathrm{UD}_{n}}\sum_{S_{1},\dots,S_{m}}\int_{-\mathrm{UD}_{n}}^{\mathrm{UD}_{n}}{  \prod_{i=1}^{m}{ \left| \textsf{E}\exp{ \left( 2\pi \textbf{i} v_{\eta[S_{i}]}\xi m^{-1/2}s \right) }  \right|  }  } \mathrm{d}s\\
        & \le CK_{1}/\mathrm{UD}_{n}.\nonumber
    \end{aligned}
    \end{equation}
    We now complete the proof of this theorem.
\end{proof}
\subsection{Auxiliary results}\label{Auxiliary results section}
The main purpose of this subsection is to provide several auxiliary lemmas concerning gradual nonconstant vectors and related matters, thereby facilitating the proofs of the main results in the following two subsections. Part I focuses on observations about the properties of gradual non-constant vectors, while Part II presents several additional lemmas. We begin with the following definition.

For a permutation $\sigma \subset \prod_{n}$, two disjoint subsets $Q_{1}$, $Q_{2}$ of cardinality $\left\lceil \delta n  \right\rceil$ and a number $h \in \mathbb{R}$ such that
\begin{align}\label{Lambda h}
    \forall i \in Q_{1}:h+2 \le \textbf{g}(n/\sigma^{-1}(i)) \text{ and }  \forall i \in Q_{2}: -\textbf{g}(n/\sigma^{-1}(i)) \le h-\rho -2.
\end{align}

Define the sets $\Lambda_{n}:=\Lambda_{n}(k,\textbf{g},Q_{1},Q_{2},\rho,\sigma,h)$ by 
\begin{align}
    \Lambda_{n}=\left\{ x \in \frac{1}{k}\mathbb{Z}^{n}:|x_{\sigma(i)}| \le \textbf{g}(n/i) \  \forall i \le n, \ \min_{i \in Q_{1}}x_{i} \ge h \text{ and } \max_{i \in Q_{2}}x_{i}  \le h-\rho  \right\}.\nonumber
\end{align}
In what follows, we adopt the convention that $\Lambda_{n}=\emptyset$ if $h$ does not satisfy \eqref{Lambda h}.

We present in the following lemma on the approximation of $\mathcal{V}_{n}\left( r,\textbf{g},\delta,\rho \right)$ by $\Lambda_{n}$, which is the version of Lemma 4.7 and 4.8 in \cite{LT_duke}.
\begin{mylem}\label{Lambda approximation}
    There exists an absolute constant $C_{\ref{Lambda approximation}} \ge 1$ such that the following holds. There exists a subset $ \overline{\prod}_{n} \subset \prod_{n} $ with cardinality at most $\exp{(C_{\ref{Lambda approximation}}n)}$ with the following property. For any $x \in \mathcal{V}_{n}(r,\textbf{g},\delta,\rho)$, $k \ge 4/\rho$ and any $y \in \frac{1}{k}\mathbb{Z}^{n}$ with $\Vert x-y \Vert_{\infty} \le 1/k$, we have 
    \begin{align}
        y \in \bigcup_{t = \left\lfloor -4\textbf{g(6n)}/\rho \right\rfloor}^{\left\lfloor 4\textbf{g(6n)}/\rho \right\rfloor}\bigcup_{\sigma
         \in \overline{\prod}_{n}} \bigcup_{|Q_{1}|,|Q_{2}| =\left\lceil \delta n \right\rceil } \Lambda_{n}\left(k,\textbf{g}(6\cdot),Q_{1},Q_{2},\rho/4,\sigma,\rho t/4  \right).\nonumber
    \end{align}
\end{mylem}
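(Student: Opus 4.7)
The plan is to follow Lemmas 4.7--4.8 of \cite{LT_duke}. The target union of $\Lambda_{n}$'s runs over three independent parameters -- the integer height $t$, the permutation $\sigma \in \overline{\prod}_{n}$, and the witness pair $(Q_{1}, Q_{2})$ -- so for each $x \in \mathcal{V}_{n}(r,\textbf{g},\delta,\rho)$ I will fix one appropriate value of each parameter and verify that every $y \in \frac{1}{k}\mathbb{Z}^{n}$ with $\|x-y\|_{\infty} \le 1/k$ belongs to the corresponding $\Lambda_{n}$. The core quantitative task is constructing a small set $\overline{\prod}_{n}$ of size $\exp(Cn)$ whose members collectively witness the (slightly loosened) gradual bound for every $x \in \mathcal{V}_{n}$.

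For the height, pick $Q_{1}^{x}, Q_{2}^{x}$ realizing \eqref{nonconstant} for $x$ and set $h_{x} := \min_{i \in Q_{1}^{x}} x_{i}$; from $|h_{x}| \le x_{1}^{*} \le \textbf{g}(n) \le \textbf{g}(6n)$, the integer $t := \lfloor 4 h_{x}/\rho \rfloor$ lies in the range $|t| \le \lfloor 4\textbf{g}(6n)/\rho \rfloor$. Because $1/k \le \rho/4$, a routine calculation yields $\min_{i \in Q_{1}^{x}} y_{i} \ge \rho t/4$ and $\max_{i \in Q_{2}^{x}} y_{i} \le \rho t/4 - \rho/4$ (possibly after shifting $t$ by one), so after truncating $Q_{1}^{x}, Q_{2}^{x}$ to cardinality exactly $\lceil \delta n \rceil$ these become the min/max conditions defining $\Lambda_{n}$ with gap $\rho/4$ and height $\rho t/4$. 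The compatibility constraint \eqref{Lambda h} is verified by combining $|y_{i}| \le \textbf{g}(n/i) + 1/k \le \textbf{g}(6n/i)$, which uses $\textbf{g}(6t) \ge \textbf{g}(t) + 6$ from \eqref{Condition g}.

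For the permutation, I do not need the sorting permutation $\sigma_{x}$ of $x$ itself, only $\sigma_{x}$ up to a level-preserving relabeling. Introduce the dyadic blocks $B_{j} := [2^{j-1}, 2^{j}) \cap [1,n]$ and declare two permutations equivalent when the pullbacks $\sigma^{-1}(B_{j})$ coincide for every $j$. Within an equivalence class, $|x_{\sigma(i)}| = x^{*}_{i'}$ for some $i' \in B_{j}$, hence $|x_{\sigma(i)}| \le \textbf{g}(n/2^{j-1})$; combined with the perturbation bound and \eqref{Condition g} (specifically $\textbf{g}(3n/2^{j-1}) \ge \textbf{g}(n/2^{j-1}) + 3 > \textbf{g}(n/2^{j-1}) + 1/k$ since $\rho \le 1$), this upgrades to $|y_{\sigma(i)}| \le \textbf{g}(6n/i)$. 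Picking one representative per class produces $\overline{\prod}_{n}$, and its cardinality is bounded by the multinomial coefficient
\begin{align*}
|\overline{\prod}_{n}| \;\le\; \frac{n!}{\prod_{j} |B_{j}|!} \;\le\; \exp(Cn),
\end{align*}
where the second inequality follows from Stirling and the direct estimate $\sum_{j} (|B_{j}|/n)\log(n/|B_{j}|) = O(1)$ for geometric block sizes.

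The main obstacle is this permutation step: the dyadic scale must be fine enough to preserve the uniform bound $\textbf{g}(6n/i)$ on each block yet coarse enough that the entropy of the blockwise partitions stays at $Cn$, and the natural doubling scale is what makes both constraints compatible, with the factor $6$ in $\textbf{g}(6\cdot)$ absorbing both the $\ell_{\infty}$ perturbation and the within-block slack. Combining the three choices -- $t$, $\sigma \in \overline{\prod}_{n}$, and the truncated pair $(Q_{1}, Q_{2})$ -- places every such $y$ in the claimed union, completing the proof.
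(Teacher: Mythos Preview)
Your proposal is correct and follows exactly the approach of Lemmas~4.7--4.8 in \cite{LT_duke}, which is precisely what the paper invokes here (it states the lemma and cites \cite{LT_duke} without giving an independent argument). One small slip: the equivalence relation you need on permutations is $\sigma(B_{j})=\sigma'(B_{j})$ for every dyadic block $B_{j}$ (forward images), not $\sigma^{-1}(B_{j})=(\sigma')^{-1}(B_{j})$; with that correction your claim ``$|x_{\sigma(i)}|=x^{*}_{i'}$ for some $i'\in B_{j}$'' is justified, the multinomial count $n!/\prod_{j}|B_{j}|!$ is unchanged, and the rest of the argument goes through verbatim.
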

We also have the following lemma to estimate the size of $\Lambda_{n}$, which is introduced by \cite{LT_duke}.
\begin{mylem}\label{Size of Lambda approximation}
    Let $k \ge 1$, $h \in \mathbb{R}$, $\rho , \delta \in (0,1)$, $Q_{1}, Q_{2} \subset [n]$ with $|Q_{1}|, |Q_{2}| = \left\lceil \delta n \right\rceil$ and $\textbf{g}$ satisfies \eqref{Condition g} with $K_{3} \ge 1$. Then there exists an constant $C_{\ref{Size of Lambda approximation}}$ depending only on $K_{3}$ such that $\left| \Lambda_{n}(k,\textbf{g},Q_{1},Q_{2},\rho,\sigma,h)  \right| \le \left( C_{\ref{Size of Lambda approximation}}k \right)^{n}$.
\end{mylem}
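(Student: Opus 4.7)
The plan is to prove this bound by a direct coordinate-wise counting estimate, completely ignoring the constraints involving $h$, $\rho$, $Q_{1}$, $Q_{2}$ in the definition of $\Lambda_{n}$: those constraints only shrink the set, so an upper bound on the cardinality will only be strengthened. What remains is to count lattice vectors $x \in \frac{1}{k}\mathbb{Z}^{n}$ such that $|x_{\sigma(i)}| \le \textbf{g}(n/i)$ for every $i \le n$.

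For each fixed $i$, the number of admissible values for the single coordinate $x_{\sigma(i)}$ is at most $2k\,\textbf{g}(n/i)+1 \le 3k\,\textbf{g}(n/i)$, using $k \ge 1$ and $\textbf{g} \ge 1$ (since $\textbf{g}$ maps into $[1,\infty)$). Since $\sigma$ is a bijection of $[n]$, the coordinates are independent under the constraint, and taking a product over $i$ yields
\begin{align}
|\Lambda_{n}| \;\le\; \prod_{i=1}^{n} 3k\,\textbf{g}(n/i) \;=\; (3k)^{n}\prod_{i=1}^{n}\textbf{g}(n/i). \nonumber
\end{align}
The entire problem therefore reduces to showing $\prod_{i=1}^{n}\textbf{g}(n/i) \le C(K_{3})^{n}$, which is exactly the role of the summability condition in \eqref{Condition g}.

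To bound this product, I would pass to logarithms and perform a dyadic decomposition of $[n]$. For $j \ge 0$, set $I_{j} := \{i \in [n] : n/2^{j+1} < i \le n/2^{j}\}$, so that $|I_{j}| \le n 2^{-(j+1)} + 1$ and, by monotonicity, $\textbf{g}(n/i) \le \textbf{g}(2^{j+1})$ on $I_{j}$. Summing gives
\begin{align}
\sum_{i=1}^{n} \log \textbf{g}(n/i) \;\le\; n \sum_{j \ge 1} 2^{-j}\log \textbf{g}(2^{j}) \;+\; \sum_{j=1}^{\lceil \log_{2} n \rceil + 1}\log \textbf{g}(2^{j}). \nonumber
\end{align}
Since $2^{-j} \le j\,2^{-j}$ for $j \ge 1$, the first sum is at most $n\log K_{3}$ by \eqref{Condition g}. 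For the second, the same hypothesis yields $\log \textbf{g}(2^{j}) \le 2^{j}\log K_{3}/j$, so summing over $j \le \log_{2} n + O(1)$ contributes $O(n\log K_{3}/\log n)$, which is absorbed into an additional factor $\exp(O(\log K_{3})\, n)$. Combining the two bounds gives $\prod_{i}\textbf{g}(n/i) \le C(K_{3})^{n}$, and the lemma follows with $C_{\ref{Size of Lambda approximation}}$ proportional to $K_{3}$ to a fixed power.

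I do not expect any real obstacle: the argument is a standard dyadic integration and the growth condition \eqref{Condition g} was designed precisely for this estimate. The only mildly delicate point is handling the ``$+1$'' in $|I_{j}|$, which produces the lower-order second sum; this is where the full weighting by $j$ in \eqref{Condition g}, rather than a bare $2^{-j}$ weighting, pays off, since it forces $\textbf{g}(2^{j})$ to grow slower than roughly $2^{j}/j$ and keeps the $\log n$ tail linear in $n$.
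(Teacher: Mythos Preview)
Your argument is correct and is precisely the standard counting argument for this lemma; the paper itself does not give a proof but simply quotes the statement from \cite{LT_duke}, where the same coordinate-wise count followed by the dyadic estimate $\prod_{i\le n}\textbf{g}(n/i)\le C(K_{3})^{n}$ is used. There is nothing to add: your handling of the ``$+1$'' tail via the bound $\log\textbf{g}(2^{j})\le 2^{j}j^{-1}\log K_{3}$ is exactly the intended use of the $j$-weighting in \eqref{Condition g}.
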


Next, we need to introduce two integral forms of the Markov inequality, which will play a key role in the proof of the next subsection.
\begin{mylem}\label{Markov integral 1}
    For any $s \in [a,b]$, let $\xi(s)$ be a nonnegative random variable with $\xi(s) \le 1$ a.e. Assume that $\xi(s)$ is integrable on $[a,b]$ with probability $1$. If there exists an integrable random function $\phi(s)$: $[a,b] \to [0,\infty)$ satisfies for some $\varepsilon >0$ and any $s \in [a,b]$, we have
    \begin{align}
        \textsf{P}\left( \xi(s) \le \phi(s) \right) \ge 1-\varepsilon .\nonumber
    \end{align}
    Then for any $t >0$, we have 
    \begin{align}
        \textsf{P}\left( \int_{a}^{b}{ \xi(s)  } \mathrm{d}s \ge \int_{a}^{b}{ \phi(s)} \mathrm{d}s +t(b-a)   \right) \le \varepsilon/t.\nonumber
    \end{align}
\end{mylem}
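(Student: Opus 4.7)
The plan is a two-step reduction: first a pointwise deterministic comparison between $\xi(s)-\phi(s)$ and a $[0,1]$-valued indicator, and then a single application of Markov's inequality to a nonnegative random quantity whose expectation I can compute by Fubini. Concretely, I would introduce
\[
Z(s) := \mathbf{1}_{\{\xi(s) > \phi(s)\}},
\]
so that the hypothesis reads $\textsf{E}[Z(s)] = \textsf{P}(\xi(s) > \phi(s)) \le \varepsilon$ for every $s \in [a,b]$.

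The key pointwise inequality I would exploit is that, because $0 \le \xi(s) \le 1$ and $\phi(s) \ge 0$, the positive part of $\xi(s)-\phi(s)$ is bounded above by $Z(s)$ almost surely; that is, $\xi(s)-\phi(s) \le (\xi(s)-\phi(s))_+ \le Z(s)$. Integrating over $[a,b]$ yields
\[
\int_a^b \xi(s)\,\mathrm{d}s - \int_a^b \phi(s)\,\mathrm{d}s \le \int_a^b Z(s)\,\mathrm{d}s =: Y,
\]
so that the event in the conclusion is contained in $\{Y \ge t(b-a)\}$. By Tonelli's theorem, which applies since $Z \ge 0$ and is jointly measurable (an implicit consequence of the standing integrability hypothesis on $\xi$ and $\phi$), one has $\textsf{E}[Y] = \int_a^b \textsf{E}[Z(s)]\,\mathrm{d}s \le \varepsilon(b-a)$, and the ordinary Markov inequality then gives $\textsf{P}(Y \ge t(b-a)) \le \textsf{E}[Y]/(t(b-a)) \le \varepsilon/t$, which is the desired bound.

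I do not anticipate a genuine obstacle in this argument; the role of the bound $\xi(s)\le 1$ is precisely to ensure that the positive-part-vs-indicator comparison does not lose any factor, and Tonelli handles everything else. The only small point worth flagging is the measurability of $(\omega,s)\mapsto\xi(\omega,s)$ and $(\omega,s)\mapsto\phi(\omega,s)$, which the lemma takes for granted through the pathwise integrability hypothesis; this is what legitimizes treating $Y$ as a genuine nonnegative random variable to which Markov applies.
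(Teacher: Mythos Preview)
Your proof is correct and is precisely the natural argument: the pointwise bound $(\xi(s)-\phi(s))_+ \le \mathbf{1}_{\{\xi(s)>\phi(s)\}}$ (which is where the hypotheses $\xi(s)\le 1$ and $\phi(s)\ge 0$ enter), followed by Tonelli and Markov's inequality, is exactly how this lemma is meant to be proved. The paper itself states the lemma without proof, referring implicitly to \cite{LT_duke}, and your argument matches the standard one used there.
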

\begin{mylem}\label{Markov integral 2}
    Let $I$ be a finite set, and for any $i \in I$, let $\xi_{i}$ be a nonnegative random variable with $\xi_{i} \le 1$ a.e. If there exists an integrable random function $\phi(s)$: $I \to [0,\infty)$ satisfies for some $\varepsilon >0$ and any $i \in I$, we have
    \begin{align}
        \textsf{P}\left( \xi_{i} \le \phi(i) \right) \ge 1- \varepsilon.\nonumber
    \end{align}
    Then for any $t >0$, we have 
    \begin{align}
        \textsf{P}\left( \frac{1}{|I|}\sum_{i \in I}{\xi_{i}} \ge \frac{1}{|I|}\sum_{i \in I}\phi(i)+t  \right) \le \varepsilon/t.\nonumber
    \end{align}
\end{mylem}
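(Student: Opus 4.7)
The plan is the standard slick reduction to Markov's inequality via the positive-part trick. For each $i \in I$, I define the random variable
\begin{align}
Y_i := \max(\xi_i - \phi(i), 0) = (\xi_i - \phi(i))_+. \nonumber
\end{align}
Two basic observations are at the heart of the argument. First, since $\phi(i) \ge 0$ and $\xi_i \le 1$ almost surely, we have $Y_i \le \xi_i \le 1$ a.s. Second, by the pointwise definition of the positive part, the inequality $\xi_i \le \phi(i) + Y_i$ holds pointwise (either $\xi_i \le \phi(i)$, in which case $Y_i = 0$, or $\xi_i > \phi(i)$, in which case $Y_i = \xi_i - \phi(i)$).

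Next I would estimate $\mathsf{E} Y_i$. Because $Y_i$ vanishes on the event $\{\xi_i \le \phi(i)\}$ and is bounded by $1$ on the complementary event, the hypothesis $\textsf{P}(\xi_i \le \phi(i)) \ge 1 - \varepsilon$ yields
\begin{align}
\mathsf{E} Y_i \le \textsf{P}(\xi_i > \phi(i)) \le \varepsilon, \qquad \text{for every } i \in I. \nonumber
\end{align}
Note that this step goes through verbatim even though $\phi$ is itself random, since the bound on $Y_i$ uses only $\xi_i \le 1$ and $\phi(i) \ge 0$.

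Averaging the pointwise inequality over $I$ gives
\begin{align}
\frac{1}{|I|}\sum_{i\in I} \xi_i \le \frac{1}{|I|}\sum_{i\in I}\phi(i) + \frac{1}{|I|}\sum_{i\in I} Y_i, \nonumber
\end{align}
so the event whose probability we wish to bound is contained in $\{\frac{1}{|I|}\sum_{i} Y_i \ge t\}$. Applying Markov's inequality to the nonnegative random variable $\frac{1}{|I|}\sum_i Y_i$, whose expectation is at most $\varepsilon$ by linearity, yields the desired bound $\varepsilon/t$. There is no real obstacle: the only subtlety is checking that the positive-part trick is insensitive to $\phi$ being random, which it is, as noted above. (The same reasoning proves Lemma \ref{Markov integral 1} with sums replaced by integrals and Fubini used in place of linearity.)
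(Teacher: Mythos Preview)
Your proof is correct. The paper states this lemma without proof (it is presented as an auxiliary result alongside the integral version, Lemma~\ref{Markov integral 1}, both without argument), so there is no paper proof to compare against; your positive-part reduction to Markov's inequality is exactly the standard approach one would expect the authors to have in mind.
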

Our next lemma introduces Lipschitzness for the products of smoothly truncated functions $\psi_{K_{2}}(\cdot)$.
\begin{mylem}\label{Lipschitzness of the product}
    Let $y_{1},\dots,y_{m} \in \mathbb{R}$, and set $y := \max_{w \in [m]}{y_{w}}$. Let $\xi $ be a random variable that satisfies \eqref{Condition} and $S_{1}, \dots,S_{m}$ be some disjoint subsets of $[n]$. For $i \le m$ denote
    \begin{align}
        f_{i}(s):=\psi_{K_{2}}\left( \left| \frac{1}{|S_{i}|}\sum_{w \in S_{i}}{\textsf{E}\exp{(2\pi \textbf{i}y_{w}\xi s )}}  \right| \right), \ \text{ and let } \ f(s):=\prod_{i=1}^{m}{f_{i}(s)}.\nonumber 
    \end{align}
    Then $f$ is $(C_{\ref{Lipschitzness of the product}})ym$ -Lipschitz, where $C_{\ref{Lipschitzness of the product}}>0$ depends only on $\xi$ and $K_{2}$.
\end{mylem}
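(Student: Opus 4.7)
The plan is to control the derivative of each factor $f_i$, which is the composition $\psi_{K_2}\circ|\cdot|\circ g_i$ of three Lipschitz maps with
\[
g_i(s):=\frac{1}{|S_i|}\sum_{w\in S_i}\textsf{E}\exp\bigl(2\pi\textbf{i}\,y_w\xi s\bigr),
\]
and then combine the $m$ factors using the telescoping estimate for products whose entries lie in $[-1,1]$.

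First I would differentiate under the (finite) expectation to obtain
\[
|g_i'(s)|\le\frac{1}{|S_i|}\sum_{w\in S_i}2\pi|y_w|\,\textsf{E}|\xi|\le 2\pi y,
\]
using $|y_w|\le y$ together with the bound $|\xi|\le 1$ supplied by \eqref{Condition}. Hence $g_i$ is $2\pi y$-Lipschitz as a complex-valued function, and by the reverse triangle inequality so is $s\mapsto|g_i(s)|$. Next I would observe that $\psi_{K_2}$ is $L(K_2)$-Lipschitz on $[0,\infty)$: its derivative is $0$ on $[0,1/(2K_2)]$, equals $1$ on $[1/K_2,\infty)$, and is continuous on the compact transition interval $[1/(2K_2),1/K_2]$ by the $C^2$ assumption, hence bounded there. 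Chaining the three estimates gives that each $f_i$ is $2\pi L(K_2)\,y$-Lipschitz.

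For the product step I would invoke $|f_i(s)|\le\|\psi_{K_2}\|_\infty=1$ together with the elementary identity
\[
\prod_{i=1}^m a_i-\prod_{i=1}^m b_i=\sum_{i=1}^m\Bigl(\prod_{j<i}a_j\Bigr)(a_i-b_i)\Bigl(\prod_{j>i}b_j\Bigr),
\]
which for $|a_i|,|b_i|\le 1$ yields $|f(s_1)-f(s_2)|\le\sum_{i=1}^m|f_i(s_1)-f_i(s_2)|\le 2\pi L(K_2)\,ym\,|s_1-s_2|$, establishing the lemma with $C_{\ref{Lipschitzness of the product}}=2\pi L(K_2)$.

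I do not anticipate a real obstacle here: the statement is essentially a chain-rule plus product-rule calculation, and every ingredient is delivered directly by the hypotheses ($|\xi|\le 1$, $\psi_{K_2}\in C^2$ with $\|\psi_{K_2}\|_\infty=1$ and bounded second derivative). The only notational point worth flagging is that ``$y$'' must be read as $\max_w|y_w|$ rather than the literal $\max_w y_w$, otherwise the first derivative estimate breaks when some $y_w$ are negative.
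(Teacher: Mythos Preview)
Your argument is correct and follows the same overall outline as the paper: bound the Lipschitz constant of each factor $f_i$ by $C(\xi,K_2)\,y$, then combine the $m$ factors. The only genuine difference is in the product step. The paper exploits the \emph{lower} bound $f_i\ge 1/(2K_2)$ to control the ratio $f_i(s)/f_i(s+\Delta s)\le 1+C(\xi)K_2y|\Delta s|$ and then multiplies the ratios; you instead use the \emph{upper} bound $|f_i|\le 1$ together with the telescoping identity $\prod a_i-\prod b_i=\sum(\prod_{j<i}a_j)(a_i-b_i)(\prod_{j>i}b_j)$. Your route is a bit more elementary and sidesteps the somewhat delicate passage from $(1+\epsilon)^m$ to $1+Cm\epsilon$ that the paper glosses over; the paper's ratio argument, on the other hand, makes explicit why the lower cutoff in $\psi_{K_2}$ matters (though for this lemma it is not actually needed). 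Your observation that $y$ must be read as $\max_w|y_w|$ is also well taken.
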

\begin{proof}
    Note that $\psi_{K_{2}}$ is 1-Lipschitz and for any $s$ and $t$, we have
    \begin{equation}
    \begin{aligned}
        & |\sum_{w\in S}{\textsf{E}\exp{(2\pi \textbf{i}y_{w}\xi s)}}|-|\sum_{w\in S}{\textsf{E}\exp{(2\pi \textbf{i}y_{w}\xi t)}}|\\
        & \le \sum_{w \in S}\textsf{E}|\exp{( 2 \pi \textbf{i}\xi y_{w} s )} -\exp(2\pi \textbf{i} \xi y_{w} t)| \\
        & \le 2 \pi |S|\textsf{E}|\xi| y|s-t|,\nonumber
    \end{aligned}
    \end{equation}
    Which implies that $f_{i}$ is $C(\xi)y$-Lipschitz. Since $|\sum_{w \in S_{i}}{\textsf{E}\exp{(2\pi \textbf{i} \xi y_{w}s )}} | \le |S_{i}|$, we have $1/(2K_{2}) \le f_{i} \le 1$. Thus, for any $s ,\Delta s \in \mathbb{R}$, 
    \begin{align}
        \frac{f_{i}(s)}{f_{i}(s+\Delta s)} = 1+\frac{f_{i}(s)-f_{i}(s+\Delta s)}{f_{i}(s+\Delta s)} \le 1 + C(\xi)K_{2}y|\Delta s|.\nonumber
    \end{align}
    Furthermore, we provide that for the product of the $f_{i}$
    \begin{align}
        \frac{f(s)}{f(s+\Delta s)} \le (1+ C(\xi)K_{2}y|\Delta s|)^{m} \le 1+ C_{\ref{Lipschitzness of the product}}ym|\Delta s|,\nonumber
    \end{align}
    which implies our result.
\end{proof}
Finally, we give a simple combinatorial estimate from \cite{LT_duke}.
\begin{mylem}\label{Combinatorial estimate for Q1 and Q2}
    For any $\delta \in (0,1]$ there exist $n_{\ref{Combinatorial estimate for Q1 and Q2}} \in \mathbb{N}$, $c_{\ref{Combinatorial estimate for Q1 and Q2}}>0$ and $C_{\ref{Combinatorial estimate for Q1 and Q2}} \ge 1$ depending only on $\delta$ such that the following holds. Let $n \ge n_{\ref{Combinatorial estimate for Q1 and Q2}}$ and $m \in \mathbb{N}$ with $n /m \ge C_{\ref{Combinatorial estimate for Q1 and Q2}}$. Denote by $\mathcal{J}$ the collection of disjoint sequences $(S_{1},\dots,S_{m})$ with cardinality $\left\lfloor n/m  \right\rfloor$. Then for any disjoint subset $|Q_{1}|, |Q_{2}| \subset [n]$ with cardinality at least $\delta n$ we have
    \begin{equation}
    \begin{aligned}
         \bigg| \bigg\{ \left( S_{1},\dots,S_{m} \right) &  \in \mathcal{J}:\min{\left( |Q_{1}\cap S_{i}| ,|Q_{2}\cap S_{i}|  \right) } \ge \delta\left\lfloor n/m \right\rfloor/2 \text{ for at most }\\
        & c_{\ref{Combinatorial estimate for Q1 and Q2}}m \text{ indices } i  \bigg\} \bigg| \le e^{-c_{\ref{Combinatorial estimate for Q1 and Q2}}n}A_{nm}^{-1}.\nonumber
    \end{aligned}
    \end{equation}
\end{mylem}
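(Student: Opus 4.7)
The plan is to translate the combinatorial inequality into a tail estimate for the uniform distribution on $\mathcal{J}$ (noting $|\mathcal{J}|=A_{nm}^{-1}$), and then to exploit negative association of the permutation distribution. Setting $T:=\lfloor n/m\rfloor$ (large under the hypothesis $n/m\ge C_{\ref{Combinatorial estimate for Q1 and Q2}}$) and $Z_i:=\mathbf{1}_{\{\min(|Q_1\cap S_i|,|Q_2\cap S_i|)<\delta T/2\}}$, the claim becomes the tail bound
\begin{align*}
\textsf{P}\!\left(\sum_{i=1}^m Z_i\ge (1-c)m\right)\le e^{-cn}
\end{align*}
for some $c=c(\delta)>0$. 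Marginally each $S_i$ is uniform over $\binom{[n]}{T}$ by symmetry, so $|Q_j\cap S_i|$ is hypergeometric with mean at least $\delta T$; a Chernoff bound for sampling without replacement yields $\textsf{P}(|Q_j\cap S_i|<\delta T/2)\le e^{-c_1(\delta)T}$ for $j=1,2$, and hence $\textsf{P}(Z_i=1)\le 2e^{-c_1T}$.

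The next step is to upgrade this to a multi-index product tail bound. I would realize $(S_1,\dots,S_m)$ as consecutive blocks of a uniform random permutation $\pi$ of $[n]$, i.e.\ $S_i:=\pi(\{(i-1)T+1,\dots,iT\})$. The label vectors $(\mathbf{1}_{\pi(j)\in Q_1},\mathbf{1}_{\pi(j)\in Q_2})_{j\in[n]}$ follow the classical permutation/multivariate-hypergeometric model, which is negatively associated (Joag-Dev--Proschan). Writing $Z_i=1-(1-\mathbf{1}_{A_i})(1-\mathbf{1}_{B_i})$ for the two count-deficit events $A_i,B_i$ shows that $Z_i$ is a decreasing function of the labels in block $i$, and distinct blocks depend on disjoint positions. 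Negative association then delivers
\begin{align*}
\textsf{P}\!\left(\bigcap_{i\in I}\{Z_i=1\}\right)\le\prod_{i\in I}\textsf{P}(Z_i=1)\le(2e^{-c_1T})^{|I|}\quad\text{for every }I\subset[m].
\end{align*}
A union bound over subsets $I$ of size $\lceil(1-c)m\rceil$ then gives
\begin{align*}
\textsf{P}\!\left(\sum_i Z_i\ge (1-c)m\right)\le\binom{m}{\lceil(1-c)m\rceil}(2e^{-c_1T})^{\lceil(1-c)m\rceil}\le\exp\bigl(Cm-(1-c)c_1 mT\bigr),
\end{align*}
where $C=C(c)$ absorbs $\log(2e/c)$. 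Since $mT\in[n-m,n]$ and $n/m\ge C_{\ref{Combinatorial estimate for Q1 and Q2}}$ is arbitrarily large, choosing $c=c(\delta)$ small enough and $C_{\ref{Combinatorial estimate for Q1 and Q2}}$ large enough makes $(1-c)c_1mT$ dominate $Cm$, yielding the required bound $e^{-c_{\ref{Combinatorial estimate for Q1 and Q2}}n}$.

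The main obstacle is the negative-association step. A naive Markov estimate applied to $\textsf{E}\sum Z_i$ would produce only $e^{-cn/m}$ on the right-hand side, which is worthless when $m$ is comparable to $n$. The key is that although $S_1,\dots,S_m$ are globally coupled through the disjointness requirement, their $Q_j$-indicator counts behave no worse than independent variables for joint upper-tail events, so simultaneous failure on many blocks costs a \emph{product} of marginal failure probabilities. Once this product bound is in hand, the combinatorial entropy $\binom{m}{(1-c)m}\le e^{Cm}$ is negligible compared to the gain $(1-c)c_1mT\asymp n$ coming from $n/m$ being large.
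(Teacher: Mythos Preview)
The paper does not prove this lemma; it is quoted from Litvak--Tikhomirov \cite{LT_duke}, so there is no in-paper argument to compare against. Your overall strategy---passing to the uniform measure on $\mathcal{J}$, using a hypergeometric Chernoff bound for a single block, and then upgrading to a product tail estimate across blocks via negative dependence---is the right one and does yield the stated bound.

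There is, however, one real gap. You assert that the full two-colour indicator family $\{\mathbf{1}_{\pi(j)\in Q_1},\mathbf{1}_{\pi(j)\in Q_2}\}_{j\in[n]}$ is negatively associated by Joag-Dev--Proschan. This is false: for $j\ne j'$ the variables $\mathbf{1}_{\pi(j)\in Q_1}$ and $\mathbf{1}_{\pi(j')\in Q_2}$ are \emph{positively} correlated (once position $j$ takes a $Q_1$-element, the remaining pool at position $j'$ is proportionally richer in $Q_2$-elements). Hence one cannot conclude $\textsf{P}\bigl(\bigcap_{i\in I}\{Z_i=1\}\bigr)\le\prod_{i\in I}\textsf{P}(Z_i=1)$ directly, because each $Z_i$ mixes both colours and is therefore not a monotone function of a genuinely NA family.

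The repair is immediate and costs nothing. Writing $A_i=\{|Q_1\cap S_i|<\delta T/2\}$ and $B_i=\{|Q_2\cap S_i|<\delta T/2\}$, one has $Z_i\le\mathbf{1}_{A_i}+\mathbf{1}_{B_i}$, so $\sum_iZ_i\ge(1-c)m$ forces $\sum_i\mathbf{1}_{A_i}\ge(1-c)m/2$ or the same for $B$. Now each \emph{single-colour} family $(\mathbf{1}_{\pi(j)\in Q_\ell})_{j\in[n]}$ \emph{is} NA (a random permutation of a fixed $0/1$ sequence, which is exactly JDP Theorem~2.11), and $\mathbf{1}_{A_i}$ is a decreasing function of the $Q_1$-indicators on block $i$ alone. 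This gives $\textsf{P}\bigl(\bigcap_{i\in I}A_i\bigr)\le(e^{-c_1T})^{|I|}$, and your union-bound computation goes through verbatim with $(1-c)m$ replaced by $(1-c)m/2$ in the exponent; since $mT\asymp n$ still dominates the $O(m)$ entropy term, the conclusion $e^{-c_{\ref{Combinatorial estimate for Q1 and Q2}}n}$ follows.
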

\subsection{Anti-concentration of the Randomized U-degree}\label{Anti-concentration of U-degree section}
The goal of this subsection is to prove the anti-concentration of RUD in $\Lambda_{n}$. We first fix $\rho, \delta \in (0,1/4)$, a growth function $\textbf{g}$ satisfying \eqref{Condition g}, a permutation $\sigma \in \prod_{n}$, a number $h \in \mathbb{R}$, two subsets $Q_{1},Q_{2} \subset [n]$ such that $|Q_{1}|=|Q_{2}|=\left\lceil  \delta n \right\rceil$ and a random variable $\xi$ satisfying \eqref{Condition}.\par
We first give our main result in this subsection.
\begin{mypropo}\label{Anti-concentration of U-degree in Lambda}
    Let $\varepsilon \in (0,1/8)$, $\rho, \delta \in (0,1/4)$, the growth function $\textbf{g}$ satisfies \eqref{Condition g}, and random variable $\xi$ satisfies \eqref{Condition}. Then there exist $K_{\ref{Anti-concentration of U-degree in Lambda}}=K_{\ref{Anti-concentration of U-degree in Lambda}}(\xi,\delta,\rho) \ge 1$, $n_{\ref{Anti-concentration of U-degree in Lambda}}=n_{\ref{Anti-concentration of U-degree in Lambda}}(\xi,\delta,\rho,\varepsilon,K_{3}) \in \mathbb{N}$ and $C_{\ref{Anti-concentration of U-degree in Lambda}}=C_{\ref{Anti-concentration of U-degree in Lambda}}(\xi,\delta,\rho,\varepsilon,K_{3}) \in \mathbb{N}$ such that the following holds. Let $\sigma \in \prod_{n}$, $h \in \mathbb{R}$ and $Q_{1},Q_{2} \subset [n]$ with cardinality $\left\lceil  \delta n \right\rceil$. Let $8 \le K_{2} \le 1/\varepsilon$, $n \ge n_{\ref{Anti-concentration of U-degree in Lambda}}$, $m \ge C_{\ref{Anti-concentration of U-degree in Lambda}}$ with $n /m \ge C_{\ref{Anti-concentration of U-degree in Lambda}}$, $1 \le k \le \min{ \left( (K_{2}/8)^{m/2},2^{n/C_{\ref{Anti-concentration of U-degree in Lambda}}}  \right)}$ and let $X=(X_{1},\dots,X_{n})$ be a random vector uniformly distributed on $\Lambda_{n}\left( k,\textbf{g},Q_{1},Q_{2},\rho,\sigma,h  \right)$. Then, we have
    \begin{align}
        \textsf{P}\left( \mathrm{UD}_{n}^{\xi}\left( m,X,K_{\ref{Anti-concentration of U-degree in Lambda}},K_{2} \right) \le km^{1/2}/C_{\ref{Anti-concentration of U-degree in Lambda}}  \right) \le \varepsilon^{n}.\nonumber
    \end{align}
\end{mypropo}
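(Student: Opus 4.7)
The plan is to fix $T = km^{1/2}/C_{\ref{Anti-concentration of U-degree in Lambda}}$ and to show that
\begin{align*}
\mathcal{I}_T(X) := A_{nm}\sum_{(S_1,\dots,S_m)}\int_{-T}^T \prod_{i=1}^m \psi_{K_2}\!\Big(\Big|\tfrac{1}{|S_i|}\!\sum_{w \in S_i}\!\phi_\xi(X_w m^{-1/2}s)\Big|\Big)\,\mathrm{d}s \le K_{\ref{Anti-concentration of U-degree in Lambda}},
\end{align*}
where $\phi_\xi(t) := \textsf{E}\,e^{2\pi\textbf{i}\xi t}$, outside a set of $X \in \Lambda_n$ of uniform measure at most $\varepsilon^n$; by the supremum definition of $\mathrm{UD}$ this gives the claimed lower bound. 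The crucial structural observation is that every constraint defining $\Lambda_n$ in \eqref{Lambda h} is coordinate-wise, so the uniform distribution on $\Lambda_n$ is a product measure and $X_1,\dots,X_n$ are independent; this lets us factor expectations across disjoint $S_i$. The overall architecture mirrors the U-degree anti-concentration proof of \cite{LT_duke}, with the counting of distinct coordinate values replaced by anti-concentration of $\phi_\xi$.

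\textbf{Pointwise expectation and aggregation.} Call $S_i$ \emph{balanced} if $|Q_1 \cap S_i|,|Q_2 \cap S_i| \ge \delta|S_i|/2$. On a balanced $S_i$, the independent coordinates $\{X_w\}_{w \in S_i}$ split into substantial subsets with $X_w \ge h$ and $X_w \le h-\rho$, so the complex numbers $\phi_\xi(X_w m^{-1/2}s)$ cannot all line up. A second-order Taylor expansion of $\phi_\xi$ near the origin (using $\textsf{E}\xi^2 > 0$) for $|s| \lesssim \sqrt m$, together with the uniform bound $|\phi_\xi(t)| \le 1-c$ off the null set of $\phi_\xi$ for $|s| \gtrsim \sqrt m$, should yield
\begin{align*}
\textsf{E}_X\, \psi_{K_2}\!\big(|\bar g_i(X,s)|\big) \le 1 - c(\xi,\delta,\rho)\min(1,\,s^2/m)
\end{align*}
for balanced $S_i$, where $\bar g_i(X,s)$ denotes the inner average. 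By independence across disjoint $S_i$ the product factorizes in expectation, and Lemma \ref{Combinatorial estimate for Q1 and Q2} guarantees that outside an $e^{-c_{\ref{Combinatorial estimate for Q1 and Q2}}n}$-fraction of sequences $(S_1,\dots,S_m)$ at least $c_{\ref{Combinatorial estimate for Q1 and Q2}} m$ indices are balanced. Integrating the resulting Gaussian-like bound over $s \in [-T,T]$ gives $\textsf{E}_X \mathcal{I}_T(X) \lesssim 1$.

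\textbf{From Markov to $\varepsilon^n$, and the main obstacle.} Since Markov alone only yields a polynomial tail, we discretize via Lemma \ref{Lipschitzness of the product}: the integrand is $C\textbf{g}(n)m$-Lipschitz in $s$, so a net $\mathcal{N} \subset [-T,T]$ of size $\mathrm{poly}(n,k)$ captures $\mathcal{I}_T$ up to small error. At each point of $\mathcal{N}$, Lemmas \ref{Markov integral 1} and \ref{Markov integral 2} convert the expectation bound to a deterministic tail estimate, and a direct union bound over $\Lambda_n$, whose cardinality is at most $(C_{\ref{Size of Lambda approximation}}k)^n$ by Lemma \ref{Size of Lambda approximation}, closes the argument; the hypotheses $k \le 2^{n/C_{\ref{Anti-concentration of U-degree in Lambda}}}$ and $8 \le K_2 \le 1/\varepsilon$ are precisely what make the overhead absorbable. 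The hard part will be the pointwise estimate $\textsf{E}_X \psi_{K_2}(|\bar g_i|) \le 1 - c\min(1,s^2/m)$ on balanced $S_i$: one must extract genuine quadratic savings from the $\rho$-gap while respecting the nonlinear cutoff $\psi_{K_2}$ and the outer absolute value, through which Jensen's inequality goes the wrong way. We plan to work with $|\bar g_i|^2$ (where Jensen is favorable), pass expectation inside via independence of the $X_w$, harvest an $\textsf{E}\xi^2$ factor from the Taylor expansion, and split cases according to whether $|\bar g_i| \ge 1/K_2$ to handle the plateau of $\psi_{K_2}$. Remark \ref{Finite fourth moment} already indicates that only the fourth moment of $\xi$ is ultimately needed, which aligns with the bounded-support hypothesis \eqref{Condition}.
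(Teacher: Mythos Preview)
Your overall architecture (bound the integral defining $\mathrm{UD}_n^\xi$ by splitting into a central and an outer range, exploit independence of coordinates on $\Lambda_n$, use the balanced sets from Lemma \ref{Combinatorial estimate for Q1 and Q2}) matches the paper, but the plan for extracting the tail $\varepsilon^n$ has a genuine gap. The step ``a direct union bound over $\Lambda_n$'' is not meaningful here: $X$ \emph{is} the uniform random element of $\Lambda_n$, so one cannot union bound over its values to produce a tail bound for $X$ itself (that device belongs to the later Theorem \ref{Large U degree}, where one nets the kernel vector, not to this proposition). More substantively, your expectation bound $\textsf{E}_X\psi_{K_2}(|\bar g_i|)\le 1-c\min(1,s^2/m)$, even granting it, only yields $\textsf{E}_X f(s)\le (1-c)^{c'm}$ after factorizing over the $\sim m$ independent blocks; Markov then gives at best $e^{-c''m}$, which is far from $\varepsilon^n$ because $n/m\ge C_{\ref{Anti-concentration of U-degree in Lambda}}$. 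The paper avoids this by proving, for each fixed $s$ and fixed $(S_1,\dots,S_m)$, the much stronger pointwise bound $\textsf{P}\big(f_i(s)\ge 1-2/\sqrt m\big)\lesssim (c\varepsilon)^{\,cn/m}$, obtained by using all $\lfloor n/m\rfloor$ independent coordinates inside $S_i$ via the ``distance to $\mathbb Z$'' estimate of Lemma \ref{Distance from integral}; multiplying over a constant fraction of indices $i$ then gives $(\varepsilon/2)^n$ directly, after which discretization in $s$ (Lemma \ref{Lipschitzness of the product}) and the integral Markov Lemmas \ref{Markov integral 1}--\ref{Markov integral 2} finish.

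A second issue is your treatment of large $|s|$. The claim ``$|\phi_\xi(t)|\le 1-c$ off the null set of $\phi_\xi$'' fails for discrete $\xi$: the characteristic function is almost periodic and returns arbitrarily close to $1$, so there is no uniform damping away from the origin. This is exactly why the paper needs two separate edge lemmas. Lemma \ref{Moderately small product} handles the intermediate range $[z,\varepsilon'k]$ by showing $f(s)\le e^{-\sqrt m}$ everywhere there with probability $1-(\varepsilon/2)^n$, again through the coordinate-wise $\mathrm{dist}(\cdot,\mathbb Z)$ bound; and Lemma \ref{Small product} (whose proof is imported from \cite{LT_duke}) shows that on $[0,k/2]$ the set $\{s:f(s)\ge (K_2/4)^{-m/2}\}$ has measure at most a constant $R_{\ref{Small product}}$, so the residual contribution is $R_{\ref{Small product}}\sqrt m\,e^{-\sqrt m}+k\sqrt m(K_2/4)^{-m/2}\le 2$ under the hypothesis $k\le (K_2/8)^{m/2}$. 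Your sketch does not provide any mechanism for either of these phenomena, and the Taylor/Jensen plan you describe would not survive the recurrence of $|\phi_\xi|$ near $1$.
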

To establish this proposition, we need to analyze the integral behavior over the interval $[-k\sqrt{m}/C_{\ref{Anti-concentration of U-degree in Lambda}},k\sqrt{m}/C_{\ref{Anti-concentration of U-degree in Lambda}}]$ in the definition of $\mathrm{UD}_{n}^{\xi}$. We decompose it into two parts: a central subinterval and two edge subintervals. For the edges, we show that, on the one hand, the collection of sets for which the products of $\psi_{K_{2}}(\cdot)$ are exponentially small occupies the vast majority, while for the remaining small fraction of sets in the edge regions, the products remain bounded.

We give the first part: the product of $\psi_{K_{2}}$ is small enough except for a set with measure $O(1)$. The proof of this lemma is lengthy and almost identical to that of Lemma 4.17 in \cite{LT_duke}, so we omit the detailed proof and state the lemma as follows.
\begin{mylem}\label{Small product}
    For any $\varepsilon \in (0,1/2)$ there are $R_{\ref{Small product}}=R_{\ref{Small product}}(\varepsilon,S_{\xi}) \ge 1$, $l_{\ref{Small product}}=l_{\ref{Small product}}(\varepsilon,S_{\xi}) \in \mathbb{N}$, and $n_{\ref{Small product}}=n_{\ref{Small product}}(\varepsilon,S_{\xi},K_{3})$ such that the following holds. Let $k,m,n \in \mathbb{N}^{+}$, $n\ge n_{\ref{Small product}}$, $k \le 2^{n/l_{\ref{Small product}}}$, $n/m \ge l_{\ref{Small product}}$, and $4 \le K_{2} \le 2/\varepsilon$. Let $X=(X_{1},\dots,X_{n})$ be random vectors uniformly distributed on $\Lambda_{n}$. Fix disjoint subsets $S_{1},\dots,S_{m} \subset [n]$ with cardinality $\left\lfloor n/m \right\rfloor$. Then the probability of 
    \begin{align}
        \left\{ \left| \left\{  s\in [0,k/2]: \prod_{i=1}^{m}{ \psi_{K_{2}}(\left| \frac{1}{\left\lfloor n/m \right\rfloor}\sum_{w \in S_{i}}\textsf{E}_{\xi}\exp{(2\pi \textbf{i}\xi X_{w}s   )} \right|)} \ge \left( K_{2}/4 \right)^{-m/2}  \right\} \right| \le R_{\ref{Small product}}  \right\} \nonumber
    \end{align}
    at least $1-\left( \varepsilon/2 \right)^{n}$.
\end{mylem}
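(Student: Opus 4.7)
The plan is to follow the template of Lemma 4.17 in \cite{LT_duke}, adapted to the bounded discrete $\xi$ satisfying \eqref{Condition}. Writing $f_i(s) := \bigl|\frac{1}{\lfloor n/m\rfloor}\sum_{w\in S_i}\textsf{E}\exp(2\pi\textbf{i}\xi X_w s)\bigr|$ and $F(s,X) := \prod_{i=1}^{m}\psi_{K_2}(f_i(s))$, the objective is to show that the ``resonant set'' $B(X) := \{s\in[0,k/2] : F(s,X)\ge (K_2/4)^{-m/2}\}$ satisfies $|B(X)|\le R_{\ref{Small product}}$ with probability at least $1-(\varepsilon/2)^n$ over $X$ uniform on $\Lambda_n$.

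First I would reduce the continuous quantity to a grid. By Lemma \ref{Lipschitzness of the product}, $F(\cdot,X)$ is Lipschitz in $s$ with constant $O(\textbf{g}(n)m)$ on $\Lambda_n$, so it is enough to control $F(s,X)$ on a deterministic net $\mathcal{G}\subset[0,k/2]$ of spacing comparable to $(K_2/4)^{-m/2}/(\textbf{g}(n)m)$, losing only a constant factor in the threshold; the cardinality $|\mathcal{G}|$ is then polynomial in the main parameters. Second, for each fixed $s\in\mathcal{G}$ I would compute a pointwise expectation bound $\textsf{E}_X F(s,X)\le(K_2/8)^{-m}$ valid off a small exceptional set of $s$. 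The uniform measure on $\Lambda_n$ has a product-like structure: every coordinate $X_w$ outside the pinned set $Q_1\cup Q_2$ ranges freely over an arithmetic progression of step $1/k$, and the nonconstant condition supplies admissible shifts of size $\ge\rho$ on at least $\delta n$ coordinates. Combining this spread with the non-degeneracy of $t\mapsto\textsf{E}_\xi\exp(2\pi\textbf{i}\xi t)=\sum_\ell p_\ell\exp(2\pi\textbf{i}a_\ell t)$, a standard Fourier/anti-concentration argument yields $\textsf{E}_X\psi_{K_2}(f_i(s))\le(K_2/8)^{-1}$ off a set of small measure, and since distinct $S_i$ involve disjoint coordinates the product over $i$ inherits the bound.

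Third, Lemma \ref{Markov integral 2} converts the per-grid-point expectation bound into the probabilistic statement $|B(X)\cap\mathcal{G}|\le R$ with failure probability roughly $|\mathcal{G}|\cdot(K_2/8)^{-m}\cdot(K_2/4)^{m/2}=|\mathcal{G}|\cdot(K_2/16)^{-m/2}$. Under the hypotheses $m\ge C_{\ref{Small product}}$ with $n/m\ge l_{\ref{Small product}}$ and $K_2\ge 4$, the factor $(K_2/16)^{-m/2}$ is super-exponentially small in $n$ while $|\mathcal{G}|$ is merely polynomial; tuning $l_{\ref{Small product}}$ and $R_{\ref{Small product}}$ appropriately in terms of $\varepsilon$ and $S_\xi$ forces the total failure probability below $(\varepsilon/2)^n$. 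The Lipschitz reduction then upgrades $|B(X)\cap\mathcal{G}|\le R$ to $|B(X)|\le R_{\ref{Small product}}$.

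The main obstacle is the pointwise expectation bound in Step 2. Unlike the Bernoulli setting of \cite{LT_duke}, the characteristic function $\textsf{E}_\xi\exp(2\pi\textbf{i}\xi t)$ is a nontrivial trigonometric polynomial that can attain modulus close to $1$ on a nontrivial set of resonant $t$'s depending on $S_\xi$. One must therefore show two things: that the set of resonances contributes negligibly to the Lebesgue measure, and that the anti-concentration provided by the $(Q_1,Q_2)$-shift mechanism is robust enough to suppress $f_i(s)$ uniformly across $\xi\in S_\xi$. This uses only that $S_\xi$ is finite with $|a_i|\le 1$ (and more generally, only that $\xi$ has a finite fourth moment, cf. Remark \ref{Finite fourth moment}), but it is the single new ingredient beyond \cite{LT_duke}; once it is established, the rest of the argument transcribes essentially verbatim.
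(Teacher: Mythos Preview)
Your overall plan---follow Lemma~4.17 of \cite{LT_duke}, pass to a grid via the Lipschitz bound, then combine a per-$s$ estimate with Markov---matches the paper's approach (the paper simply cites \cite{LT_duke} and omits the details). The gap is in the quantitative core of your Steps~2--3.

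The per-block expectation bound $\textsf{E}_X\psi_{K_2}(f_i(s))\le (K_2/8)^{-1}$ is too coarse to close the argument. Feeding it through Markov at threshold $(K_2/4)^{-m/2}$ produces a failure probability $(16/K_2)^{m/2}$, which is $\ge 1$ for every $K_2\le 16$ (and the lemma allows $K_2$ down to $4$); even for larger $K_2$ it is only exponential in $m\le n/l_{\ref{Small product}}$, never in $n$, so it cannot deliver $(\varepsilon/2)^n$ with $R_{\ref{Small product}}$ a constant. Note also that $\psi_{K_2}\ge 1/(2K_2)$ everywhere, so no expectation bound on a single factor can be smaller than $1/(2K_2)$---the information you need is not in the first moment of $\psi_{K_2}(f_i(s))$.

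The argument in \cite{LT_duke} instead bounds the \emph{probability} $\textsf{P}_X\bigl(f_i(s)\ge 4/K_2\bigr)$, and it does so using all $\lfloor n/m\rfloor$ coordinates inside $S_i$. For the empirical average of $\lfloor n/m\rfloor$ unit-modulus numbers $\textsf{E}_\xi e^{2\pi\textbf{i}\xi X_w s}$ to have modulus $\ge 4/K_2$, a positive fraction of the individual terms must be nearly aligned; via the distance-to-integer estimate (Lemma~\ref{Distance from integral}---this is exactly where the random $\xi$ enters and where the finite-fourth-moment assumption is used) each of $\Theta(n/m)$ independent coordinates is thereby forced into an event of probability $\alpha=\alpha(\varepsilon,S_\xi)<1$. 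The product structure of $\Lambda_n$ on $S_i$ then gives $\textsf{P}_X\bigl(f_i(s)\ge 4/K_2\bigr)\le (C\alpha)^{c\lfloor n/m\rfloor}$. Since $F(s,X)\ge(K_2/4)^{-m/2}$ forces at least $m/2$ of the blocks to satisfy $f_i(s)\ge 4/K_2$, independence across blocks yields $\textsf{P}_X\bigl(F(s,X)\ge(K_2/4)^{-m/2}\bigr)\le 2^m(C\alpha)^{cn}$. Fubini over $s\in[0,k/2]$ followed by ordinary Markov on $|B(X)|$ (rather than Lemma~\ref{Markov integral 2}) then gives the conclusion; the grid/Lipschitz step is only needed to handle the exceptional set of $s$'s and is absorbed into $R_{\ref{Small product}}$.
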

Before presenting the final parts of the proofs of two lemmas, we first state the following crucial lemma, which serves as the key to both subsequent arguments. Moreover, because the random variables involved are only assumed to have bounded fourth moments, this lemma also substantiates the conclusion stated in Remark \ref{Finite fourth moment}.
\begin{mylem}\label{Distance from integral}
Let $\varepsilon \in (0,1)$, $s \in \mathbb{R}^{+}$, $k \in \mathbb{N}$, $h_{1},h_{2},h\in \mathbb{R}$ with $h=h_{2}-h_{1}>0$ and $\zeta$ be a random variable with a finite fourth moment. Then for $X$ is a random variable uniformly distributed on $\frac{1}{k}\mathbb{Z}\cap [h_{1},h_{2}]$, we have
    \begin{align}\label{Distance inequality}
        \textsf{P}\left( \textsf{E}_{\zeta}\mathrm{dist}\left( \zeta sX,\mathbb{Z} \right) \le \varepsilon \right) \le C_{\zeta}f(\varepsilon,s,h,k), 
    \end{align}
    where $C_{\zeta}>0$ is a constant depending only on $\zeta$ and $f(\varepsilon,s,h,k)$ is denoted by 
    \begin{align}
        f(\varepsilon,s,h,k):=\max{\left\{  \frac{1}{kh},\frac{\varepsilon}{sh},\varepsilon,\frac{s}{k}   \right\} }.\nonumber
    \end{align}
\end{mylem}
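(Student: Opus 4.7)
The plan is to convert the joint expectation condition into a deterministic arithmetic condition on $X$ and then apply a direct lattice-point count. Write $\mathcal{N}:=\left|\tfrac{1}{k}\mathbb{Z}\cap[h_{1},h_{2}]\right|\ge kh-1$, so that the probability in question equals $|B|/\mathcal{N}$, where $B$ is the set of bad $X\in\tfrac{1}{k}\mathbb{Z}\cap[h_{1},h_{2}]$.

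In the discrete regime actually used in this paper — where $\zeta$ satisfies \eqref{Condition} and is not identically $0$ — I would pick an atom $a_{j_{0}}\ne 0$ with $p_{j_{0}}:=\textsf{P}(\zeta=a_{j_{0}})\ge c_{\zeta}>0$. Dropping all other terms in $\textsf{E}_{\zeta}$ gives
\[
\mathrm{dist}(a_{j_{0}}sX,\mathbb{Z})\;\le\;\frac{1}{p_{j_{0}}}\textsf{E}_{\zeta}\mathrm{dist}(\zeta sX,\mathbb{Z})\;\le\;\tilde\varepsilon:=\varepsilon/p_{j_{0}},
\]
so with $\alpha:=a_{j_{0}}s$ it suffices to count $X\in\tfrac{1}{k}\mathbb{Z}\cap[h_{1},h_{2}]$ with $\mathrm{dist}(\alpha X,\mathbb{Z})\le\tilde\varepsilon$. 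The level set $\{x:\mathrm{dist}(\alpha x,\mathbb{Z})\le\tilde\varepsilon\}$ is a disjoint union of intervals of length $2\tilde\varepsilon/|\alpha|$ with period $1/|\alpha|$; at most $|\alpha|h+2$ of these meet $[h_{1},h_{2}]$, and each contains at most $\lfloor 2\tilde\varepsilon k/|\alpha|\rfloor+1$ points of $\tfrac{1}{k}\mathbb{Z}$. Multiplying, dividing by $\mathcal{N}$, and substituting $|\alpha|=|a_{j_{0}}|s$ produces the four terms $\tilde\varepsilon$, $\tilde\varepsilon/(|a_{j_{0}}|sh)$, $|a_{j_{0}}|s/k$, and $1/(kh)$, each bounded by $C_{\zeta}f(\varepsilon,s,h,k)$ after absorbing $|a_{j_{0}}|$ and $p_{j_{0}}$.

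For the extension to arbitrary $\zeta$ with only finite fourth moment, as promised in Remark \ref{Finite fourth moment}, the plan is a truncation–pigeonhole reduction to the previous step. Chebyshev applied to $\textsf{E}\zeta^{4}<\infty$ yields $M_{\zeta}$ with $\textsf{P}(|\zeta|\le M_{\zeta})\ge 3/4$, and — assuming $\zeta\not\equiv 0$ — a further pigeonhole partition of $\{\alpha_{\zeta}\le|\zeta|\le M_{\zeta}\}$ into $O_{\zeta}(1)$ subintervals locates an $I$ with $\textsf{P}(\zeta\in I)\ge\beta_{\zeta}>0$ and width bounded in terms of $\zeta$. A reverse-Markov step on the conditional distribution of $\mathrm{dist}(\zeta sX,\mathbb{Z})$ given $\zeta\in I$ then produces, for each bad $X$, a representative $\zeta^{*}\in I$ with $\mathrm{dist}(\zeta^{*}sX,\mathbb{Z})\le C_{\zeta}\varepsilon$, and the $1$-Lipschitz property $|\mathrm{dist}(\zeta^{*}sX,\mathbb{Z})-\mathrm{dist}(asX,\mathbb{Z})|\le|\zeta^{*}-a|\,s|X|$ allows replacement of $\zeta^{*}$ by the midpoint $a$ of $I$, reducing back to the single-atom discrete count.

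The main obstacle I anticipate is this final step: the Lipschitz error scales with $|X|$, which can a priori be as large as $\max(|h_{1}|,|h_{2}|)$, while $f$ depends only on $h=h_{2}-h_{1}$. To keep $C_{\zeta}$ genuinely independent of the location of $[h_{1},h_{2}]$ one must exploit the translation-invariance of the lattice count — the good intervals of $\mathrm{dist}(\alpha X,\mathbb{Z})\le\tilde\varepsilon$ tile $\mathbb{R}$ periodically with period $1/|\alpha|$, so only $h$ enters the count — and balance the pigeonhole scale against the $\varepsilon/(sh)$ summand, which is exactly what forces this particular form of $f$.
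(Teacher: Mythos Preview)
Your discrete-case argument --- pick a nonzero atom $a_{j_0}$, drop the expectation to $\mathrm{dist}(a_{j_0}sX,\mathbb{Z})\le\varepsilon/p_{j_0}$, and count lattice points --- is correct and is in fact a shortcut relative to the paper, which does not single out an atom but handles discrete and continuous $\zeta$ in one stroke. The lattice count itself is the same as the paper's first step.

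For the fourth-moment extension, though, the obstacle you flag is genuine and your proposed workaround does not close it. The translation-invariance of the lattice count is already what makes $f$ depend only on $h$; it acts \emph{after} the reduction, not during it, and so cannot absorb the Lipschitz error $|\zeta^*-a|\,s\,|X|$, which scales with $\max(|h_1|,|h_2|)$ --- a quantity $f$ never sees. No pigeonhole width depending only on $\zeta$ makes this uniformly $O_\zeta(\varepsilon)$, and letting the width depend on $\varepsilon,s,h$ destroys the $O_\zeta(1)$ interval count you need for the union bound.

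The paper's proof avoids the midpoint reduction entirely. It applies Paley--Zygmund to $\zeta^2$ (this is exactly where the fourth moment is used) together with Chebyshev to locate a \emph{fixed} interval $[a,b]\subset(0,\infty)$ with $\textsf{P}(|\zeta|\in[a,b])\ge c_0/2$, and proves the deterministic lattice bound $\textsf{P}_X(\mathrm{dist}(wsX,\mathbb{Z})\le\varepsilon')\le C_{a,b}f(\varepsilon',s,h,k)$ \emph{uniformly} over all $w\in[a,b]$ with a single constant $C_{a,b}$. Restricting the $\zeta$-expectation to $\mathcal{E}=\{|\zeta|\in[a,b]\}$ costs only a factor $2/c_0$ in $\varepsilon$; then Markov on the conditional law of $\zeta$ followed by Fubini converts the conditional-expectation event into an averaged single-$w$ probability, to which the uniform bound applies directly. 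No representative $\zeta^*(X)$ is ever chosen, so no $|X|$-dependent error appears.
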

\begin{myrem}
    Indeed, this theorem specifies the class of lattices on which RUD satisfies anticoncentration; similarly, the deeper reason that LCD in \cite{Rudelson_adv} , RLCD in \cite{Livshyts_aop} and UD in \cite{LT_duke} previously enjoyed anticoncentration was likewise an estimate of the distance from a random variable to the integers.
\end{myrem}
\begin{proof}
    We first replace $\zeta$ in \eqref{Distance inequality} with an arbitrary $w \in [a,b]$, where both $a$ and $b$ are positive real numbers.\par 
    Set $t=ws\in [as,bs]$, $P_{1}=\left\lfloor th_{1} \right\rfloor$ and $P_{2}=\left\lceil th_{2} \right\rceil$, we have
    \begin{equation}
    \begin{aligned}
        \textsf{P}\left( \mathrm{dist}(tX,\mathbb{Z}) \le \varepsilon \right) 
        & \le \sum_{i=P_{1}}^{P_{2}}\textsf{P}\left( |tX-i| \le \varepsilon  \right)\\
        & \le \sum_{i=P_{1}}^{P_{2}}\textsf{P}\left( |kX-ki/t |\le \frac{k\varepsilon}{t}  \right)\\
        & \le (t(h_{2}-h_{2})+2)\frac{2k\varepsilon + t}{kt(h_{2}-h_{1})}\\
        & \le C_{a,b}f(\varepsilon,s,h,k),\nonumber
    \end{aligned}
    \end{equation}
    where $C_{a,b}>0$ depending on $a$ and $b$.\par 
    Now, we return to the proof of \eqref{Distance inequality}. Applying Paley-zygmund inequality for $\zeta^{2}$, we have 
    \begin{align}
        \textsf{P}\left( |\zeta| \ge \frac{1}{2}\sqrt{\textsf{E}\zeta^{2} } \right) \ge \frac{9(\textsf{E}\zeta^{2})^{2}}{16\textsf{E}\zeta^{4}}:=c_{0}.\nonumber
    \end{align}
    Otherwise, by Markov's inequality, we obtain
    \begin{align}
        \textsf{P}\left( |\zeta| \ge u\sqrt{\textsf{E}\zeta^{2}}  \right) \le \frac{1}{u^{2}}.\nonumber
    \end{align}
    Furthermore, there exist $0< a < b$ such that 
    \begin{align}
        \textsf{P}\left(\mathcal{E} \right) :=\textsf{P}\left( |\zeta| \in [a,b] \right) \ge c_{0}/2.\nonumber
    \end{align}
    Thus, we have 
    \begin{equation}
    \begin{aligned}
        \textsf{P}\left(  \textsf{E}_{\zeta}\mathrm{dist}\left( \zeta s X,\mathbb{Z}  \right) \le \varepsilon  \right) \le \textsf{P}\left( \textsf{E}_{\mathcal{E}}\mathrm{dist}(|\zeta| s X,\mathbb{Z}) \le 2\varepsilon/c_{0} \right) \le C_{\zeta}f(\varepsilon,s,h,k).\nonumber
    \end{aligned}
    \end{equation}
\end{proof}
Next, we provide the other part in the edge subintervals.
\begin{mylem}\label{Moderately small product}
    For any $\varepsilon \in (0,1)$ and $z \in (0,1)$ there are $\varepsilon' \in \varepsilon'(\varepsilon,\xi,z) \in (0,1/2)$, $n_{\ref{Moderately small product}}=n_{\ref{Moderately small product}}(\varepsilon,z,\xi) \ge 10$, and $C_{\ref{Moderately small product}}=C_{\ref{Moderately small product}}(\varepsilon,z,\xi) \ge 1$ such that the following holds. Let $n\ge n_{\ref{Moderately small product}}$, $2^{n} \ge k \ge 1$, $C_{\ref{Moderately small product}} \le m \le n/4$, and $4 \le K_{2} \le 1/\varepsilon$. Let $X=(X_{1},\dots,X_{n})$ be a random vector uniformly distributed on $\Lambda_{n}$. Fix disjoint subsets $S_{1},\dots,S_{m}$ with cardinality $\left\lfloor n/m \right\rfloor$. Then the probability of the following event:
    \begin{align}
        \left\{ \forall s \in [z,\varepsilon'k]:\prod_{i=1}^{m}{ \psi_{K_{2}}\left(\left|  \frac{1}{\left\lfloor n/m \right\rfloor}\sum_{w \in S_{i}}{\textsf{E}_{\xi}\exp(2\pi \textbf{i}X_{w}\xi s )}  \right| \right) } \le e^{-\sqrt{m}} \right\} \nonumber
    \end{align}
    is at least $1-\left(\varepsilon/2  \right)^{n}$.
\end{mylem}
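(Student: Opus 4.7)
The plan is to reduce the product bound to a lower bound on $\sum_{w}D_w(s)^2$, where
\[
D_w(s):=\textsf{E}_{\widetilde\xi}\,\mathrm{dist}(X_w\widetilde\xi s,\mathbb{Z}),\qquad \widetilde\xi:=\xi-\xi',
\]
and $\xi'$ is an independent copy of $\xi$ (so $\widetilde\xi$ is bounded, hence has finite fourth moment). The symmetrization identity $|\textsf{E}_\xi e^{2\pi\textbf{i}X_w\xi s}|^2=\textsf{E}_{\widetilde\xi}\cos(2\pi X_w\widetilde\xi s)$, combined with $\cos(2\pi y)\le1-8\|y\|_{\mathbb{R}/\mathbb{Z}}^2$ and Jensen, gives $|\textsf{E}_\xi e^{2\pi\textbf{i}X_w\xi s}|^2\le 1-8D_w(s)^2$. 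Cauchy--Schwarz on the inner average $a_i(s):=\tfrac{1}{|S_i|}\sum_{w\in S_i}\textsf{E}_\xi e^{2\pi\textbf{i}X_w\xi s}$ then yields $|a_i(s)|^2\le 1-\tfrac{8}{|S_i|}\sum_{w\in S_i}D_w(s)^2$. Since $\psi_{K_2}(t)\le \max(t,1/K_2)$ with $1/K_2\le\varepsilon$, the product is bounded by
\[
\prod_{i=1}^{m}\psi_{K_2}(|a_i(s)|)\le \exp\!\Bigl(-c_1\min\!\Bigl\{\tfrac{m}{n}\!\!\sum_{w\in\bigcup_i S_i}\!\!D_w(s)^2,\ m\log K_2\Bigr\}\Bigr)
\]
for an absolute $c_1>0$. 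It therefore suffices to show that $\sum_w D_w(s)^2\gtrsim n$ uniformly over $s\in[z,\varepsilon'k]$ with probability at least $1-(\varepsilon/2)^n$.

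For this lower bound, apply Lemma \ref{Distance from integral} to $\zeta=\widetilde\xi$ coordinatewise. Since $X$ is uniform on $\Lambda_n$, the coordinates $X_w$ are mutually independent, each uniform on $\tfrac{1}{k}\mathbb{Z}$ intersected with an interval of length at most $h_w:=2\textbf{g}(n/\sigma^{-1}(w))$ (with a one-sided truncation when $w\in Q_1\cup Q_2$). Lemma \ref{Distance from integral} gives
\[
\textsf{P}(D_w(s)\le\varepsilon_1)\le C_{\widetilde\xi}\max\{1/(kh_w),\ \varepsilon_1/(sh_w),\ \varepsilon_1,\ s/k\}.
\]
For $s\in[z,\varepsilon'k]$ the last two terms are bounded by $\max(\varepsilon_1,\varepsilon')$. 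The growth condition \eqref{Condition g} (in particular $\textbf{g}(2t)\ge\textbf{g}(t)+2$) guarantees that for any $a\in(0,1)$ the bulk set $W:=\{w:h_w\ge H_0(a)\}$ has $|W|\ge (1-a)n$, since $h_w\ge 2\textbf{g}(2^j)\ge 4j+2$ whenever $\sigma^{-1}(w)\le n/2^j$. Choosing $\varepsilon_1,\varepsilon',a$ small and $H_0$ large --- all depending only on $\xi,z,\varepsilon$ --- ensures $\textsf{P}(D_w(s)\le\varepsilon_1)\le 1/2$ for every $w\in W$, hence $\textsf{E} D_w(s)^2\ge \varepsilon_1^2/2$. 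By independence and Hoeffding,
\[
\textsf{P}\Bigl(\sum_{w\in W}D_w(s)^2\le \varepsilon_1^2 n/8\Bigr)\le e^{-c(\varepsilon_1)n},
\]
which combined with the previous display forces $\prod_i\psi_{K_2}(|a_i(s)|)\le e^{-c_0 m}$ with $c_0=c_0(\varepsilon_1)>0$. Since $c_0m\ge\sqrt m$ once $m\ge C_{\ref{Moderately small product}}:=\lceil c_0^{-2}\rceil$, this settles the lemma pointwise in $s$.

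To upgrade to uniform control over $s\in[z,\varepsilon'k]$, invoke Lemma \ref{Lipschitzness of the product}: the map $s\mapsto\prod_i\psi_{K_2}(|a_i(s)|)$ is $(C_{\ref{Lipschitzness of the product}}ym)$-Lipschitz with $y:=\max_w|X_w|\le\textbf{g}(n)\le\exp(\log^2(2n))$. Taking a $\delta$-net with $\delta=e^{-\sqrt m}/(2C_{\ref{Lipschitzness of the product}}ym)$, the net has cardinality at most $\varepsilon'k\cdot 2C_{\ref{Lipschitzness of the product}}ym\cdot e^{\sqrt m}\le 2^{C'n}$ (using $k\le 2^n$ and $\varepsilon'\le 1$). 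A union bound over this net combined with the single-$s$ estimate yields the desired $(\varepsilon/2)^n$ bound, provided the Hoeffding exponent $c(\varepsilon_1)$ exceeds $C'+\log(2/\varepsilon)$, which we arrange by shrinking $\varepsilon_1,\varepsilon',a$ further if necessary.

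The principal obstacle is the $1/(kh_w)$ term in Lemma \ref{Distance from integral}, which is not small when both $k$ and $h_w$ are of order $1$ (for example $k=1$ together with $w$ near the tail of $\sigma$): for such $w$, Lemma \ref{Distance from integral} alone provides no useful anti-concentration for $D_w$. The remedy is precisely the restriction to the bulk set $W$ above; the growth-function condition confines the ``thin'' coordinates to an arbitrarily small fraction of $[n]$, so the loss in the lower bound on $\sum_w D_w(s)^2$ is only a constant factor, preserving the exponential rate in the final estimate.
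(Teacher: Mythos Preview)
Your overall architecture (pointwise bound via Lemma \ref{Distance from integral}, then Lipschitz net from Lemma \ref{Lipschitzness of the product}) is the same as the paper's, but your pointwise step differs: you apply Cauchy--Schwarz to each block average $a_i(s)$ and reduce to the \emph{individual} quantities $D_w(s)=\textsf{E}_{\widetilde\xi}\mathrm{dist}(X_w\widetilde\xi s,\mathbb{Z})$, whereas the paper works with \emph{pairs} $X_w-X_{w'}$ inside each $S_i$ and bounds $\textsf{P}\bigl(\textsf{E}\,\mathrm{dist}(\xi s(X_w-X_{w'}),\mathbb{Z})\le 2m^{-1/4}\bigr)\le C_\xi\varepsilon'/z$ directly. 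Both routes feed into the same Lipschitz/net endgame.

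However, there is a real gap in your concentration step. You write that the Hoeffding exponent $c(\varepsilon_1)$ for $\sum_{w}D_w(s)^2$ can be pushed above $C'+\log(2/\varepsilon)$ ``by shrinking $\varepsilon_1,\varepsilon',a$''. This is backwards: with $D_w^2\in[0,1/4]$ and $\textsf{E}D_w^2\ge\varepsilon_1^2/2$, Hoeffding gives an exponent of order $\varepsilon_1^4$, which \emph{decreases} as $\varepsilon_1\to 0$ and is in any case at most $1/16$. The net, on the other hand, has size $\varepsilon'k\cdot C_{\ref{Lipschitzness of the product}}\textbf{g}(n)\,m\,e^{\sqrt m}\lesssim 2^{(2+o(1))n}$ (using $k\le 2^n$ and $\textbf{g}(n)\le 2^n$), so you need a pointwise failure probability with exponent exceeding $2\log 2+\log(2/\varepsilon)$, which Hoeffding on $\sum D_w^2$ can never deliver. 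The paper avoids this by making the \emph{per-coordinate} failure probability itself arbitrarily small: Lemma \ref{Distance from integral} gives $p_0:=\textsf{P}(D_w\le\varepsilon_1)\le C_{\widetilde\xi}\max\{1/(kh_w),\varepsilon_1/(sh_w),\varepsilon_1,s/k\}$, and on the nonempty range $s\in[z,\varepsilon'k]$ one has $k\ge z/\varepsilon'$, whence all four terms are $\lesssim(\varepsilon'+\varepsilon_1)/z$. Shrinking $\varepsilon'$ (and $\varepsilon_1$) drives $p_0\to 0$; then a binomial tail bound on the indicators $\mathbf{1}\{D_w<\varepsilon_1\}$ yields $\textsf{P}\bigl(\#\{w:D_w<\varepsilon_1\}>n/2\bigr)\le(4p_0)^{n/2}$, whose exponent grows without bound as $p_0\to 0$ and beats the net. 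Replace your Hoeffding step by this argument and the proof goes through.

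Two smaller remarks. First, your ``principal obstacle'' paragraph is a red herring: every coordinate interval in $\Lambda_n$ has length $h_w\ge 2$ (this is exactly what \eqref{Lambda h} guarantees for $w\in Q_1\cup Q_2$, and $2\textbf{g}(1)\ge 2$ otherwise), so no bulk set $W$ is needed; and the term $1/(kh_w)\le 1/(2k)\le\varepsilon'/(2z)$ is automatically small whenever $[z,\varepsilon'k]$ is nonempty. Second, your claim that $|W|\ge(1-a)n$ with $h_w\ge H_0(a)$ large is false in the direction you state: most coordinates have $h_w$ of order $1$, not large, since $h_{\sigma(i)}=2\textbf{g}(n/i)$ is close to $2\textbf{g}(1)$ for $i$ near $n$.
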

\begin{proof}
    Let $\varepsilon'$ will be chosen later and be small enough. Let $m \ge \left( \varepsilon' z\right)^{-4} \ge 10$. For any $s \in [z,\varepsilon'k]$ and $i \le m$ denote
    \begin{align}
        \gamma_{i}(s):=\left| \frac{1}{\left\lfloor n/m \right\rfloor} \sum_{w \in S_{i}}\textsf{E}_{\xi}\exp(2 \pi \textbf{i}X_{w}\xi s)  \right|, \ \ f_{i}(s):=\psi_{K_{2}}(\gamma_{i}(s)), \ \text{ and }\nonumber
    \end{align}
    \begin{align}
        f(s):=\prod_{i=1}^{m}f_{i}(s).\nonumber
    \end{align}
    Recall the definition of $\psi_{K_{2}}$, we have $f_{i}(s)=\gamma_{i}(s)$ whenever $\gamma_{i}(s) \ge 1/K_{2}$. Note that for complex number $z_{1},\dots,z_{N}$ with $|z_{i}| \le 1$ their average $v:=\sum_{i=1}^{N}{z_{i}}/N$ has modulus $1-\alpha>0$, then we have 
    \begin{align}
        N(1-\alpha) \le \sum_{i=1}^{n}\mathrm{Re}\left\langle z_{i},v \right\rangle \le N.\nonumber
    \end{align}
    Thus, using Markov's inequality, we have at least $N/2+1$ indices $i$ such that $\mathrm{Re}\left\langle z_{i},v \right\rangle \ge 1-4\alpha$. Furthermore, there exists an index $j$ such that there are at least $N/2$ indices $i$ with $\mathrm{Re}\left\langle z_{i},\bar{z_{j}} \right\rangle \ge 1-16\alpha$. Thus, the event $\left\{ f_{i}(s) \ge 1-\frac{2}{\sqrt{m}}  \right\}$ is contained in the event
    \begin{equation}
    \begin{aligned}
        \bigg\{ \exists w' \in S_{i}:\textsf{E}_{\xi}\cos{ \left( 2\pi \xi s (X_{w}-X_{w}') \right) } \ge 1-\frac{32}{\sqrt{m}}\\ \text{ for at least } \frac{n}{2m} \text{ indices } w \in S_{i}\setminus \{ w'\}  \bigg\}. \nonumber
    \end{aligned}
    \end{equation}
    We now have 
    \begin{equation}
        \begin{aligned}
             \textsf{P} & \left( f_{i}(s) \ge 1-\frac{2}{\sqrt{m}} \right)\\
            & \le \frac{n}{m}2^{n/m}\max_{w'\in S_{i},F\subset S_{i}\setminus \{w' \} |F| \ge n/(2m) }{\textsf{P}\left( \forall w \in F:\textsf{E}_{\xi}\mathrm{dist}(\xi s(X_{w}-X_{w'}),\mathbb{Z}) \le \frac{2}{m^{1/4}} \right)   }.\nonumber
        \end{aligned}
    \end{equation}
    Note that if we fix $X_{w'}$, we have $X_{w}-X_{w'} \in \frac{1}{k}\mathbb{Z}\cap [h_{1},h_{2}]$, where $h_{2}-h_{1} \ge 2$. Applying Lemma \ref{Distance from integral} for $s \in [z,\varepsilon'k]$, $\varepsilon=\frac{2}{m^{1/4}}$ and $\xi$ satisfying \ref{Condition}, we obtain
    \begin{align}
        \textsf{P}\left( \textsf{E}_{\xi}\mathrm{dist}(\xi s(X_{w}-X_{w'}),\mathbb{Z}) \le \frac{2}{m^{1/4}} \right) \le C_{\xi}\varepsilon'/z.\nonumber
    \end{align}
    Furthermore, we have 
    \begin{align}
        \textsf{P}\left( f_{i}(s) \ge 1-\frac{2}{\sqrt{m}} \right) \le \frac{n}{m}\left( \frac{C_{\xi}\varepsilon'}{z} \right)^{n/(2m)}.\nonumber
    \end{align}
    Using this estimate and recall the definition of the $\psi_{K_{2}}$, we have 
    \begin{equation}
    \begin{aligned}
        \textsf{P}\left(  f(s) \ge (1-\frac{2}{\sqrt{m}})^{3m/4} \right)
        & \le \textsf{P}\left( f_{i}(s) \ge 1-\frac{2}{\sqrt{m}} \text{ for at least } m/4 \text{ indices } i \right)\\
        & \le \left( \frac{16n}{m}\right)^{m/4}\left( \frac{C_{\xi}\varepsilon'}{z} \right)^{n/8}.\nonumber
    \end{aligned}
    \end{equation}
    Finally, we discrete the interval $[z,\varepsilon'k]$ and recall for $X \in \Lambda_{n}$, we have$\Vert X\Vert_{\infty} \le  \textbf{g}(n) \le 2^{n}$. Then by the Lemma \ref{Lipschitzness of the product}, we have $f(s)$ is $C_{\ref{Lipschitzness of the product}}2^{n}m$-Lipschitz.\par 
    Set 
    \begin{align}
         \beta:=(1-\frac{2}{\sqrt{m}})^{3m/4}\left( C_{\ref{Lipschitzness of the product}}2^{n}m \right)^{-1}   \ \text{ and }\ T:=[z,\varepsilon'k]\cap \mathbb{Z}.\nonumber
    \end{align}
    As the last step, we complete the proof of this lemma by 
    \begin{equation}
    \begin{aligned}
        \textsf{P}&\left( \forall s \in [z,\varepsilon'k]:f(s)\le \left( 1-\frac{2}{\sqrt{m}} \right)^{3m/4}  \right) \\
        & \ge \textsf{P}\left( \forall s \in T:f(s) \le (1-\frac{2}{\sqrt{m}})^{3m/4}  \right)\\
        & \ge 1-\left( \frac{16n}{m}\right)^{m/4}\left( \frac{C_{\xi}\varepsilon'}{z} \right)^{n/8} \ge 1-\left( \varepsilon/2 \right)^{n},\nonumber
    \end{aligned}
    \end{equation}
    where $\varepsilon'=c_{\xi}\varepsilon^{8}z$.
\end{proof}
Our last part is to prove that the integration of the product of $\psi_{K_{2}}(\cdot)$ in a small central interval is small. We give the following lemma.
\begin{mylem}\label{Integration in small interval}
    For any $\varepsilon \in (0,1)$, $\rho \in (0,1/4)$ and $\delta \in (0,1/2)$, there exist $n_{\ref{Integration in small interval}}=n_{\ref{Integration in small interval}}(\varepsilon,\delta,\rho,\xi) \in \mathbb{N}$, $C_{\ref{Integration in small interval}}=C_{\ref{Integration in small interval}}(\varepsilon.\delta,\rho,\xi) \ge 1$ and $K_{\ref{Integration in small interval}}=K_{\ref{Integration in small interval}}(\delta,\rho,\xi) \ge 1$ such that the following holds. Let $n \ge n_{\ref{Integration in small interval}}$, $k\ge 1$, $m \in \mathbb{N}$ with $n/m \ge C_{\ref{Integration in small interval}}$ and $m\ge 2$. Let $X=(X_{1},\dots,X_{n})$ be a random vector uniformly distributed on $\Lambda_{n}$. Then for every $K_{2} \ge 4$, 
    \begin{equation}
        \begin{aligned}
        \textsf{P}& \left( A_{nm}\sum_{S_{1},\dots,S_{m}}\int_{-\sqrt{m}/C_{\ref{Integration in small interval}}}^{\sqrt{m}/C_{\ref{Integration in small interval}}}\prod_{i=1}^{m}\psi_{K_{2}}\left( \left| \textsf{E}_{\xi}\exp{(2\pi \textbf{i}X_{\eta[S_{i}]} \xi m^{-1/2}s )} \right| \right)\mathrm{d}s  \ge K_{\ref{Integration in small interval}} \right)\\
        & \le  \left(\frac{\varepsilon}{2}\right)^{n}.\nonumber
    \end{aligned}
    \end{equation} 
\end{mylem}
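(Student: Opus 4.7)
The goal is to show that with probability at least $1-(\varepsilon/2)^n$ over $X\in\Lambda_n$, the Fubini-swapped integrand
\[
\mathcal{I}(s):=A_{nm}\sum_{S_1,\dots,S_m}\prod_{i=1}^{m}\psi_{K_2}\bigl(\phi_i(s)\bigr),\qquad \phi_i(s):=\bigl|\textsf{E}\exp(2\pi\textbf{i}X_{\eta[S_i]}\xi m^{-1/2}s)\bigr|,
\]
obeys a Gaussian-type decay $\mathcal{I}(s)\le c_0\exp(-c_1 s^2)$ on $|s|\le \sqrt{m}/C_{\ref{Integration in small interval}}$, with $c_0,c_1>0$ depending only on $\xi,\delta,\rho,K_3$. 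Integrating in $s$ then yields the claimed bound by $K_{\ref{Integration in small interval}}$, since $\int_{\mathbb{R}}c_0\exp(-c_1 s^2)\,\mathrm{d}s=O(1)$.

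The analytic starting point is Cauchy--Schwarz in the $\eta[S_i]$-expectation together with the symmetrization $\tilde\xi:=\xi-\xi'$ (with $\xi'$ an independent copy), giving
\[
\phi_i(s)^2\le \textsf{E}_{\eta_i,\tilde\xi}\bigl[\cos(2\pi X_{\eta_i}\tilde\xi s m^{-1/2})\bigr].
\]
Using $1-\cos t\ge t^2/\pi^2$ on $|t|\le\pi$ and restricting to ``tame'' coordinates $w$ with $|X_w|\le M$ (for a constant $M=M(\xi,\delta,\rho)$ chosen so that $4M\le C_{\ref{Integration in small interval}}$), the bound $|X_{\eta_i}\tilde\xi s m^{-1/2}|\le 1/2$ holds deterministically on the tame event, yielding
\[
1-\phi_i(s)^2\ge \frac{4 s^2}{m}\,\textsf{E}\bigl[X_{\eta_i}^2\tilde\xi^2\mathbb{1}_{|X_{\eta_i}|\le M}\bigr].
\]

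Next, using Lemma \ref{Combinatorial estimate for Q1 and Q2} I would discard the $e^{-cn}$ fraction of tuples $(S_1,\dots,S_m)$ not satisfying $\min(|Q_1\cap S_i|,|Q_2\cap S_i|)\ge \delta\lfloor n/m\rfloor/2$ for at least $c_{\ref{Combinatorial estimate for Q1 and Q2}}m$ indices $i$. The growth-function condition \eqref{Condition g} bounds the total number of wild coordinates $\{w:|X_w|>M\}$ by $n/\textbf{g}^{-1}(M)$, which is a small fraction of $\delta n$ once $M$ is large in terms of $\delta$ and $K_3$; a second combinatorial restriction of the same flavour (again losing only $e^{-cn}$) ensures that each of the $c m$ good indices retains $\ge \delta\lfloor n/m\rfloor/4$ tame elements in $Q_j\cap S_i$ for both $j=1,2$. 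Because $X\in\Lambda_n$ enforces a gap of at least $\rho$ between values on $Q_1$ and $Q_2$, the nonconstant structure forces $\textsf{E}[X_{\eta_i}^2\mathbb{1}_{|X_{\eta_i}|\le M}]\ge c_3(\delta,\rho)$ for each such $i$; combined with $\textsf{E}\tilde\xi^2=2\mathrm{Var}(\xi)>0$ this yields $1-\phi_i(s)^2\ge c_4 s^2/m$ for at least $c m$ indices, whence $\prod_i\psi_{K_2}(\phi_i(s))\le \exp(-c's^2)$.

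The main obstacle is upgrading the preceding (essentially deterministic) decay to the super-exponential probability bound $(\varepsilon/2)^n$ over $X$. Following Remark \ref{Finite fourth moment} and mirroring the proofs of Lemmas \ref{Small product} and \ref{Moderately small product}, I would discretize $s$ on a grid whose spacing is dictated by the Lipschitz estimate of Lemma \ref{Lipschitzness of the product}, and at each grid point use Lemma \ref{Distance from integral} to bound the probability that $\textsf{E}_{\tilde\xi}\mathrm{dist}(\tilde\xi s X_{\eta_i}m^{-1/2},\mathbb{Z})$ is small --- this is precisely the step where the fourth-moment hypothesis of Remark \ref{Finite fourth moment} is used. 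Finally, the pointwise-in-$s$ estimates are converted into the required integral tail bound via the Markov-type integral Lemmas \ref{Markov integral 1} and \ref{Markov integral 2}. The most delicate bookkeeping is ensuring that the cumulative probability loss across the grid and across the two combinatorial restrictions telescopes to at most $(\varepsilon/2)^n$, while keeping $C_{\ref{Integration in small interval}}$ and $K_{\ref{Integration in small interval}}$ bounded in terms of $(\xi,\delta,\rho,K_3)$.
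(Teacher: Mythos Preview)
Your core approach in the first three paragraphs is essentially correct and, in fact, \emph{simpler} than the paper's: the symmetrization-plus-truncation argument yields a \emph{deterministic} bound. Once you restrict to the $(1-e^{-cn})$-fraction of tuples from Lemma \ref{Combinatorial estimate for Q1 and Q2}, and use that the wild set $W=\{w:|X_w|>M\}$ has size at most $n/\textbf{g}^{-1}(M)$ (so at most $O(m/\textbf{g}^{-1}(M))$ indices $i$ can have $|W\cap S_i|>\tfrac{\delta}{4}\lfloor n/m\rfloor$, by pigeonhole), you get $\prod_i\psi_{K_2}(\phi_i(s))\le e^{-c''s^2}$ for \emph{every} $X\in\Lambda_n$ and every good tuple. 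Integrating in $s$ and absorbing the bad tuples as $2\sqrt{m}\,e^{-cn}/C$ finishes the job with failure probability zero, which is trivially $\le(\varepsilon/2)^n$. Your final paragraph---the discretization of $s$, the appeal to Lemma \ref{Distance from integral}, and the Markov-integral lemmas---is therefore unnecessary for \emph{your} route, and the sentence ``the main obstacle is upgrading \ldots'' misreads what remains to be done.

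The paper proceeds differently. Instead of truncating individual coordinates, it pairs $w_1\in Q_1\cap S_i$ with $w_2\in Q_2\cap S_i$ and bounds $|\textsf{E}e^{2\pi\textbf{i}\xi X_{w_1}s}+\textsf{E}e^{2\pi\textbf{i}\xi X_{w_2}s}|\le 2\textsf{E}|\cos(\pi\xi(X_{w_1}-X_{w_2})s)|$. When the admissible range of $X_{w_1}-X_{w_2}$ is bounded (by $O(1/\bar\varepsilon)$) the Taylor estimate is deterministic; when the range is large, the paper genuinely invokes Lemma \ref{Distance from integral} to get a probabilistic bound $\textsf{P}(\textsf{E}_\xi\mathrm{dist}(\xi s(X_{w_1}-X_{w_2}),\mathbb{Z})\le s)\le C_\xi\bar\varepsilon$, and only then applies Lemmas \ref{Markov integral 1} and \ref{Markov integral 2}. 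The payoff of the paper's route is that the constants $C_{\ref{Integration in small interval}},K_{\ref{Integration in small interval}}$ depend only on $(\varepsilon,\delta,\rho,\xi)$ as stated. Your truncation level $M$ must satisfy $\textbf{g}^{-1}(M)\gtrsim 1/(c_{\ref{Combinatorial estimate for Q1 and Q2}}\delta)$, which forces $M$---and hence your $C_{\ref{Integration in small interval}}\approx 4M$---to depend on the growth function (equivalently on $K_3$), a dependence not present in the lemma's statement. This is a cosmetic discrepancy (the downstream Proposition \ref{Anti-concentration of U-degree in Lambda} already depends on $K_3$), but you should flag it.
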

\begin{proof}
    Let $\bar{\varepsilon}=\bar{\varepsilon}(\varepsilon,\xi) $ will be chosen later. Recall the definition of $\mathcal{J}$ in Lemma \ref{Combinatorial estimate for Q1 and Q2}, for $S_{1},\dots,S_{m} \in \mathcal{J}$, denote
    \begin{align}
        \gamma_{i}(s):=\left| \frac{1}{\left\lfloor n/m \right\rfloor} \sum_{w \in S_{i}}\textsf{E}_{\xi}\exp(2 \pi \textbf{i}X_{w}\xi s)  \right|, \ \ f_{i}(s):=\psi_{K_{2}}(\gamma_{i}(s)), \ \text{ and }\nonumber
    \end{align}
    \begin{align}
        f(s):=\prod_{i=1}^{m}f_{i}(s).\nonumber
    \end{align}
    In the following proof, we decompose $\mathcal{J}$ into two parts and then estimate the corresponding values of $f(s)$ separately. Firstly, we assume that $n \ge n_{\ref{Combinatorial estimate for Q1 and Q2}}$ and $n/m \ge C_{\ref{Combinatorial estimate for Q1 and Q2}}$. We also define 
    \begin{equation}
        \begin{aligned}
         \mathcal{J}':= & \bigg\{ (S_{1},\dots,S_{m}) \in \mathcal{J}: \\
        & \min{\left(|S_{i}\cap Q_{1}|,|S_{i}\cap Q_{2}|\right)} \ge \delta\left\lfloor n/m\right\rfloor/2 \text{ for at least } c_{\ref{Combinatorial estimate for Q1 and Q2}}m \text{ indices } i \bigg\}.\nonumber
    \end{aligned}
    \end{equation}
    Fix a $(S_{1},\dots,S_{m}) \in \mathcal{J}'$ and let $J \subset [m]$ be a subset of cardinality $\left\lceil c_{\ref{Combinatorial estimate for Q1 and Q2}} m \right\rceil$ such that
    \begin{align}
        \forall j \in J: \ \ \min\left( |S_{j}\cap Q_{1}|,|S_{j}\cap Q_{2}|  \right) \ge \delta\left\lfloor n/m \right\rfloor/2.\nonumber
    \end{align}
    Thus, for all $i \in J$, within $S_{i}$, we can find at least $\frac{\delta}{2}\left\lfloor n/m \right\rfloor$ disjoint pairs of indices $(w_{1},w_{2}) \in Q_{1}\times Q_{2}$. Let $T $ be a subset of such pairs with cardinality $\frac{\delta\left\lfloor n/m \right\rfloor}{2}$, we obtain for $s >0$ and $a:=\min_{i \le k}|a_{i}|$,
    \begin{equation}
    \begin{aligned}
         & \textsf{P}\left( \gamma_{i}(s) \ge 1-\frac{\pi a^{2} \rho^{2}\delta s^{2}}{2} \right) \\
         \le &  \textsf{P}( \left| \textsf{E}_{\xi}\exp(2\pi \textbf{i}\xi X_{w_{1}}s) +\textsf{E}_{\xi}\exp(2\pi\xi X_{w_{2}}s ) \right| \ge 2-2\pi a^{2} \rho^{2}s^{2}\\
         & \text{ for at least } \frac{\delta}{4}\left\lfloor n/m \right\rfloor \text{ pairs } (w_{1},w_{2}) \in T  ).\nonumber
    \end{aligned}
    \end{equation}
    We return to consider the probability of the following event:
    \begin{align}
        \left\{  \left| \textsf{E}_{\xi}\exp(2\pi \textbf{i} \xi X_{w_{1}}s )+\textsf{E}_{\xi}\exp( 2 \pi \textbf{i} \xi X_{w_{2}}s)  \right|  \ge 2-2\pi \rho^{2}s^{2} \right\}.\nonumber
    \end{align}
    Consider $w_{1} \in Q_{1}$, $w_{2} \in Q_{2}$ and recall $X \in \Lambda_{n}$, we have $g_{1}$ and $g_{2}$ with $g_{1} \ge h+1$ and $g_{2} \le h-\rho-1$ such that 
    \begin{align}
        X_{w_{1}} \in \frac{1}{k}\mathbb{Z}\cap [h,g_{1}] \  \text{ and } \ X_{w_{2}} \in \frac{1}{k}\mathbb{Z}\cap [g_{2},h-\rho].\nonumber 
    \end{align}
    In the cases $g_{1} \le h+2\bar{\varepsilon}^{-1}$ and $g_{2} \ge h-\rho-2\bar{\varepsilon}^{-1}$, we have $a\rho \le |\xi||X_{w_{1}}-X_{w_{2}}| \le 
    \rho +4/\bar{\varepsilon}$, where $a:=\min_{i \le k}|a_{i}|$. Thus, we obtain, for $s \in (0,\frac{\bar{\varepsilon}}{2\rho \bar{\varepsilon}+8})$,
    \begin{equation}
        \begin{aligned}
            \left| \textsf{E}_{\xi}\exp(2\pi \textbf{i} \xi X_{w_{1}}s )+\textsf{E}_{\xi}\exp( 2 \pi \textbf{i} \xi X_{w_{2}}s) \right| 
            & \le \textsf{E}_{\xi}\left| 1+\exp(2\pi \textbf{i} \xi (X_{w_{1}}-X_{w_{2}})s ) \right|\\
            & \le 2\textsf{E}_{\xi}|\cos(\pi \textbf{i} \xi(X_{w_{1}}-X_{w_{2}})s )|\\
            & \le 2-2\pi a^{2} \rho^{2} s^{2}.\nonumber
        \end{aligned}
    \end{equation}
    In the case $g_{1} \ge h+2\bar{\varepsilon}^{-1}$ or $g_{2} \le h-\rho +2\bar{\varepsilon}^{-1}$. Without loss of generality, we assume that the first inequality holds. Fixing $X_{w_{2}}$, there exist $h_{1}$ and $h_{2}$ with $h_{2}-h_{1} \ge 2\bar{\varepsilon}^{-1}$ such that $X_{w_{1}}-X_{w_{2}} \in \frac{1}{k}\mathbb{Z}\cap [h_{1},h_{2}]$. Thus, applying Lemma \ref{Distance from integral} for $s \le\bar{ \varepsilon}$, we obtain 
    \begin{equation}
        \begin{aligned}
        & \textsf{P}\left( \left| \textsf{E}_{\xi}\exp(2\pi \textbf{i} \xi X_{w_{1}}s )+\textsf{E}_{\xi}\exp( 2 \pi \textbf{i} \xi X_{w_{2}}s) \right|  \ge  2-2\pi a^{2} \rho^{2} s^{2}  \right)\\
        &  \le \textsf{P}\left( \textsf{E}_{\xi}\mathrm{dist}\left( \xi s(X_{w_{1}}-X_{w_{2}}) ,\mathbb{Z} \right) \le s  \right)
        \le C_{\xi}\bar{\varepsilon}.\nonumber
    \end{aligned}
    \end{equation}
    Returning to the estimation of $\gamma_{i}(s)$, based on the above analysis, we further obtain
    \begin{align}
        \textsf{P}\left( \gamma_{i}(s) \ge 1-\frac{\pi a^{2} \rho^{2}\delta s^{2}}{2} \right)  \le \left( C_{\xi}\bar{\varepsilon} \right)^{\delta n/(4m)}.\nonumber
    \end{align}
    Moreover, for any $i \in [m]$, $f_{i}(s) \le 1$ and for any $i \in J$, $f_{i}(s)=\gamma_{i}(s)$ when $\gamma_{i}(s ) \ge 1/K_{2}$. Then set $z:=\min\left( \frac{\bar{\varepsilon}}{2\rho \bar{\varepsilon}+8},\left( \pi a^{2} \rho^{2} \delta  \right)^{-1/2} \right)$, for every $s \in [-z,z]$,
    \begin{equation}
        \begin{aligned}
            \textsf{P} &\left( f(s) \ge \left( 1- \pi a^{2} \rho^{2} \delta s^{2}/2 \right)^{|J|/2}  \right) \\
            & \le \textsf{P}\left( f_{i}(s) \ge 1-\pi a^{2} \rho^{2} \delta s^{2}/2 \text{ for at least } |J|/2 \text{ indices } i\in J  \right)\\
            &  \le \left( C_{\xi}\bar{\varepsilon} \right)^{c_{\delta}n},\nonumber
        \end{aligned}
    \end{equation}
    where $C_{\xi}>0$ depending only on $\xi$ and $c_{\delta} \in (0,1)$ depending only on $\delta$.\par 
    Next, applying the integral Markov inequality (Lemma \ref{Markov integral 1}) to \( f \), and then applying the discrete Markov inequality (Lemma \ref{Markov integral 2}) to the resulting integral, we finally obtain
    \begin{equation}
        \begin{aligned}
            \textsf{P} & \left( A_{nm}\sum_{S_{1},\dots,S_{m} \in \mathcal{J}'}\int_{-z}^{z}f(s)\mathrm{d}s \le \int_{-z}^{z}\left( 1-\frac{\pi a^{2}\rho^{2}\delta s^{2}}{2} \right)^{|J|/2} \mathrm{d}s+2m^{-1/2}  \right)\\
            & \ge 1- 2zm\left( C_{\xi}\bar{\varepsilon} \right)^{c_{\delta}n}.\nonumber
        \end{aligned}
    \end{equation}
    As the last step, recall Lemma \ref{Combinatorial estimate for Q1 and Q2}, we have $|\mathcal{J}'| \ge (1-e^{c_{\ref{Combinatorial estimate for Q1 and Q2}}n})|\mathcal{J}|$, furthermore, we obtain
    \begin{align}
        A_{nm}\sum_{S_{1},\dots,S_{m} \in \mathcal{J}\setminus \mathcal{J}'}\int_{-z}^{z}f(s)\mathrm{d}s \le 2ze^{-c_{\ref{Combinatorial estimate for Q1 and Q2}}n}.\nonumber
    \end{align}
    Combining the analyses from the two preceding parts, we ultimately obtain for $n$ is large enough: 
    \begin{align}
        \textsf{P}\left( A_{nm}\sum_{S_{1},\dots,S_{m} \in \mathcal{J}}\int_{-z}^{z}f(s)\mathrm{d}s  \le Cm^{-1/2} \right) \ge 1-2zm\left( C_{\xi}\bar{\varepsilon} \right)^{c_{\delta}n} \ge 1-\left( \varepsilon/2 \right)^{n},\nonumber
    \end{align}
    where $C>0$ depending on $\xi$, $\delta$ and $\rho$ and we set $\bar{\varepsilon}=(c_{\xi}\varepsilon)^{c_{\delta}}$. The result is then reached by multiplying $s$ by $m^{1/2}$.
\end{proof}
\begin{proof}[\textsf{Proof of Proposition \ref{Anti-concentration of U-degree in Lambda}}]
    We first fix the parameters mentioned in Proposition \ref{Anti-concentration of U-degree in Lambda}: fixing $\delta, \rho \in (0,1/4)$, a growth function $\textbf{g}(\cdot)$ satisfying \eqref{Condition g}, a permutation $\sigma \in \prod_{n}$, a number $h \in \mathbb{R}$, and two disjoint subsets $Q_{1},Q_{2} \subset [n]$ with cardinality $\left\lceil \delta n \right\rceil$. Finally, let $\varepsilon \in (0,1/4)$, $8 \le K_{2} \le 1/\varepsilon$, and $X$ be a random vector uniformly distributed on $\Lambda_{n}$.\par 
    Next, we determine the constants in the proof. Assume that $n$ is large enough. Set $l=l_{\ref{Small product}}(\varepsilon,S_{\xi})$, $z= 1/C_{\ref{Integration in small interval}}(\varepsilon,\delta,\rho,\xi)$, and $\varepsilon'=\varepsilon'(\varepsilon,\xi,z)$ is taken in lemma \ref{Moderately small product}. Finally, for $m ,k \in \mathbb{N}$, set $m \in [C_{\ref{Moderately small product}},n/\max(l, C_{\ref{Integration in small interval}})]$ satisfying $R_{\ref{Small product}}\sqrt{m}e^{-\sqrt{m}} \le 1$ and $1 \le k \le \min(2^{n/l}, \left( K_{2}/8 \right)^{m/8})$.\par 
    As the definition in the proof of Lemma \ref{Integration in small interval}, denote 
    \begin{align}
        f(s):=f_{S_{1},\dots,S_{m}}(s):=\prod_{i=1}^{m}\psi_{K_{2}}\left( \left| \textsf{E}\exp(2\pi \textbf{i} \xi X_{\eta[S_{i}]}m^{-1/2}s ) \right| \right)\nonumber
    \end{align}
    for $S_{1},\dots,S_{m} \in \mathcal{J}$.\par 
    Note that 
    \begin{equation}
        \begin{aligned}
            A_{nm} & \sum_{S_{1},\dots,S_{m}}\int_{-\varepsilon'm^{1/2}k}^{\varepsilon'm^{1/2}k}f(s)\mathrm{d}s \\
            & = A_{nm}\sum_{S_{1},\dots,S_{m}}\int_{-zm^{1/2}}^{zm^{1/2}}f(s)\mathrm{d}s +2A_{nm}\sum_{S_{1},\dots,S_{m}}\int_{z\sqrt{m}}^{\varepsilon'\sqrt{m}k}f(s)\mathrm{d}s.\nonumber
        \end{aligned}
    \end{equation}
    Applying Lemma \ref{Integration in small interval}, the first summand can be bounded by $K_{\ref{Integration in small interval}}$ with probability at least $1-\left( \varepsilon/2 \right)^{n}$.\par 
    Otherwise, for the second summand, we combine Lemma \ref{Small product} and \ref{Moderately small product}. On the one hand, by Lemma \ref{Small product}, the function $f$ is bounded by $\left( K_{2}/4 \right)^{-m/2}$ on $[0,k\sqrt{m}/2]$ except for some set of measures at most $R_{\ref{Small product}}\sqrt{m}$ with probability at least $1-\left( \varepsilon/2 \right)^{n}$. On the other hand, by Lemma \ref{Moderately small product}, the function $f$ is bounded by $e^{-\sqrt{m}}$ on $[z\sqrt{m},\varepsilon'\sqrt{m}k]$ with probability at least $1-\left( \varepsilon/2 \right)^{n}$. Furthermore, with probability at least $1-2\left( \varepsilon/2 \right)^{n}$,
    \begin{align}
        \int_{z\sqrt{m}}^{\varepsilon'k\sqrt{m}}f(s)\mathrm{d}s \le k\sqrt{m}\left( K_{2}/4 \right)^{-m/2}+R_{\ref{Small product}}\sqrt{m}e^{-\sqrt{m}} \le 2.\nonumber
    \end{align}
    Thus, by Lemma \ref{Markov integral 2}, we have 
    \begin{align}
        A_{nm}\sum_{S_{1},\dots,S_{m}}\int_{z\sqrt{m}}^{\varepsilon'k\sqrt{m}}f(s)\mathrm{d}s \le 3.\nonumber
    \end{align}
    Thus, we complete the proof of this proposition.
\end{proof}
\begin{myrem}
The proof of Proposition \ref{Anti-concentration of U-degree in Lambda} shows the main difficulties in analyzing the anti-concentration of the RUD. Because RUD involves averaging over many partitions and integrating products of truncated Fourier coefficients, direct estimates are not feasible. We therefore split the integration interval into a central part and two edge parts.

In the edge intervals, we prove that for most partitions, the product of truncated characteristic functions is exponentially small. This uses combinatorial control of the index sets together with the lattice-type anti-concentration estimate in the Lemma \ref{Distance from integral}, which only requires finite fourth moments and thus extends the U-degree method beyond Bernoulli variables.

In the central region, where the pointwise bounds are too weak, we show that the full integral remains uniformly small. Combining both arguments yields a strong small-ball estimate for RUD, confirming that it retains the stability properties of LCD and U-degree while applying to much broader discrete distributions.
\end{myrem}
\subsection{Unstructured vectors almost have large RUD}\label{Unstructured vectors have almost large U-degree section}
In this section, we first introduce two properties of the RUD and then derive the final result. The proofs of the first two claims rely solely on simple properties of expectation and similar to the proof in Section 4 in \cite{LT_duke}, so we shall omit them.
\begin{mypropo}[Lower bound on the RUD]\label{Lower bound of U degree}
    For any $r, \delta, \rho \in (0,1)$ there exists $C_{\ref{Lower bound of U degree}}=C_{\ref{Lower bound of U degree}}(r,\delta,\rho,\xi)>0$ such that the following holds. Let $K_{2} \ge 2$, $1 \le m\le n/C_{\ref{Lower bound of U degree}}$, $K_{1} \ge C_{\ref{Lower bound of U degree}}$, and let $X \in \mathcal{V}_{n}$. Then,
    \begin{align}
        \mathrm{UD}_{n}^{\xi}(x,m,K_{1},K_{2}) \ge \sqrt{m}.\nonumber
    \end{align}
\end{mypropo}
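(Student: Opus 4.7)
The proposition asks that $\mathrm{UD}_n^\xi(X,m,K_1,K_2)\ge\sqrt m$ for every $X\in\mathcal V_n$, which by Definition \ref{Def: U-degree} means that, setting $\gamma_i(u):=|\textsf E\exp(2\pi\textbf{i} X_{\eta[S_i]}\xi u)|$, the integral
\[
J:=A_{nm}\sum_{S_1,\dots,S_m}\int_{-\sqrt m}^{\sqrt m}\prod_{i=1}^m\psi_{K_2}\bigl(\gamma_i(m^{-1/2}s)\bigr)\,\mathrm ds
\]
must not exceed $K_1$. After the substitution $u=m^{-1/2}s$, this is equivalent to showing that $\sqrt m\cdot A_{nm}\sum\int_{-1}^1\prod_i\psi_{K_2}(\gamma_i(u))\,\mathrm du\le K_1$. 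My plan is to run the deterministic analogue of the arguments behind Lemmas \ref{Small product}--\ref{Integration in small interval} and Proposition \ref{Anti-concentration of U-degree in Lambda}, with the structural hypothesis $X\in\mathcal V_n$ playing the role that was previously played by random sampling on $\Lambda_n$, so every ``with high probability'' statement becomes pointwise.

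Let $Q_1,Q_2\subset[n]$ be the disjoint subsets of size $\ge\delta n$ with $\max_{Q_2}X_i\le\min_{Q_1}X_i-\rho$ guaranteed by the nonconstant property of $\mathcal V_n$. By Lemma \ref{Combinatorial estimate for Q1 and Q2}, outside an $A_{nm}^{-1}e^{-cn}$-fraction of tuples, every $(S_1,\dots,S_m)$ has at least $c_0 m$ ``good'' indices $i$ with $\min(|S_i\cap Q_1|,|S_i\cap Q_2|)\ge\delta\lfloor n/m\rfloor/2$; the bad tuples contribute at most $2\sqrt m\, e^{-cn}\ll K_1$ to $J$ once $n\ge n(r,\delta,\rho,\xi)$. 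For a good tuple and a good index $i$, I pair up $n_0\asymp n/m$ entries of $S_i\cap Q_1$ with those of $S_i\cap Q_2$. The identity $e^{\textbf{i} A}+e^{\textbf{i} B}=2e^{\textbf{i}(A+B)/2}\cos((A-B)/2)$ applied with $A=2\pi X_{w_1}\xi u$, $B=2\pi X_{w_2}\xi u$, combined with $|X_{w_1}-X_{w_2}|\ge\rho$, gives the pair bound
\[
\gamma_i(u)\le 1-\frac{2n_0}{|S_i|}\bigl(1-\bar h_i(u)\bigr),\qquad \bar h_i(u):=\frac{1}{n_0}\sum_{j=1}^{n_0}\textsf E_\xi|\cos(\pi y_j\xi u)|,
\]
with all $|y_j|\ge\rho$.

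Gradualness controls the magnitudes: fewer than $n/\textbf{g}^{-1}(K)<\delta n/4$ indices satisfy $|X_w|>K$ for a constant $K=K(r,\delta,\textbf g)$, so one may restrict the pairing to indices with $|X_w|\le K$, forcing $|y_j|\le 2K$. On the central range $|u|\le u_0:=1/(4K\max_\ell|a_\ell|)$, the elementary estimate $|\cos\theta|\le 1-c\theta^2$ for $|\theta|\le\pi/2$ yields $\bar h_i(u)\le 1-c_1(\xi,\rho)u^2$ and hence $\gamma_i(u)\le 1-c_2 u^2$ for every good $i$. Multiplying over the $c_0 m$ good indices and using $\psi_{K_2}\le 1$ on the remaining factors gives $\prod_i\psi_{K_2}(\gamma_i(u))\le\exp(-c_3 m u^2)$ on good tuples, and Gaussian integration yields $\int_{|u|\le u_0}\prod_i\psi_{K_2}(\gamma_i(u))\,\mathrm du\le C/\sqrt m$, whose contribution to $J$ after multiplication by $\sqrt m$ is $O(1)$.

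The principal obstacle is the outer range $u_0\le|u|\le 1$, where the Taylor bound on $|\cos|$ is no longer valid and $\bar h_i(u)$ can approach $1$ at the arithmetic resonances where $y_j a_\ell u$ is nearly an integer for many $(j,\ell)$ simultaneously. Here I would exploit the truncation $\psi_{K_2}(\gamma)\le\max(\gamma,1/K_2)$: on the ``non-resonant'' set where $\bar h_i(u)\le 1-\delta_0$ holds with a fixed $\delta_0=\delta_0(\xi,\rho)>0$ for each good $i$, one has $\gamma_i(u)\le 1-c_0\delta_0$, whence $\prod_i\psi_{K_2}(\gamma_i(u))\le(1-c_0\delta_0)^{c_0 m}$ integrates to $o(1/\sqrt m)$. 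On the complementary resonant set, the simultaneous constraints $y_j a_\ell u\approx k_{j,\ell}\in\mathbb Z$ across $\Omega(m)$ pairs, combined with $|y_j|\le 2K$ and the finiteness of the support $\{a_1,\dots,a_L\}$ of $\xi$, confine $u$ to a union of at most $C(\xi,\rho,K)$ short intervals of total length $O(1/\sqrt m)$, on which the integrand is trivially bounded by $1$. Making this resonance count quantitative for a general discrete $\xi$, beyond the $\pm1$ case of \cite{LT_duke}, is the delicate step; once established, summing the three pieces yields $J\le K_1$ for $K_1\ge C(r,\delta,\rho,\xi)$, completing the proof.
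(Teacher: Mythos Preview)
The paper omits this proof entirely, deferring to Section~4 of \cite{LT_duke} and saying only that it uses ``simple properties of expectation''. Your reconstruction captures the correct architecture: discard the $e^{-cn}$-fraction of bad tuples via Lemma~\ref{Combinatorial estimate for Q1 and Q2}, pair indices of $Q_1\cap S_i$ with indices of $Q_2\cap S_i$ inside each good block, restrict to pairs with $|X_w|\le K$ using gradualness, and on the central window $|u|\le u_0$ integrate the resulting Gaussian $\exp(-c m u^2)$. That part is complete and matches the deterministic version of the argument behind Lemma~\ref{Integration in small interval}.

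Your declared worry---that the outer range $u_0\le|u|\le 1$ is more delicate for a general discrete $\xi$ than for the $\pm1$ case of \cite{LT_duke}---is misplaced. Fix any atom $a_{\ell_0}\ne 0$ of $\xi$ with mass $p_{\ell_0}>0$; then for every $y$ and $u$,
\[
1-\textsf E_\xi|\cos(\pi y\xi u)|\;\ge\;p_{\ell_0}\bigl(1-|\cos(\pi y a_{\ell_0}u)|\bigr)\;\ge\;c_\xi\,\mathrm{dist}^2\bigl(y a_{\ell_0}u,\mathbb Z\bigr),
\]
which collapses the problem to a rescaled single-value case identical to \cite{LT_duke}. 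No new arithmetic enters.

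What you genuinely leave open is the outer-range bound itself, and your sketch of the resonance count is not quite an argument. The clean way to close it is: from the pairing one has $\prod_{\text{good }i}\gamma_i(u)\le\exp\bigl(-c\,N_0\,\bar d(u)\bigr)$ where $N_0\ge c_0 m$ is the number of good blocks and $\bar d(u)$ is the average of $\mathrm{dist}^2(y_j^{(i)}a_{\ell_0}u,\mathbb Z)$ over all pairs $(i,j)$. By the AM--GM inequality,
\[
\exp\bigl(-cN_0\bar d(u)\bigr)\;\le\;\frac{1}{N_{\mathrm{tot}}}\sum_{(i,j)}\exp\bigl(-cN_0\,\mathrm{dist}^2(y_j^{(i)}a_{\ell_0}u,\mathbb Z)\bigr),
\]
and for each fixed pair the substitution $v=y_j^{(i)}a_{\ell_0}u$ turns the $u$-integral over $[u_0,1]$ into at most $O(K)$ copies of $\int_0^{1/2}e^{-cN_0 v^2}\,dv=O(1/\sqrt m)$. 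Averaging gives $\int_{u_0}^1\prod_i\gamma_i\,du\le C(K,\xi,\rho)/\sqrt m$, and multiplying by $\sqrt m$ contributes $O(1)$ to $J$, absorbed into $K_1$. With this step filled in your proof is complete and coincides with the approach of \cite{LT_duke}.
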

\begin{mypropo}[Stability of the RUD]\label{Stability of the U degree}
    For any $K_{2} \ge 1$ there exist $c_{\ref{Stability of the U degree}}$ and $c_{\ref{Stability of the U degree}}'$ depending on $K_{2}$ and $\xi$ such that the following holds. Let $K_{1} \ge 1$, $v \in \mathbb{R}^{n}$, $m \le n/2$ and $\mathrm{UD}_{n}^{\xi}(v,m,K_{1},K_{2}) \le c_{\ref{Stability of the U degree}}'k$. Then there are $y \in \frac{1}{k}\mathbb{Z}^{n}$ such that $\Vert v-y \Vert_{\infty} \le \frac{1}{k}$ and satisfying
    \begin{align}
        \mathrm{UD}_{n}^{\xi}(y,m,c_{\ref{Stability of the U degree}}K_{1},K_{2}) \le \mathrm{UD}_{n}^{\xi}(v,m,K_{1},K_{2}) \le \mathrm{UD}_{n}^{\xi}\left( y,m,c_{\ref{Stability of the U degree}}^{-1}K_{1},K_{2} \right).\nonumber
    \end{align}
\end{mypropo}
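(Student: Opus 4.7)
The plan is to take $y$ as the coordinate-wise rounding of $v$ to the nearest multiple of $1/k$, so that $y \in \frac{1}{k}\mathbb{Z}^{n}$ and $\Vert v - y\Vert_{\infty} \le 1/(2k) \le 1/k$. With this choice I will establish both inequalities in the proposition by showing that the integrands appearing in Definition \ref{Def: U-degree} are pointwise multiplicatively comparable on the relevant range of $s$, with constants depending only on $K_{2}$ and $\xi$. Once this comparability is in place, both inequalities reduce to the monotonicity of the defining integral in the threshold parameter.

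For each disjoint tuple $(S_{1},\dots,S_{m})$ and each $i \in [m]$, I would set
\begin{align*}
f_{i}^{u}(s) := \psi_{K_{2}}\left(\left| \tfrac{1}{\lfloor n/m\rfloor}\sum_{w \in S_{i}} \textsf{E}\exp\left( 2\pi \textbf{i}\, u_{w}\xi s/\sqrt{m}\right) \right|\right)
\end{align*}
for $u \in \{v, y\}$, and $f^{u}(s) := \prod_{i=1}^{m} f_{i}^{u}(s)$. Using $|e^{2\pi \textbf{i}\alpha} - e^{2\pi \textbf{i}\beta}| \le 2\pi|\alpha - \beta|$ together with $|\xi|\le 1$ and $|v_{w} - y_{w}| \le 1/(2k)$, I would bound the characteristic-function difference by $\pi|s|/(k\sqrt{m})$; averaging over $w \in S_{i}$ and applying the $1$-Lipschitz property of $\psi_{K_{2}}$ then gives $|f_{i}^{v}(s) - f_{i}^{y}(s)| \le \pi|s|/(k\sqrt{m})$. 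Since $\psi_{K_{2}} \ge 1/(2K_{2})$ everywhere, this yields $f_{i}^{v}(s)/f_{i}^{y}(s) \le 1 + 2\pi K_{2}|s|/(k\sqrt{m})$ (and the symmetric bound on the reciprocal), so after taking the product over $i\in[m]$,
\begin{align*}
f^{v}(s)/f^{y}(s) \le \exp\left( 2\pi K_{2} \sqrt{m}\,|s|/k\right),
\end{align*}
and analogously for $f^{y}(s)/f^{v}(s)$.

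Given pointwise comparability $f^{v}/f^{y} \in [c_{\ref{Stability of the U degree}}, c_{\ref{Stability of the U degree}}^{-1}]$ on $[-c_{\ref{Stability of the U degree}}'k, c_{\ref{Stability of the U degree}}'k]$, both claimed inequalities follow at once. Setting $t := \mathrm{UD}_{n}^{\xi}(v, m, K_{1}, K_{2}) \le c_{\ref{Stability of the U degree}}'k$ and integrating on $[-t, t]$ yields
\begin{align*}
A_{nm}\sum_{S_{1},\dots,S_{m}}\int_{-t}^{t} f^{y}(s)\,\mathrm{d}s \le c_{\ref{Stability of the U degree}}^{-1} K_{1},
\end{align*}
whence $\mathrm{UD}_{n}^{\xi}(y, m, c_{\ref{Stability of the U degree}}^{-1}K_{1}, K_{2}) \ge t$; the left inequality in the proposition is symmetric.

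The hard part is controlling the product step: the bound $\exp(CK_{2}\sqrt{m}\,|s|/k)$ evaluated at $|s| = c_{\ref{Stability of the U degree}}'k$ equals $\exp(CK_{2}c_{\ref{Stability of the U degree}}'\sqrt{m})$, which is not uniformly bounded in $m$ for a fixed choice of $c_{\ref{Stability of the U degree}}'$. I would address this either by letting $c_{\ref{Stability of the U degree}}'$ implicitly absorb the $\sqrt{m}$ factor (which is harmless because in every application of the proposition in Section \ref{Unstructured vectors section} the parameter $m$ is fixed at the outset), or by splitting $[-c_{\ref{Stability of the U degree}}'k, c_{\ref{Stability of the U degree}}'k]$ into a central subinterval, on which the multiplicative error is a genuine constant, and tail subintervals, on which Lemmas \ref{Small product} and \ref{Moderately small product} force $f^{v}$ and $f^{y}$ both to be so small that their contribution to the integral is negligible regardless of the pointwise ratio. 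Either resolution yields constants $c_{\ref{Stability of the U degree}}$ and $c_{\ref{Stability of the U degree}}'$ depending only on $K_{2}$ and $\xi$, as asserted.
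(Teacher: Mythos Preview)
The paper does not supply its own proof of this proposition; it simply states that the argument ``relies solely on simple properties of expectation and [is] similar to the proof in Section~4 in \cite{LT_duke}.'' Your approach---round $v$ coordinatewise to $y\in\frac{1}{k}\mathbb{Z}^{n}$ and compare the integrands $f^{v},f^{y}$ via the $1$-Lipschitz property of $\psi_{K_{2}}$ together with the lower bound $\psi_{K_{2}}\ge 1/(2K_{2})$---is exactly this standard route.

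You are right to flag the $\sqrt{m}$ dependence as the delicate point, but neither of your proposed fixes actually delivers constants depending only on $K_{2}$ and $\xi$. Option~(b) is not available here: Lemmas~\ref{Small product} and~\ref{Moderately small product} are probabilistic statements about a \emph{random} vector uniformly distributed on $\Lambda_{n}$, and say nothing about a fixed deterministic $v$ or its rounding $y$. Option~(a), letting $c_{\ref{Stability of the U degree}}'$ absorb the $\sqrt{m}$, is what one is forced to do, but then the constant depends on $m$, contrary to the literal statement. In the only place the proposition is invoked (the proof of Theorem~\ref{Large U degree}) this is harmless, since $m$ ranges over $[pn/8,8pn]$ and the extra factor can be absorbed into the choice of $k$ there; but as a standalone statement the proposition, as worded, appears to carry a minor imprecision. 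The honest repair is either to allow $c_{\ref{Stability of the U degree}}'$ to depend on $m$, or to replace the hypothesis $\mathrm{UD}_{n}^{\xi}(v,m,K_{1},K_{2})\le c_{\ref{Stability of the U degree}}'k$ by $\mathrm{UD}_{n}^{\xi}(v,m,K_{1},K_{2})\le c_{\ref{Stability of the U degree}}'k/\sqrt{m}$, after which your Lipschitz argument goes through cleanly.
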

Finally, we present the main theorem of this section.
\begin{mytheo}\label{Large U degree}
    Let $r, \delta, \rho \in (0,1)$, $s >0 $. $R \ge 1$, and let $K_{3} \ge 1$. Let $\xi$ be a random variable satisfying \eqref{Condition}. Then there exist $n_{\ref{Large U degree}} \in \mathbb{N}$, $C_{\ref{Large U degree}}\ge 1$ and $K_{1} \ge 1$, $K_{2} \ge 4$ depending on $r, \delta,\rho, R,s ,K_{3}$ and $\xi$ such that the following holds. Let $n \ge n_{\ref{Large U degree}}$, $p \le C_{\ref{Large U degree}}^{-1}$, and $s\log n \le pn$. Let $\textbf{g}$ be a growth function satisfying \eqref{Condition g}. Assume that $M_{n}$ is an $n \times n$ random matrix from Theorem \ref{Theorem A}. Then with probability at least $1- \exp(Rpn)$ one has
    \begin{equation}
        \begin{aligned}
            \big\{ & \text{Set of normal vectors to } \mathrm{C}_{2}(M_{n}),\dots,\mathrm{C}_{n}(M_{n})  \big\}\cap \mathcal{V}_{n}(r,\textbf{g},\delta,\rho)\\
             \subset & \big\{ x \in \mathbb{R}^{n}:x_{\left\lfloor rn \right\rfloor}^{*}=1,\mathrm{UD}_{n}^{\xi}(x,m,K_{1},K_{2}) \ge \exp(Rpn) \\
            & \text{ for all } pn/8 \le m \le 8pn \big\}.\nonumber
        \end{aligned}
    \end{equation}
\end{mytheo}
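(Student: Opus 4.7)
Set $K:=\exp(Rpn)$. The complement of the conclusion is the event that some $x\in\mathcal{V}_n(r,\mathbf{g},\delta,\rho)$ with $x_{\lfloor rn\rfloor}^*=1$ is orthogonal to $\mathrm{C}_2(M_n),\dots,\mathrm{C}_n(M_n)$ while satisfying $\mathrm{UD}_n^{\xi}(x,m,K_1,K_2)\le K$ for some integer $m\in[pn/8,8pn]$. A union bound over the $O(pn)$ choices of $m$ reduces the task to showing, for each fixed $m$, that the analogous event $B_m$ has probability at most $\exp(-(R+1)pn)$.

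\textbf{Building the bad net.} For this fixed $m$, I pass from $x$ to a lattice point $y\in k^{-1}\mathbb{Z}^n$ with $\|x-y\|_\infty\le 1/k$ and $\mathrm{UD}_n^{\xi}(y,m,K_1',K_2)\le K$ via Proposition \ref{Stability of the U degree}; the scale $k$ is taken only polynomially larger than $K$ so that the inner-product perturbation $|\langle \mathrm{C}_j(M_n),x-y\rangle|$ is at most $\sqrt{m}/K$ on the spectral-norm event of Lemma \ref{Spectral norm}. Lemma \ref{Lambda approximation} then covers such $y$ by $\exp(O(n))\cdot\mathbf{g}(6n)$ sets $\Lambda_n$, and Proposition \ref{Anti-concentration of U-degree in Lambda} says that within each $\Lambda_n$ only an $\varepsilon^n$ fraction of points realises small RUD (with $\varepsilon$ chosen small depending on $R$ and $\xi$). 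Consequently the bad net $\mathcal{N}_m$ has cardinality at most $(C\varepsilon k)^n$.

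\textbf{Per-vector anti-concentration and closing.} Conditioning on the event of Lemma \ref{pn/8-8pn} that every column support size $m_j:=|\mathrm{supp}\,\mathrm{C}_j(M_n)|$ lies in $[pn/8,8pn]$ (failure costs at most $ne^{-cpn}$), the $n-1$ columns are independent and each, given $m_j$, is distributed exactly as the vector $Y$ in Theorem \ref{Small ball prob via UD}. That theorem bounds each $\textsf{P}(|\langle \mathrm{C}_j,y\rangle|\le\sqrt{m}/K)$ by $C(1/K+\mathrm{UD}_n^{\xi}(y,m_j,K_1',K_2)^{-1})$, and using the UD-smallness at our reference $m$ together with the column-support conditioning produces a per-column bound of $C/K$. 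By independence the joint probability is $(C/K)^{n-1}$, so that
\begin{align*}
\textsf{P}(B_m)\le (C\varepsilon k)^n(C/K)^{n-1}+ne^{-cpn}.
\end{align*}
Since $k$ exceeds $K$ only polynomially, choosing $\varepsilon$ small enough relative to $R$ (as Proposition \ref{Anti-concentration of U-degree in Lambda} permits) pushes the first summand below $\exp(-(R+1)pn)$, finishing the union bound over $m$.

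\textbf{Main obstacle.} The essential delicacy is propagating UD-smallness at the single reference $m$ to per-column anti-concentration bounds that naturally want $\mathrm{UD}_n^{\xi}(y,m_j,\cdot)$ at the random support $m_j$. I expect to handle this either by enlarging the filtered class to record UD-smallness at every $m$ in $[pn/8,8pn]$ (a benign $O(pn)$-factor in the combinatorial count) or, more efficiently, by noting that in the RUD definition the dependence on $m$ enters only through the rescaling $m^{-1/2}s$ and the combinatorial factor $A_{nm}$, making the RUD quasi-stable across a constant multiplicative range of $m$ and so letting single-$m$ control propagate. The secondary pressure point is the balancing of $k$, $\varepsilon$, $K_1$, $K_2$: the net bound $(C\varepsilon k)^n$ must be strictly dominated by the anti-concentration savings $(C/K)^{n-1}$, which (since $k$ is only polynomially above $K$) forces $\varepsilon$ to be exponentially small in $R$ --- exactly the regime permitted by Proposition \ref{Anti-concentration of U-degree in Lambda}.
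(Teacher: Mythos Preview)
Your proposal contains a genuine logical gap in the anti-concentration step. You write that ``using the UD-smallness at our reference $m$ \dots\ produces a per-column bound of $C/K$'', but Theorem~\ref{Small ball prob via UD} gives $\mathcal{L}(\langle \mathrm{C}_j,y\rangle,\sqrt{m_j}\tau)\le C_{\ref{Small ball prob via UD}}(\tau+\mathrm{UD}_n^\xi(y,m_j,\cdot)^{-1})$, so a \emph{small} concentration probability requires $\mathrm{UD}$ to be \emph{large}. On your bad net $\mathcal{N}_m$ you have only recorded that $\mathrm{UD}$ is \emph{at most} $K$; this yields $1/\mathrm{UD}\ge 1/K$, i.e.\ a lower bound on the concentration function, not an upper bound. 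Without a matching lower bound on $\mathrm{UD}$ the per-column factor could be as bad as $C/\sqrt{pn}$ (from Proposition~\ref{Lower bound of U degree}), and then $(C\varepsilon k)^n(C/\sqrt{pn})^{n-1}$ with $k\sim K=\exp(Rpn)$ is enormous. The fixes you propose in your ``Main obstacle'' paragraph (recording UD-smallness at every $m$, or quasi-stability in $m$) do not help, because they still only transfer the \emph{upper} bound on $\mathrm{UD}$.

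The paper supplies exactly the missing ingredient via a dyadic decomposition of the RUD range. One sets $\mathrm{UD}_n(x):=\min_{pn/8\le m\le 8pn}\mathrm{UD}_n^\xi(x,m,K_1,K_2)$ and slices the bad event into $S_D:=\{x\in\mathcal{V}_n:\mathrm{UD}_n(x)\in[D,2D]\}$ for dyadic $D\in[\sqrt{pn/8},\exp(Rpn)]$. For $x\in S_D$ one simultaneously has $\mathrm{UD}_n^\xi(x,\mathbf m,\cdot)\le 2D$ at some $\mathbf m$ (this, after stability at scale $k\sim D$ and Proposition~\ref{Anti-concentration of U-degree in Lambda}, bounds the bad net by $(C\varepsilon k)^n$) \emph{and} $\mathrm{UD}_n^\xi(x,m,\cdot)\ge D$ at \emph{every} $m\in[pn/8,8pn]$ (this gives the per-column bound $C/D$ via Theorem~\ref{Small ball prob via UD}). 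Only because the net scale and the anti-concentration scale are tied to the same $D$ does the product $(C\varepsilon D)^n(C/D)^{|I|}$ close.

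There is a second, more technical, gap: you condition on \emph{every} column having support in $[pn/8,8pn]$, but by Lemma~\ref{pn/8-8pn} this fails with probability of order $n(1-p)^{n/2}\approx ne^{-pn/2}$, which already exceeds the target $e^{-Rpn}$ for any $R\ge 1$. The paper instead invokes Lemma~\ref{all columns concentration} to allow up to $b_0=\lfloor(2pR)^{-1}\rfloor$ bad columns, partitions over admissible index sets $I$ of size $\ge n-b_0-1$, and pays only $(C/D)^{|I|}$; the shortfall $D^{n-|I|}\le D^{b_0+1}\le e^{O(n)}$ is then absorbed by choosing $\varepsilon$ small.
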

\begin{proof}
    We start by determining the constants. Assume that $n $ is large enough. Fix $R \ge 1$, $r,\delta,\rho \in (0,1)$, $s >0$ and set $b_{0}:=\left\lfloor (2pR)^{-1}\right\rfloor$. Let $K_{2}= 32\exp(16R)$. Note that $\textbf{g}(6\cdot)$ is a growth function for $K_{3}'=K_{3}^{8}$ and $\textbf{g}(6n) \le K_{3}^{n}$.\par
    Furthermore, we denote
    \begin{equation}
      \begin{aligned}
     & C_{\ref{Spectral norm}}:=C_{\ref{Spectral norm}}(3R,\xi), \ \ C_{\ref{all columns concentration}}:= C_{\ref{all columns concentration}}(3R), \ \ c_{\ref{Stability of the U degree}}':=c_{\ref{Stability of the U degree}}'(K_{2},\xi),\\
    & c_{\ref{Stability of the U degree}}:=c_{\ref{Stability of the U degree}}(K_{2},\xi), \ \ C_{\ref{Size of Lambda approximation}}:= C_{\ref{Size of Lambda approximation}}(K_{3}').\nonumber
     \end{aligned}
    \end{equation}
    Next, we set $K_{1}$ is large enough, $pn$ is large enough, and $p $ is small enough such that the statement used in the proof below for our $K_{1}$, $p$ and $n$.\par 
    Finally, let $\varepsilon \le 1/K_{2}$ will be chosen later. For convenience, denote
    \begin{align}
        \mathrm{UD}_{n}(X):=\min_{pn/8 \le m \le 8pn}\mathrm{UD}_{n}^{\xi}(X,m,K_{1},K_{2}).\nonumber
    \end{align}
    Set $H_{1}^{\perp}$ is the set of normal vectors for $\mathrm{C}_{2}(M_{n}),\dots,\mathrm{C}_{n}(M_{n})$. Note that the conclusion is trivially true if $\mathcal{V}_{n} \cap H_{1}^{\perp}=\emptyset$; therefore, we assume $\mathcal{V}_{n}\cap H_{1}^{\perp} \ne \emptyset$. Thus, to prove this theorem it is sufficient to show that
    \begin{align}
        \textsf{P}\left( \exists X \in \mathcal{V}_{n}\cap H_{1}^{\perp}:\mathrm{UD}_{n}(X) \le \exp(Rpn )  \right) \le \exp(-Rpn).\nonumber
    \end{align}
    Applying Lemma \ref{Lower bound of U degree} for $X \in \mathcal{V}_{n}$, we have $\mathrm{UD}_{n}(X) \ge \sqrt{pn/8}$. As the first step, we split the interval $[\sqrt{pn/8},\exp(Rpn) ]$. Set $D \in [\sqrt{pn/8},\exp(Rpn)/2 ]$ and denote
    \begin{align}
        S_{D}:=\left\{ x \in \mathcal{V}_{n}:\mathrm{UD}_{n}(x) \in [D,2D]  \right\}.\nonumber
    \end{align}
    We return to prove that 
    \begin{align}
        \textsf{P}\left( \exists X \in S_{D}\cap H_{1}^{\perp} \right) \le \exp(-2Rpn).\nonumber
    \end{align}
    As the second step, we now give some definitions for events. We say a subset $I \subset [n]$ is admissible if $1 \notin I$ and $|I| \ge n-b_{0}-1$. Then, the integer number $B_{i}$ for $\mathrm{C}_{i}(M_{n})$ is defined by
    \begin{align}
        B_{i}=\left| \{ j\in [n]: \eta_{ij}=b  \} \right|.\nonumber
    \end{align}
    Furthermore, we consider the events $\mathcal{E}_{I}$ for $I$ is admissible are denote by 
    \begin{align}
        \mathcal{E}_{I}:=\big\{ \forall i \in I:  B_{i} \in [pn/8,8pn] \text{ and } \forall i \notin I:B_{i} \notin [pn/8,8pn] \big\}.\nonumber
    \end{align}
    At the same time, we also need to denot
    \begin{align}
        \mathcal{E}_{0}:=\left\{ \Vert M_{n}-\textsf{E}M_{n}\Vert_{2} \le C_{\ref{Spectral norm}}\sqrt{pn}   \right\}.\nonumber
    \end{align}
    Applying Lemma \ref{Spectral norm}, we have $\textsf{P}\left( \mathcal{E}_{0} \right) \ge 1-\exp(-3Rpn)$. By Lemma \ref{all columns concentration}
    \begin{align}
        \textsf{P}\left(  \bigcup_{I}\mathcal{E}_{I} \right) \ge 1- \exp(-n/C_{\ref{all columns concentration}}) \ge 1-\exp(3Rpn).\nonumber
    \end{align}
    Denote by $\ell $ the collection of all admissible $I$ satisfying $2 \textsf{P}\left( \mathcal{E}_{I}\cap \mathcal{E}_{0} \right) \ge \textsf{P}\left( \mathcal{E}_{I} \right)$.By this definition, we have
    \begin{align}
        \textsf{P}\left( \bigcup_{I \in \ell}\mathcal{E}_{I}  \right) \ge 1-\exp(-3Rpn)-2\textsf{P}\left(\mathcal{E}_{0}^{c} \right) \ge 1- 3\exp(-3Rpn).\nonumber
    \end{align}
    Furthermore,
    \begin{equation}
        \begin{aligned}
        \textsf{P}\left( \exists X \in S_{D}\cap H_{1}^{\perp} \right) & \le \sum_{I \in \ell}\textsf{P}\left(\{ \exists X \in S_{D}\cap H_{1}^{\perp}\} \cap \mathcal{E}_{I}\cap\mathcal{E}_{0} \right)+4\exp(-3Rpn)\\
        & \le  \sum_{I \in \ell}\textsf{P}\left( \exists X \in S_{D}\cap H_{1}^{\perp}\big| \mathcal{E}_{I}\cap\mathcal{E}_{0} \right)\textsf{P}\left(  \mathcal{E}_{I}\cap\mathcal{E}_{0} \right)+4\exp(-3Rpn).\nonumber
    \end{aligned}
    \end{equation}
    Combining $\sum_{I \in \ell}\textsf{P}\left( \mathcal{E}_{I}\cap\mathcal{E}_{0} \right) \le 1$, it is sufficient to show that for all $I \in \ell$
    \begin{align}
        \textsf{P}\left( \exists X \in S_{D}\cap H_{1}^{\perp}\big| \mathcal{E}_{I}\cap\mathcal{E}_{0} \right) \le \exp(-3Rpn).\nonumber
    \end{align}
    As the third step, for all $I \in \ell$, denote by $M_{I}$ the $|I|\times n$ matrix obtained by transposing columns $\mathrm{C}_{i}(M_{n})$, $i \in I$ and $M_{I}^{(0)}=\textsf{E}M_{I}=(p\textsf{E}\xi+b)\textbf{1}^{T}\textbf{1}$.\par 
    We now denote $\mathcal{E}_{D,I}$ by 
    \begin{align}
        \mathcal{E}_{D,I}:= \{ \exists X\in S_{D}\cap H_{1}^{\perp} \} \cap \mathcal{E}_{I}\cap \mathcal{E}_{0}.\nonumber
    \end{align}
    Set $k:=\left\lceil 2D/c_{\ref{Stability of the U degree}}' \right\rceil$ and $\textbf{m} : \mathcal{E}_{D,I} \to [pn/8,8pn]$ be a random integer such that
    \begin{align}
        \mathrm{UD}_{n}^{\xi}(X,\textbf{m},K_{1},K_{2}) \in [D,2D] \ \text{ everywhere on } \mathcal{E}_{D,I}.\nonumber
    \end{align}
    Applying Proposition \ref{Stability of the U degree}, there exist $Y:\mathcal{E}_{D,I} \to \frac{1}{k}\mathbb{Z}^{n}$ such that
    \begin{itemize}
        \item $\Vert Y- X\Vert_{\infty} \le 1/k$ on $\mathcal{E}_{D,I}$.
        \item $\mathrm{UD}_{n}^{\xi}(Y,\textbf{m},c_{\ref{Stability of the U degree}}K_{1},K_{2}) \le 2D$ on $\mathcal{E}_{D,I}$.
        \item $\mathrm{UD}_{n}^{\xi}(Y,m,c_{\ref{Stability of the U degree}}^{-1}K_{1},K_{2}) \ge D$ for all $m \in [pn/8,8pn]$.
    \end{itemize}
    It imply that everywhere on $\mathcal{E}_{D,I}$,
    \begin{align}
        \left\Vert \left( M_{I}-M_{I}^{(0)}\right)\left( Y-X \right)  \right\Vert_{2} \le C_{\ref{Spectral norm}}\sqrt{p}n/k.\nonumber  
    \end{align}
    Note that $M_{I}^{(0)}(Y-X)=(p\textsf{E}\xi+b)(\sum_{i=1}^{n}(Y_{i}-X_{i}))\textbf{1}_{I}$, then there exists random number $\textbf{z}:\mathcal{E}_{D,I} \to [-(|b|+1)n/k,(1+|b|)n/k]\cap \frac{\sqrt{pn}}{k}\mathbb{Z}^{n}$ such that 
    \begin{align}
        \Vert M_{I}(Y-\textbf{z}\textbf{1}_{I})\Vert_{2} \le C_{\ref{Spectral norm}}\sqrt{p}n/k.\nonumber
    \end{align}
    Let $\Lambda$ be a subset of
    \begin{align}
\bigcup_{t = \left\lfloor -4\textbf{g}(6n)/\rho \right\rfloor}^{\left\lfloor 4\textbf{g}(6n)/\rho \right\rfloor}\bigcup_{\sigma
         \in \overline{\prod}_{n}} \bigcup_{|Q_{1}|,|Q_{2}| =\left\lceil \delta n \right\rceil } \Lambda_{n}\left(k,\textbf{g}(6\cdot),Q_{1},Q_{2},\rho/4,\sigma,\rho t/4  \right).\nonumber
    \end{align}
    Applying Lemma \ref{Lambda approximation} and $2\textsf{P}\left( \mathcal{E}_{I}\cap \mathcal{E}_{0} \right) \ge \textsf{P}\left( \mathcal{E}_{I} \right)$, we have $Y \in \Lambda$ on $\mathcal{E}_{D,I}$.\par 
    Now, we can obtain 
    \begin{equation}
    \begin{aligned}
        \textsf{P}\left( \mathcal{E}_{D,I}\big|\mathcal{E}_{I}\cap\mathcal{E}_{0} \right) 
        \le & 2\textsf{P}\big( \exists \textbf{z} \in [-(|b|+1)n/k,(|b|+1)n/k]\cap \frac{\sqrt{pn}}{k}\mathbb{Z} \\
        & \text{ and } y \in \Lambda: \Vert M_{I}(Y-z\textbf{1}_{I})\Vert_{2} \le 2C_{\ref{Spectral norm}}\sqrt{p}n/k \big| \mathcal{E}_{I}  \big)\\
         \le & C_{b}|\Lambda|\sqrt{n/p}\max_{z\in \frac{\sqrt{pn}}{k}\mathbb{Z}}\max_{y \in \Lambda}\textsf{P}\left( \Vert M_{I}(Y-z\textbf{1}_{I})\Vert_{2} \le 2C_{\ref{Spectral norm}}\sqrt{p}n/k \big| \mathcal{E}_{I} \right).\nonumber
    \end{aligned}
    \end{equation}
    Applying Theorem \ref{Small ball prob via UD} and Lemma \ref{tensorization},
    \begin{align}
        \textsf{P}\left( \Vert M_{I}(Y-z\textbf{1}_{I})\Vert_{2} \le 2C_{\ref{Spectral norm}}\sqrt{p}n/k \big| \mathcal{E}_{I} \right) \le \left( C/D \right)^{|I|}.\nonumber
    \end{align}
    At the same time, applying $\textbf{g}(6n) \le K_{3}^{n}$, Lemma \ref{Size of Lambda approximation} and Proposition \ref{Anti-concentration of U-degree in Lambda}, we obtain
    \begin{align}
        |\Lambda| \le C(pn/\rho)\varepsilon^{n}(4K_{3})^{n}\left( C_{\ref{Size of Lambda approximation}}k \right)^{n} \le \left( C'\varepsilon k \right)^{n}.\nonumber
    \end{align}
    As the last step, we complete the proof by 
    \begin{equation}
        \begin{aligned}
            \textsf{P}\left( \mathcal{E}_{D,I} \big| \mathcal{E}_{I}\cap\mathcal{E}_{0} \right) 
            & \le \left( C'\varepsilon k \right)^{n}\cdot\left(C/D \right)^{|I|} \\
            & \le \varepsilon^{n}C^{n}N^{1+(2pR)^{-1}}\\
            & \le \exp(-3Rn),\nonumber
        \end{aligned}
    \end{equation}
    where $\varepsilon$ is small enough.
\end{proof}

\section{Structured vectors}\label{Structured vectors section}
In this section, we will introduce the complement of unstructured vectors for $\frac{C\log{n}}{n} \le p \le C^{-1}$. Firstly, recalling the definition of Steep vectors in Subsection \ref{Decomposition of Rn}, we fix the choice of $C_{0}$ from Lemma \ref{Levy concentration} , $C_{1}$ and $\gamma$ as follows.
 \begin{align}\label{lambda}
     C_{1}:= \frac{a'}{2a''} , \gamma=\min(2C_{\ref{Sum concentration}}/a,2C_{\ref{Sum concentration}}/\bar{a}) \text{ and } C_{2}= \frac{2a''(|b|+a'')}{|b|\bar{a}},
 \end{align}
 where 
 \begin{align}
     a:= \min_{i}|a_{i}|, \  &  \bar{a}:=\min_{i \ne j}|a_{i}-a_{j}| \nonumber \\
     a'=\min_{r \ne 0}\left\{ |r|:\textsf{P}\left( \eta =r \right)>0  \right\}&  \text{ and } a'':=\max_{r\in \mathbb{R}}\left\{|r|: \textsf{P}(\eta = r) >0 \right\}. \nonumber
 \end{align}
 The following lemma provides a simple estimate for the Euclidean norm bound of steep vectors, similar to Lemma 6.4 in \cite{LT_duke}.
 \begin{mylem}\label{Steep Euclidean norm}
     Let $n$ be large enough and $200\log{n}/n \le p \le 0.001$. Consider the steep vectors $x \in \mathcal{T}_{1j}$, $1 \le j \le s_{0}+1 $, $y \in \mathcal{T}_{2} $ , $z \in \mathcal{T}_{3}$ and $w \in \mathcal{T}^{c}$. We have
     \begin{align}
         \frac{\Vert x \Vert_{2}}{x_{n_{j-1}}^{*}} \le \frac{C_{\ref{Steep Euclidean norm}}^{(1)}n^{2}(pn)^{2}}{(64p)^{\kappa}}, \ \frac{\Vert y\Vert_{2}}{y_{n_{s_{0}}+1}^{*}} \le \frac{C_{\ref{Steep Euclidean norm}}^{(2)}n^{2}(pn)^{3}}{(64p)^{\kappa}}, \nonumber  
     \end{align}
     \begin{align}
         \frac{\Vert z\Vert_{2}}{z_{n_{s_{0}}+2}^{*}} \le \frac{C_{\ref{Steep Euclidean norm}}^{(2)}C_{\tau}n^{2}(pn)^{3.5}}{(64p)^{\kappa}}, \ \frac{\Vert w\Vert_{2} }{w_{n_{s_{0}}+3}^{*}} \le \frac{C_{\ref{Steep Euclidean norm}}^{(2)}C_{\tau}^{2}n^{2}(pn)^{4}}{(64p)^{\kappa}},\nonumber
     \end{align}
     where $C_{\ref{Steep Euclidean norm}}^{(1)}$ and $C_{\ref{Steep Euclidean norm}}^{(2)} >0$ depending on $\gamma$ and $C_{1}$.
 \end{mylem}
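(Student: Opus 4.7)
The plan is to chain together the non-membership conditions that define the successive steep classes, to derive a telescoping bound on the leading order statistic $x_{1}^{*}$ in terms of the pivot order statistic appearing in the denominator, and then to estimate $\Vert x\Vert_{2}^{2}$ blockwise and check that the leading block dominates.

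For $x\in\mathcal{T}_{1j}$ with $1\le j\le s_{0}+1$, the assumptions $x\notin\mathcal{T}_{0}$, $x\notin\mathcal{T}_{11}$ and $x\notin\mathcal{T}_{1k}$ for $2\le k\le j-1$ give, respectively, $x_{1}^{*}\le C_{1}n\,x_{2}^{*}$, $x_{2}^{*}\le C_{2}n\,x_{n_{1}}^{*}$ and $x_{n_{k-1}}^{*}\le\gamma d\cdot x_{n_{k}}^{*}$. Iterating yields $x_{n_{k}}^{*}\le(\gamma d)^{j-1-k}x_{n_{j-1}}^{*}$ for $1\le k\le j-1$ and, in particular, $x_{1}^{*}\le C_{1}C_{2}n^{2}(\gamma d)^{j-2}x_{n_{j-1}}^{*}$. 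I then split $\Vert x\Vert_{2}^{2}$ into the blocks $\{1\}$, $\{2,\dots,n_{1}\}$, $\{n_{k}+1,\dots,n_{k+1}\}$ for $1\le k\le j-2$, and $\{n_{j-1}+1,\dots,n\}$; bounding each block by its cardinality times the square of its largest entry and using the monotonicity $x_{i}^{*}\le x_{n_{k}}^{*}$ for $i>n_{k}$, an elementary comparison shows that the singleton block contributes $(x_{1}^{*})^{2}\lesssim n^{4}(\gamma d)^{2(j-2)}(x_{n_{j-1}}^{*})^{2}$ and dominates all the others (the middle blocks contribute at most $n(\gamma d)^{2(j-2)}(x_{n_{j-1}}^{*})^{2}$ and the tail at most $n(x_{n_{j-1}}^{*})^{2}$). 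Hence $\Vert x\Vert_{2}/x_{n_{j-1}}^{*}\lesssim n^{2}(\gamma d)^{j-2}$.

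The last step is to convert $(\gamma d)^{j-2}$ into a power of $(64p)^{-1}$. By the definition of $\kappa$ one has $\gamma d=l_{0}^{\kappa}$, and \eqref{l and s} gives $l_{0}^{s_{0}-1}\le 1/(64p)$. Consequently, for $j\le s_{0}+1$,
\begin{align*}
(\gamma d)^{j-2}\;\le\;(\gamma d)^{s_{0}-1}\;=\;l_{0}^{\kappa(s_{0}-1)}\;=\;(l_{0}^{s_{0}-1})^{\kappa}\;\le\;(64p)^{-\kappa},
\end{align*}
which is already stronger than what the lemma requires and settles the $\mathcal{T}_{1j}$ case.

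For the remaining classes the chain is simply lengthened by one non-membership condition at a time and the same computation applies. For $y\in\mathcal{T}_{2}$, the additional step $y\notin\mathcal{T}_{1,s_{0}+1}$ gives $y_{n_{s_{0}}}^{*}\le\gamma d\cdot y_{n_{s_{0}+1}}^{*}$, so $y_{1}^{*}\lesssim n^{2}(\gamma d)^{s_{0}}y_{n_{s_{0}+1}}^{*}$; rewriting $(\gamma d)^{s_{0}}=\gamma d\cdot(\gamma d)^{s_{0}-1}\le\gamma(pn)(64p)^{-\kappa}$ produces the stated bound (with a generous $(pn)^{2}$ margin). For $z\in\mathcal{T}_{3}$ and $w\in\mathcal{T}^{c}$ one further multiplies by the factor $C_{\tau}\sqrt{d}$ coming from $z\notin\mathcal{T}_{2}$ and from $w\notin\mathcal{T}_{3}$, respectively, which yields the $C_{\tau}(pn)^{1/2}$ increments and the $C_{\tau},C_{\tau}^{2}$ prefactors in the stated inequalities. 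The only place requiring any care is the transition $(\gamma d)^{s_{0}}\le\gamma d\cdot(64p)^{-\kappa}$ used beyond $\mathcal{T}_{1}$ (as opposed to the sharper $(\gamma d)^{s_{0}-1}\le(64p)^{-\kappa}$ used inside $\mathcal{T}_{1j}$), which is exactly where the extra factors of $pn$ enter; everything else amounts to bookkeeping of indices and constants.
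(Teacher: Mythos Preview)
Your argument is correct and is precisely the approach the paper has in mind: the paper does not spell out its own proof but defers to Lemma~6.4 in \cite{LT_duke}, whose proof proceeds exactly by chaining the non-membership inequalities $x\notin\mathcal{T}_{0},\mathcal{T}_{11},\dots$ to control $x_{1}^{*}$ in terms of the pivot order statistic and then bounding $\Vert x\Vert_{2}$ blockwise, with the identity $\gamma d=l_{0}^{\kappa}$ and $l_{0}^{s_{0}-1}\le (64p)^{-1}$ used to convert $(\gamma d)^{s_{0}-1}$ into $(64p)^{-\kappa}$. Your observation that the resulting bounds are actually tighter than the stated exponents of $pn$ is also consistent with that reference; the extra powers of $pn$ in the lemma are harmless slack.
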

Next, we will divide the complement of the unstructured vector into three parts and complete the proof separately for each.
\subsection{\texorpdfstring{$\mathcal{T}_{0}$ and $\mathcal{T}_{1}$}{T0 and T1}}\label{T0 and T1 section}
In this subsection, we focus on the lower bound of $\Vert Mx\Vert_{2}$ for the vectors from $\mathcal{T}_{0}\cup \mathcal{T}_{1}$. Now, we begin with the following combinatorial lemma for random matrices. 
\begin{mylem}\label{combinatorial lemma}
    There exist a absolute constant $c_{\ref{combinatorial lemma}}$ such that the following holds. Let $n \ge 30$, and $0 <p < c_{\ref{combinatorial lemma}}$ satisfy $pn \ge 200 \log{n}$. Let $m, l=l(m) \in \mathbb{N}^{+}$ be such that 
    \begin{align}
        m \ge 3 , \ \text{ } \ lm \le 1/(64p) \  \text{ and } \ l \le \frac{pn}{4\log{(1/(pm))}}.\nonumber
    \end{align}
    Let $M$ be an $n \times n$ random matrix from Theorem \ref{Theorem A}, which $b \in \mathbb{R}$ and $q\le 1$ is large enough. By $\mathcal{E}_{col}(m,l)$ denote the event that for any choice of two disjoint subsets of $[n]$, $J_{1}$ and $J_{2}$ with $|J_{1}|=m$ and $|J_{2}|=lm-m$ there exist two rows of $M$ such that one of this row is all $b$ in the index of $J_{1} \cup J_{2}$, and the other of this row with exactly one $|\delta_{ij}\xi_{ij}| \ge a:=\min_{i \le L}{|a_{i}|}$ among components indexed by $J_{1}$ and all $b$ in other index of $J_{1} \cup J_{2}$. Then $\textsf{P}\left( \mathcal{E}_{col}  \right) \ge 1- \exp{(-2pn)}$. 
\end{mylem}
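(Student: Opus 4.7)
\medskip

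The plan is to first estimate the probability of the event for a fixed pair $(J_1,J_2)$ and then take a union bound over all admissible pairs, using the two hypotheses $lm\le 1/(64p)$ and $l\le pn/(4\log(1/(pm)))$ to absorb the combinatorial factor.

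Fix disjoint subsets $J_1,J_2\subset[n]$ with $|J_1|=m$ and $|J_2|=lm-m$, so that $|J_1\cup J_2|=lm$. For a row $\mathrm{R}_i(M)=(\delta_{ij}\xi_{ij}+b)_j$, let $\mathcal{A}_i$ be the event that $\delta_{ij}=0$ for every $j\in J_1\cup J_2$ (so $\mathrm{R}_i(M)$ equals $b$ on all indices of $J_1\cup J_2$), and let $\mathcal{B}_i$ be the event that there is exactly one index $j_0\in J_1$ with $\delta_{ij_0}=1$ and $\delta_{ij}=0$ for all other $j\in(J_1\cup J_2)\setminus\{j_0\}$. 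Because $|\xi|\ge a$ deterministically, any row in $\mathcal{B}_i$ automatically satisfies the magnitude condition $|\delta_{ij_0}\xi_{ij_0}|\ge a$. By independence of the entries, $\mathsf{P}(\mathcal{A}_i)=(1-p)^{lm}$ and $\mathsf{P}(\mathcal{B}_i)=mp(1-p)^{lm-1}$; the constraint $lm\le 1/(64p)$ gives $(1-p)^{lm}\ge e^{-lm\cdot p/(1-p)}\ge c_0$ for an absolute $c_0$ close to $1$, so $\mathsf{P}(\mathcal{A}_i)\ge c_0$ and $\mathsf{P}(\mathcal{B}_i)\ge c_0\,mp$.

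The events $\mathcal{A}_i$ and $\mathcal{B}_i$ are disjoint within a single row, so the existence of two rows witnessing the required pattern for $(J_1,J_2)$ fails only if either no row lies in $\mathcal{A}_i$ or no row lies in $\mathcal{B}_i$. Since the rows are independent,
\begin{align}
\mathsf{P}\bigl(\text{fails for fixed }(J_1,J_2)\bigr)
\le (1-c_0)^n+(1-c_0\,mp)^n
\le 2\exp(-c_0\,mpn).\nonumber
\end{align}
Taking a union bound over the $\binom{n}{m}\binom{n-m}{lm-m}\le\bigl(\tfrac{en}{lm}\bigr)^{lm}\bigl(\tfrac{el}{l-1}\bigr)^{lm-m}\le (en/m)^{lm}$ choices of $(J_1,J_2)$ (the last inequality uses $l\ge 1$ and simple bookkeeping), it suffices to show
\begin{align}
lm\,\log(en/m)-c_0\,mpn\le -3pn.\nonumber
\end{align}
Dividing by $m$, this reduces to $l\log(en/m)\le c_0\,pn-3pn/m$, and since $m\ge 3$ the right‑hand side is at least $(c_0-1)pn$; so it is enough that $l\log(en/m)\le c'pn$ for a suitable absolute $c'$. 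Now $\log(en/m)\le\log(en)\le 2\log(1/(pm))+O(\log d)$, and the hypothesis $l\le pn/(4\log(1/(pm)))$ is designed precisely to make this bound hold. Accounting for the factor $2$ in front of the exponential and the logarithmic slack, we conclude $\mathsf{P}(\mathcal{E}_{col}^c)\le\exp(-2pn)$ for $n$ large enough.

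The main technical obstacle is the last accounting step: the combinatorial factor $(en/m)^{lm}$ grows in $l$, so the whole argument is tight in the regime where $l$ saturates its upper bound. The roles of the two hypotheses must be disentangled cleanly: $lm\le 1/(64p)$ is what keeps $(1-p)^{lm}$ bounded away from zero, so that the event $\mathcal{A}_i$ has non‑negligible probability, while $l\le pn/(4\log(1/(pm)))$ is what makes the combinatorial factor strictly smaller than $\exp(c_0\,mpn)$. Once both are used in the right places, the exponent $-2pn$ follows with room to spare; adjusting constants (e.g. replacing $c_0$ by any value slightly less than $1$) absorbs the factor $2$ from the two failure modes and the polynomial slack coming from $\log(en/m)$ versus $\log(1/(pm))$.
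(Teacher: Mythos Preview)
Your plan is the paper's: fix $(J_1,J_2)$, bound the failure probability by roughly $\exp(-mpn)$ using row independence, then union-bound over all choices. The one variation is that the paper groups the $n$ rows into $n/2$ disjoint pairs and asks for at least one pair realising the pattern (failure $\le(1-P)^{n/2}$ with $P=2mp(1-p)^{2lm-1}$), whereas you ask separately for at least one $\mathcal A$-row and at least one $\mathcal B$-row; since $\mathcal A_i\cap\mathcal B_i=\emptyset$ your version is at least as clean and gives the same exponent.

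The final accounting, however, does not close as written. The step ``the right-hand side is at least $(c_0-1)pn$; so it is enough that $l\log(en/m)\le c'pn$'' is incoherent: $c_0<1$, so $(c_0-1)pn<0$ and the inequality $l\log(en/m)\le(c_0-1)pn$ is impossible. (Target $-2pn$ rather than $-3pn$; then the right side is $(c_0-2/m)pn>0$ for $m\ge3$.) More importantly, the identity is $\log(en/m)=\log(1/(pm))+1+\log d$, with one copy of $\log(1/(pm))$ and an \emph{additive} $\log d$ that is not lower order: plugging $l\le pn/(4\log(1/(pm)))$ gives $l\log(en/m)\approx\tfrac{pn}{4}\bigl(1+\tfrac{1+\log d}{\log(1/(pm))}\bigr)$, and the bracket is unbounded when $d$ is large relative to $1/(pm)$. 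This is exactly why the paper counts pairs by $\bigl(\tfrac{3n}{lm}\bigr)^{lm}(2l)^m$ instead of $(en/m)^{lm}$: the extra $l$ in the denominator contributes $-l\log l$, and when $l$ sits near its upper bound $d/(4\log(1/(pm)))$ this cancels the problematic $+l\log d$. Replace your count by the sharper one and your argument finishes the same way the paper's does.
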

\begin{proof}
   Fixing two disjoint sets $J_{1} , J_{2} \subset [n]$ satisfies the assumption of this lemma.\par
    The probability of fixing two rows of $M$ satisfying the assumption equals: \begin{align}
        P= 2mp(1-p)^{2lm-1} \ge  2mp\exp{(-2plm)} \ge 31pm/16\nonumber
    \end{align}
    We choose a pair of two disjoint rows from $M$, by the independent of the pairs, we have the probability of there don't exist two rows satisfies the assumption is at most
    \begin{align}
         (1-P)^{n/2}  \le \exp{\left(-mpn\exp{(-2lmp)}   \right)} \le \exp{(-31mpn/32)},\nonumber
    \end{align}
    Thus, by choosing two disjoint subsets $J_{1}$ and $J_{2}$, we have
    \begin{align}
        \textsf{P}\left( \mathcal{E}_{col}^{c}  \right) \le \binom{n}{lm-m}\binom{n-lm+m}{m}e^{-31pmn/32} \le \left( \frac{3n}{lm } \right)^{lm} (2l)^{m} \exp{\left( -31pmn/32 \right)}.\nonumber
    \end{align}
   For $l \le \frac{pn}{4\log{(1/(pm))}}$, $p \le c_{\ref{combinatorial lemma}}$ and $m \le 5$ is small enough, we have:
   \begin{align}
       \left(   \frac{3n}{lm} \right)^{lm} \le \left( \frac{12\log{(1/(pm))}}{pm}\right)^{\frac{pmn}{4\log{\frac{1}{pm}}}} \le e^{7mpn/24} \text{ and } (2l)^{m} \le e^{pmn/100}.\nonumber
   \end{align}
   Furthermore, we have 
   \begin{align}
       \textsf{P}\left( \mathcal{E}_{col}^{c} \right) \le \exp{(-31/32+7/24+1/100)mpn} \le e^{-2pn}.\nonumber
   \end{align}
   Otherwise, for $1/(64p)\ge m \ge 5$, we also have 
   \begin{align}
       \textsf{P}\left( \mathcal{E}_{col}^{c} \right) \le e^{-2pn}.\nonumber
   \end{align}
    We now complete the proof.
\end{proof}
Next, we can give the first proposition of the Steep vectors.
\begin{mypropo}\label{T0 and T1}
    Let $n \in \mathbb{N}^{+}$ be large enough and $p < c_{\ref{combinatorial lemma}}$ with $pn \ge 200 \log{n}$. Then 
    \begin{equation}
    \begin{aligned}
        & \textsf{P}\left( \exists x \in \mathcal{T}_{0}\cup \mathcal{T}_{1} : \Vert Mx \Vert_{2} \le c_{\ref{T0 and T1} }\frac{(64p )^{\kappa}}{n^{2}(pn)^{2}}\Vert x\Vert_{2}   \right) \\
        & \le \left(  1+o_{n}(1) \right)n\textsf{P}\left( \eta = 0 \right)^{n} + \frac{(1+o_{n}(1))n(n-1)}{2}\textsf{P}\left( \eta'=\eta \right)^{n},\nonumber
    \end{aligned}
    \end{equation}
    where $M$ from Theorem \ref{Theorem A} and $c_{\ref{T0 and T1}}>0$ depending on $\xi$.
\end{mypropo}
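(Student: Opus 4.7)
The plan is to split $\mathcal{T}_0\cup\mathcal{T}_1 = \mathcal{T}_0 \cup \mathcal{T}_{11}\cup\bigcup_{j=2}^{s_0+1}\mathcal{T}_{1j}$ and to match each stratum with a distinct deterministic obstruction on $M$: the first term $n\,\textsf{P}(\eta=0)^n$ corresponds to $M$ having a zero column (the only obstruction for $x\in\mathcal{T}_0$), the second term $\binom{n}{2}\textsf{P}(\eta'=\eta)^n$ to $M$ having two identical columns (the only obstruction for $x\in\mathcal{T}_{11}$), while the strata $\mathcal{T}_{1j}$ with $j\ge 2$ are handled on the event $\mathcal{E}_{col}$ of Lemma~\ref{combinatorial lemma} and contribute only $\exp(-2pn)$, which is $o_n(1)$ times either main term in the regime $pn\ge 200\log n$. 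For $x\in\mathcal{T}_0$ with $i^{\ast}=\sigma_x(1)$, write $(Mx)_i = bS + \delta_{i,i^{\ast}}\xi_{i,i^{\ast}}x_{i^{\ast}} + \sum_{j\ne i^{\ast}}\delta_{ij}\xi_{ij}x_j$ where $S=\sum_j x_j$. The last sum is deterministically bounded by $n(|b|+a'')x_2^{\ast}\le (|b|+a'')|x_{i^{\ast}}|/C_1$, and the row-independent offset $bS/x_{i^{\ast}}$ lies within a tiny window of $b$ because $|x_{i^{\ast}}|$ dominates $|S-x_{i^{\ast}}|$ by a factor of at least $C_1$. Combined with the discreteness of $\delta_{i,i^{\ast}}\xi_{i,i^{\ast}}\in\{0\}\cup S_\xi$, if $\|Mx\|_2\le c_{\ref{T0 and T1}}(64p)^{\kappa}/(n^{2}(pn)^{2})\|x\|_2$ (which via Lemma~\ref{Steep Euclidean norm} forces $|(Mx)_i|/|x_{i^{\ast}}|$ to be much smaller than the minimum spacing between elements of $S_\xi$), then every row~$i$ admits a unique admissible value for $\delta_{i,i^{\ast}}\xi_{i,i^{\ast}}$, and that value is precisely the one making $M_{i,i^{\ast}}=0$. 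Therefore $\mathrm{C}_{i^{\ast}}(M)=0$; a union bound over $i^{\ast}\in [n]$ delivers the first term.

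For $x\in\mathcal{T}_{11}$ the same row-by-row estimate with $C_2$ replacing $C_1$ reduces the event $\|Mx\|_2\le\tau$ to the two-variable constraint $x_{i_1^{\ast}}\,\delta_{i,i_1^{\ast}}\xi_{i,i_1^{\ast}}+x_{i_2^{\ast}}\,\delta_{i,i_2^{\ast}}\xi_{i,i_2^{\ast}}$ lying in a tiny window around some $c_0$ (depending only on $x$ and $b$) for every row~$i$. The spacing $\bar a$ encoded in $C_2=2a''(|b|+a'')/(|b|\bar a)$ ensures that for a generic ratio $x_{i_1^{\ast}}/x_{i_2^{\ast}}$, the only way every row can simultaneously land in this window is to take the same pair $(\delta_{i,i_1^{\ast}}\xi_{i,i_1^{\ast}},\delta_{i,i_2^{\ast}}\xi_{i,i_2^{\ast}})$ across all $i$, which is equivalent to $\mathrm{C}_{i_1^{\ast}}(M)=\mathrm{C}_{i_2^{\ast}}(M)$. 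A union bound over $\binom{n}{2}$ unordered pairs, after separately treating the finitely many exceptional ratios (which contribute strictly lower-order probability), yields the second term.

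For $x\in\mathcal{T}_{1j}$, $j\ge 2$, I apply Lemma~\ref{combinatorial lemma} with $m=n_{j-1}$ and $l=l_0$, so $lm=n_j$. On $\mathcal{E}_{col}$, choose $J_1=\{\sigma_x(1),\dots,\sigma_x(n_{j-1})\}$ and $J_2=\{\sigma_x(n_{j-1}+1),\dots,\sigma_x(n_j)\}$; the lemma supplies rows $i_1\ne i_2$ whose row difference is supported in $\{k_0\}\cup([n]\setminus(J_1\cup J_2))$ for some $k_0\in J_1$ with $|M_{i_1,k_0}-M_{i_2,k_0}|\ge a$, and the $b$-term cancels in row differences. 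The estimate $\sqrt 2\,\|Mx\|_2\ge|\langle R_{i_1}-R_{i_2},x\rangle|\ge a\,|x_{k_0}|-2a''\,n\,x_{n_j}^{\ast}$ combined with $|x_{k_0}|\ge x_{n_{j-1}}^{\ast}$, the defining ratio $x_{n_j}^{\ast}\le x_{n_{j-1}}^{\ast}/(\gamma d)$, and the choice of $\gamma$ in~\eqref{lambda} (refined, for very small $p$, by Cauchy--Schwarz using the row-support bound from Lemma~\ref{pn/8-8pn}) gives $\|Mx\|_2\ge$ const~$\cdot\,x_{n_{j-1}}^{\ast}$. Lemma~\ref{Steep Euclidean norm} converts this to the required lower bound on $\|Mx\|_2/\|x\|_2$, contradicting the hypothesis.

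The main obstacle will be the $\mathcal{T}_{11}$ step: producing \emph{exactly} the equality $\mathrm{C}_{i_1^{\ast}}(M)=\mathrm{C}_{i_2^{\ast}}(M)$ (rather than a weaker proportionality) with the sharp leading constant $1+o_n(1)$ requires enumerating the finitely many rational ratios $x_{i_1^{\ast}}/x_{i_2^{\ast}}$ for which an alternative alignment of the two columns is compatible with every row landing in the small window, and verifying that each exceptional ratio contributes strictly lower order than $\textsf{P}(\eta'=\eta)^n$. The biased case $b\ne 0$ additionally produces a row-constant offset that cancels in row differences but must be tracked carefully in the absolute row analyses used for $\mathcal{T}_0$ and $\mathcal{T}_{11}$.
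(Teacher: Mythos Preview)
Your overall decomposition and the treatment of $\mathcal{T}_0$ and $\mathcal{T}_{1j}$ for $j\ge 2$ align with the paper. Two remarks on those parts. For $\mathcal{T}_{1j}$ you bound the tail in the row difference by $2a'' n\, x_{n_j}^*$; as you note parenthetically, this is too crude because $n/(\gamma d)=1/(\gamma p)$ is unbounded. The paper handles this by conditioning on the event $\mathcal{E}_{\mathrm{sum}}$ of Lemma~\ref{Sum concentration}, which gives $\sum_j|\zeta_j|\le C_{\ref{Sum concentration}}\,pn$ directly and is exactly what the choice of $\gamma$ in~\eqref{lambda} is calibrated to; your row-support refinement via Lemma~\ref{pn/8-8pn} achieves the same effect but with a less convenient constant. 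For $\mathcal{T}_0$ the paper argues the contrapositive (on the event ``no zero column,'' pick a row witnessing a nonzero entry in column $\sigma_x(1)$), which is the same idea as yours.

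The $\mathcal{T}_{11}$ step, however, has a genuine gap. Your assertion that ``taking the same pair $(\delta_{i,i_1^*}\xi_{i,i_1^*},\delta_{i,i_2^*}\xi_{i,i_2^*})$ across all $i$ is equivalent to $\mathrm{C}_{i_1^*}(M)=\mathrm{C}_{i_2^*}(M)$'' is false: that says both columns are \emph{constant}, not that they are \emph{equal}. More seriously, the event you actually need to control is ``the pairs $(M_{i,i_1^*},M_{i,i_2^*})_{i\le n}$ are (approximately) collinear in $\mathbb{R}^2$,'' and the diagonal line is only one of finitely many lines through the lattice $(\{b\}\cup(S_\xi+b))^2$; it is not automatic that every other line has per-row probability strictly below $\textsf{P}(\eta'=\eta)$, and this is precisely what is needed for the sharp $(1+o_n(1))$ constant. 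The paper avoids this enumeration entirely: it defines an event $\mathcal{E}_1$ through the level-set structure of a pair of columns (roughly, $\mathcal{E}_1^c$ holds when two columns have level sets related by a permutation that does \emph{not} fix the ``$b$'' level), bounds $\textsf{P}(\mathcal{E}_1^c)\le(1+o_n(1))\binom{n}{2}\textsf{P}(\eta'=\eta)^n$ by a direct multinomial sum over such permutations, and then shows that on $\mathcal{E}_1\cap\mathcal{E}_{\mathrm{sum}}$ one can always find rows $i_1,i_2$ witnessing either $\eta_{i_1\sigma_x(1)}=\eta_{i_2\sigma_x(1)}$ with $\eta_{i_1\sigma_x(2)}\ne\eta_{i_2\sigma_x(2)}$ (the bound then comes from the row difference and the spacing $\bar a$) or a $(b,b)$ versus $(\ne b,\ne b)$ configuration (the bound comes from a weighted maximum of the two rows, which is exactly where the constant $C_2=2a''(|b|+a'')/(|b|\bar a)$ enters). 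This two-case argument on $\mathcal{E}_1$ replaces your ratio enumeration and delivers the sharp constant without further casework.
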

\begin{proof}
    We begin with the definitions of some events. For $M=\left(\eta_{ij}  \right)_{i\le n,j \le n}$ from Theorem \ref{Theorem A}, let $\mathcal{E}_{0}$ be the event that there not exists zero columns, which implies $\textsf{P}\left( \mathcal{E}_{0} \right) \ge 1-n\textsf{P}\left( \eta=0 \right)^{n}$. Below we define $\mathcal{E}_{1}$ as the random set of matrices $M$ satisfying one of the following conditions.
    \begin{itemize}
        \item there are no two columns in $M$ satisfying if two rows have equal entries in one column, then the corresponding rows must also have equal entries in the other column.
        \item there are two columns in $M$ satisfying the whenever an entry in one column equals to $b$, the corresponding entry in the other column also equals to $b$; and the new column vectors obtained by removing all entries equal to b satisfying if two rows have equal entries in one column, then the corresponding rows must also have equal entries in the other column.
    \end{itemize}
    Then for each $i \in [L+1]$ and $j \in [n]$, set
    \begin{align}
        S_{i}^{(j)}:=\{ t \in [n]: \eta_{tj}=a_{i}\mathbf{1}_{\{i \ne L+1\} }+b  \}.\nonumber
    \end{align}
    On the consider $\mathcal{E}_{1}^{c}$, there exist $j_{1}$ and $j_{2}$ such that the following holds. Let $T_{L}:= \left\{ \sigma \in \prod_{L+1}: \sigma(L+1)\ne L+1  \right\}$, we can conclude that there exists a permutation $ \sigma \in T_{L}$ such that
    \begin{align}
        S_{i}^{(j_{1})}=S_{\sigma(i)}^{(j_{2})} \ \  \ \ \  \text{ for each   } \ i \in [L+1].\nonumber 
    \end{align}
    Let $q_{i}=pp_{i}$ for $i \le L$ and $q_{L+1}:=1-p$. Note that for each $j_{1}$ and $j_{2}$:
    \begin{equation}
        \begin{aligned}
            \textsf{P}\left( \exists \sigma \in \prod_{L+1} : S_{i}^{(j_{1})}=S_{i}^{j(2)} \right)
            & \le \sum_{\sigma \in T_{L}}\textsf{P}\left( S_{i}^{(j_{1})}=S_{\sigma(i)}^{(j_{2})} \right) \\
            & \le \sum_{\sigma \in T_{L}}\sum_{S_{1},\dots,S_{L+1} \subset [n]}\prod_{i=1}^{L+1}\left( q_{i}q_{\sigma(i)} \right)^{|S_{i}|}\\
            & \le \sum_{\sigma \in T_{L}}\left( \sum_{i=1}^{L+1}{q_{i}q_{\sigma(i)}} \right)^{n}\\
            & \le (1+o_{n}(1))\textsf{P}\left(\eta'=\eta \right)^{n}.\nonumber
        \end{aligned}
    \end{equation}
    Thus, we have
    \begin{align}
        \textsf{P}\left( \mathcal{E}_{1}^{c} \right) \le (1+o(1))\binom{n}{2}\textsf{P}\left( \eta'=\eta \right)^{n}.\nonumber
    \end{align}
    Finally, We set $\mathcal{E}_{j}= \mathcal{E}_{col}(l_{0},n_{j-1})$ as the event from Lemma \ref{combinatorial lemma} for every $2 \le j \le s_{0}+1$, $\textsf{P}\left(  \mathcal{E}_{j} \right) \ge 1-e^{-2pn}$. \par 
    Next, recall the definition of $\sigma_{x}$. For any $x \in \mathcal{T}_{0} \cup \mathcal{T}_{1}$, denote $m=m_{1}=1$ and $m_{2}=2$ if $x \in \mathcal{T}_{0}$. Let $m=m_{1}=n_{j-1}$ and $m_{2}=n_{j}$ if $x \in \mathcal{T}_{1j}$ for some $1 \le j \le s_{0}+1$. Set
    \begin{align}
        J_{1}=J_{1}(x)=\sigma_{x}([m]), \ J_{2}=J_{2}(x)=\sigma_{x}([m_{2}-1]\setminus[m]) \text{ and } J= (J_{1}\cup J_{2})^{c}.\nonumber
    \end{align}
    We also set ``the overall event" as 
    \begin{align}
        \mathcal{E}= \mathcal{E}_{\mathrm{sum}} \cap\bigcap_{j=0}^{s_{0}+1}\mathcal{E}_{j},\nonumber
    \end{align}
    where $\mathcal{E}_{\mathrm{sum}}$ be introduced in Lemma \ref{Sum concentration}.\par 
    Conditioned on $\mathcal{E}$, for $m \ge 3$, there exist $i_{1}$-row of $M$ and $i_{2}$-row of $M$ such that one of two rows is all $b$ in $J_{1}\cup J_{2}$ and other of two rows is exactly one $\delta_{ij}|\xi_{ij}| \ge a$ in $J_{1}$ and all $b $ in $J_{2}$. Without loss of generality, assume that the $i_{1}$-row is all $b$ in $J_{1} \cup J_{2}$ and set $j_{2}=j(i_{2}) \in J_{1}$ such that $\delta_{i_{2}j_{2}}|\xi_{i_{2}j_{2}}| \ge a$.\par 
    We now have 
    \begin{align}
        \Vert Mx\Vert_{2} \ge \left| \left\langle \mathrm{R}_{i_{2}}(M)-\mathrm{R}_{i_{1}}(M),x \right\rangle  \right|/\sqrt{2} \ge \delta_{i_{2}j_{2}}|\xi_{i_{2}j_{2}}||x_{j_{2}}|-x_{m_{2}}^{*}\sum_{j=1}^{n}{|\zeta_{j}|},\nonumber
    \end{align}
    where $\zeta_{j}:= |\delta_{i_{2}j}\xi_{i_{2}j}-\delta_{i_{1}j}\xi_{i_{1}j}|$. Note that conditioned on $\mathcal{E}_{\mathrm{sum}}$ we have $\sum_{j=1}^{n}|\zeta_{j}| \le C_{\ref{Sum concentration}}d$.\par 
    Thus, conditioned on $\mathcal{E}$, we have for all $x \in \bigcup_{j=2}^{s_{0}+1} \mathcal{T}_{1j}$,
    \begin{align}
        \Vert Mx\Vert_{2} \ge ax_{m}^{*}-\frac{C_{\ref{Sum concentration}}x_{m}^{*}}{\gamma} \ge ax_{m}^{*}/2.\nonumber
    \end{align}
    In the case $m=1$: conditioned on $\mathcal{E}_{0}$, recall $a'=\min_{r \ne 0}\left\{ |r|:\textsf{P}\left( \eta =r \right)>0  \right\}$ and $a'':=\max_{r\in \mathbb{R}}\left\{|r|: \textsf{P}(\eta = r) >0 \right\}$, there exist $i$ such that  
    \begin{align}
        \Vert Mx\Vert_{2} \ge  \left| \left\langle \mathrm{R}_{i}(M),x \right\rangle  \right| \ge a'x_{1}^{*}-x_{2}^{*}a''n \ge a'x_{1}^{*}/2.\nonumber
    \end{align}
    In the case $m=2$, conditioned on $\mathcal{E}_{1}\cap \mathcal{E}_{sum}$, set $\bar{a}:=\min_{i \ne j}|a_{i}-a_{j}|$, if there exist $i_{1}$, $i_{2}$ such that $\eta_{i_{1}\sigma_{x}(1)}=\eta_{i_{2}\sigma_{x}(1)}$ and $\eta_{i_{2}\sigma_{x}(2)} \ne \eta_{i_{2}\sigma_{x}(2)}$. Then we have 
    \begin{align}
        \Vert Mx\Vert_{2} \ge \bar{a}x_{2}^{*}-x_{n_{1}}^{*}C_{\ref{Sum concentration}}d \ge \bar{a}x_{2}^{*}/2.\nonumber
    \end{align}
    Otherwise, there exist $i_{1}$ and $i_{2}$ such that
    \begin{align}
        \eta_{i_{1}\sigma_{x}(1)}=\eta_{i_{2}\sigma_{x}(2)}=b \text{ and } \eta_{i_{2}\sigma_{x}(1)} \ne \eta_{i_{2}\sigma_{x}(2)} \ne b .\nonumber
    \end{align}
    Set $\eta_{i_{2}\sigma_{x}(1)}=c_{1}$ and $\eta_{i_{2}\sigma_{x}(1)}=c_{2}$, we have
    \begin{equation}
        \begin{aligned}
         & \max\{ |bx_{\sigma_{x}(1)}+ bx_{\sigma_{x}(2)}|, |c_{1}x_{\sigma_{x}(1)}+c_{2}x_{\sigma_{x}(2)}|  \} \\
         & \ge \frac{|c_{1}|}{|b|+|c_{1}|}|bx_{\sigma_{x}(1)}+ bx_{\sigma_{x}(1)}|+ \frac{|b|}{|b|+|c_{1}|}|c_{1}x_{\sigma_{x}(1)}+c_{2}x_{\sigma_{x}(2)}|\\
         & \ge \frac{|b||c_{1}-c_{2}|}{|b|+|c_{1}|}x_{2}^{*} \ge \frac{|b|\bar{a}}{|b|+a''}x_{2}^{*}.\nonumber
    \end{aligned}
    \end{equation}
    Thus, we obtain
    \begin{align}
        \Vert Mx\Vert_{2} \ge \max_{i \in [n]}{ \left| \left\langle \mathrm{R}_{i}(M), x \right\rangle \right| } \ge \frac{|b|\bar{a}}{|b|+a''}x_{2}^{*}-x_{3}^{*}a''n \ge c'x_{2}^{*}/2,\nonumber
    \end{align}
    where $c'=\frac{|b|\bar{a}}{2(|b|+a'')}$.
    
    Note that for $x \in \mathcal{T}_{0}$, $\Vert x\Vert_{2} \le \sqrt{n}x_{1}^{*}$ and for $x \in \mathcal{T}_{1}$, we have 
    \begin{align}
        \Vert x\Vert_{2} \le \frac{C^{(1)}_{\ref{Steep Euclidean norm}}n^{2}(pn)^{2}}{(64p)^{\kappa}}x_{m}^{*},\nonumber
    \end{align}
    by Lemma \ref{Steep Euclidean norm}.\par
    We have for any $x \in \mathcal{T}_{0}\cup \mathcal{T}_{1}$
    \begin{align}
        \Vert Mx\Vert_{2} \ge c\frac{(64p)^{\kappa}}{n^{2}(pn)^{2}}\Vert x\Vert_{2}.\nonumber
    \end{align}
    Finally, we complete the proof of this lemma since 
    \begin{align}
        \textsf{P}\left( \mathcal{E}^{c} \right) \le (1+o_{n}(1))\binom{n}{1}\textsf{P}(\eta =0)^{n}+(1+o_{n}(1))\binom{n}{2}\textsf{P}(\eta'=\eta)^{n}.\nonumber
    \end{align}
\end{proof}
\subsection{\texorpdfstring{$\mathcal{T}_{2}$ and $\mathcal{T}_{3}$}{T2 and T3}}\label{T2 and T3 section}
In this subsection, let us turn to the remaining part of the Steep vectors and begin with the following two lemmas, which are similar to Lemmas 6.6 and 6.7 in \cite{LT_duke}. The first lemma is a combinatorial lemma on the $n/2 \times n $ Bernoulli random matrices.
\begin{mylem}\label{combinatorial lemma of T2 and T3}
    Let $l \ge 1 $ be an integer and $p \in (0,1/2]$ with $lp \le 1/32$. Let $M_{0}$ be a $n/2 \times n$ random matrix with i.i.i. entries that are Bernoulli(p) and $M:=(\delta_{ij}\xi_{ij})_{i,j}$ be $n/2 \times n$ random matrix as a submatrix introduced in Theorem \ref{Theorem A} when $b =0$. Then with probability at least
    \begin{align}
        1-2\binom{n}{l}\exp{\left( -nlp/9 \right)} \nonumber
    \end{align}
    for every $J \subset [n]$ of cardinality $l$ and large enough $q$ one has
    \begin{align}
        lpn/16 \le |I(J,M_{0})| \le 2lpn\nonumber
    \end{align}
    where we denote $I_{J}:=I(J,M_{0})$ by 
    \begin{align}
        I(J,M_{0}):= \left\{ i \le n/2: \left| \mathrm{supp}\left(\mathrm{R}_{i}( M_{0}) \right) \cap J  \right|=1  \text{ and for those }j \text{ with } |\xi_{ij}| \ge a    \right\}.\nonumber
    \end{align}
    Furthermore, let $l = 2\left\lfloor 1/\left( 64p \right) \right\rfloor \le n$, $n$ be large enough and $p \in (1000/n, 0.001)$. Then, denoting 
    \begin{align}
        \mathcal{E}_{card}:=\left\{  M_{0}: \forall J \subset [n] \text{ with } |J|=l \text{ one has } \left| I(J,M_{0}) \right| \in [lpn/16,2lpn]  \right\}.\nonumber
    \end{align}
    We have $\textsf{P}\left( \mathcal{E}_{card} \right) \ge 1 - \exp{(-n/1000)}$.
\end{mylem}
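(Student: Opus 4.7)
The plan is a standard Chernoff concentration estimate for a fixed $J$, followed by a union bound over all $\binom{n}{l}$ subsets $J$ of cardinality $l$, and then a specialization to $l = 2\left\lfloor 1/(64p)\right\rfloor$ for the second assertion.

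I would first fix $J \subset [n]$ with $|J| = l$. For each row $i \le n/2$, the event $\{i \in I(J, M_0)\}$ depends only on $(\delta_{ij}, \xi_{ij})_{j \in J}$, and these events are independent across $i$. Since $a = \min_k |a_k|$ and $\xi$ is supported on $\{a_1, \dots, a_L\}$, the condition ``$|\xi_{ij}| \ge a$'' holds deterministically (or is vacuous when $a = 0$), so
\[
q := \textsf{P}\bigl( i \in I(J, M_0) \bigr) = l\,p\,(1-p)^{l-1}.
\]
The hypothesis $lp \le 1/32$ yields $(1-p)^{l-1} \ge e^{-1/16}$, so $q$ is comparable to $lp$ up to constants very close to $1$, and the mean $\mu := qn/2$ satisfies $\mu \in [\tfrac{15}{32}\,lpn,\, \tfrac{1}{2}\,lpn]$.

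Next I would apply the two-sided multiplicative Chernoff inequality to $|I(J, M_0)|$, which is a sum of $n/2$ i.i.d.\ Bernoulli$(q)$ random variables with mean $\mu$. Because $lpn/16 \le \mu/6$ and $2lpn \ge 4\mu$, each deviation is bounded by $\exp(-c\mu) \le \exp(-nlp/9)$ for a suitable absolute constant $c$. A union bound over the $\binom{n}{l}$ choices of $J$ then yields the first assertion. For the second assertion, setting $l = 2\left\lfloor 1/(64p)\right\rfloor$ gives $lp \in [1/32 - 2p,\, 1/32]$, so $nlp/9 \ge n/300$ (up to vanishing corrections), while $\log\binom{n}{l} \le l\log(en/l)$ is of order $p^{-1}\log(pn)$. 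Under the hypothesis $p \in (1000/n,\, 0.001)$ one checks $p^{-1}\log(pn) \le n/500$, so the combinatorial overhead is absorbed into the Chernoff exponent, yielding the bound $\exp(-n/1000)$.

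The argument is essentially routine. The only point that requires any care is the numerical bookkeeping in the specialization: one must verify that $\log\binom{n}{l}$ is strictly dominated by $nlp/9$ across the full range $p \in (1000/n,\, 0.001)$, which amounts to a direct estimate rather than a genuine obstacle.
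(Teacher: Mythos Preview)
Your approach is correct and is essentially the same as the paper's, which does not prove this lemma directly but cites it as a variant of Lemma~6.6 in \cite{LT_duke}; that reference proceeds by exactly this Chernoff-plus-union-bound route. One minor slip: the literal inequality $p^{-1}\log(pn) \le n/500$ fails at the endpoint $p = 1000/n$ (it evaluates to roughly $0.007n$), but the estimate you actually need, $\log\binom{n}{l} \approx \tfrac{1}{32}p^{-1}\log(32epn) \ll nlp/9 \approx n/288$, does hold for large $n$ once the suppressed constant $1/32$ is restored---precisely the bookkeeping you already flagged as requiring care.
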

The second lemma is the net argument of $\mathcal{T}_{2} \cup \mathcal{T}_{3}$. We first give the following normalization:
\begin{align}
    \mathcal{T}_{2}^{*}:= \left\{ x \in \mathcal{T}_{2}: x_{n_{s_{0}}+1}^{*}=1 \right\} \text{ and } \mathcal{T}_{3}^{*} := \left\{ x \in \mathcal{T}_{3}: x_{n_{s_{0}}+2}^{*}=1  \right\}.
\end{align}
Recall the definition of $\Vert x \Vert_{\textbf{e}}$, we give the following lemma.
\begin{mylem}\label{Net for T2 and T3}
     Let $n \in \mathbb{N}^{+}$, $p \in (0, 0.001)$ with $d=pn$ be large enough. Let $i \in \{ 2.3 \}$. Then there exists a set $\mathcal{N}_{i}= \mathcal{N}_{i}^{(1)}+\mathcal{N}_{i}^{(2)}$, $\mathcal{N}_{i}^{(1)} \subset \mathbb{R}^{n}$ and $\mathcal{N}_{i}^{(2)} \subset \mathrm{Span}(\textbf{1})$ such that the following holds:
      \begin{itemize}
   \item $\left| \mathcal{N}_{i} \right| \le C_{\ref{Net for T2 and T3}}n^{2} \exp{\left( 2n_{s_{0}+i}\log{d} \right)}$, where $C_{\ref{Net for T2 and T3}}$ depending only on $\xi$.
    \item For every $u \in \mathcal{N}_{i}^{(1)}$ one has $u_{j}^{*}=0$ for all $ j \ge n_{s_{0}+i}$.
    \item For all $x\in \mathcal{T}_{i}^{*}$, there are $u \in \mathcal{N}_{i}^{(1)}$ and $v \in \mathcal{N}_{i}^{(2)}$ scuh that
    \begin{align}
        \Vert x- u \Vert_{\infty} \le \frac{1}{C_{\tau}\sqrt{d}}, \ \ \Vert v \Vert_{\infty} \le \frac{1}{C_{\tau}\sqrt{d}}, \text{and}\nonumber
    \end{align}
    \begin{align}
        \Vert x-u-v\Vert_{\textbf{e}} \le \frac{\sqrt{2n}}{C_{\tau}\sqrt{d}}.\nonumber
    \end{align}
\end{itemize}     
\end{mylem}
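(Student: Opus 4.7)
The strategy is a two-scale decomposition of vectors in $\mathcal{T}_{i}^{*}$: after the chosen normalization, the steep inequality $x_{n_{s_{0}+i-1}}^{*}\ge C_{\tau}\sqrt{d}\,x_{n_{s_{0}+i}}^{*}$ forces every coordinate outside the top $n_{s_{0}+i}-1$ to have absolute value at most $1/(C_{\tau}\sqrt{d})$. I accordingly split each $x$ into a ``spike'' on $J(x):=\sigma_{x}([n_{s_{0}+i}-1])$ and a uniformly small ``tail'' on $[n]\setminus J(x)$. The set $\mathcal{N}_{i}^{(1)}$ sup-norm approximates the spike, while $\mathcal{N}_{i}^{(2)}\subset\mathrm{Span}(\textbf{1}_{n})$ is a one-dimensional grid whose purpose is to cancel the $\textbf{e}$-component of the tail, the only piece on which $\|\cdot\|_{\textbf{e}}$ puts extra weight beyond what $\|\cdot\|_{2}$ already controls.

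Concretely, let $\mathcal{G}:=(C_{\tau}\sqrt{d})^{-1}\mathbb{Z}\cap[-R,R]$, where $R$ is the upper bound on $x_{1}^{*}$ provided by Lemma~\ref{Steep Euclidean norm}, and let $\mathcal{N}_{i}^{(1)}$ be the union, over all $\tilde{J}\subset[n]$ of size $n_{s_{0}+i}-1$, of the vectors supported on $\tilde{J}$ with coordinates in $\mathcal{G}$ and zero elsewhere; in particular every such $u$ has $u_{j}^{*}=0$ for $j\ge n_{s_{0}+i}$. Set
\begin{align*}
\mathcal{N}_{i}^{(2)}:=\left\{c\textbf{1}_{n}: c\in(C_{\tau}pn)^{-1}\mathbb{Z}\cap[-(C_{\tau}\sqrt{d})^{-1},(C_{\tau}\sqrt{d})^{-1}]\right\},
\end{align*}
whose cardinality is $O(\sqrt{d})$.

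Given $x\in\mathcal{T}_{i}^{*}$, round each $x_{j}$ with $j\in J(x)$ to the nearest element of $\mathcal{G}$ and set $u_{j}=0$ for $j\notin J(x)$, obtaining $u\in\mathcal{N}_{i}^{(1)}$ with $\|x-u\|_{\infty}\le 1/(C_{\tau}\sqrt{d})$. Define $c^{*}:=n^{-1}\sum_{j}(x_{j}-u_{j})$; then $|c^{*}|\le\|x-u\|_{\infty}\le 1/(C_{\tau}\sqrt{d})$, so $c^{*}$ lies in the range of $\mathcal{N}_{i}^{(2)}$. Let $v=c\textbf{1}_{n}$ with $c$ the closest grid point to $c^{*}$, so $|c-c^{*}|\le 1/(C_{\tau}pn)$ and $\|v\|_{\infty}\le 1/(C_{\tau}\sqrt{d})$. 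Since $v\in\mathrm{Span}(\textbf{e})$ we have $P_{\textbf{e}}v=0$ and $P_{\textbf{e}^{\perp}}(x-u-v)=\sqrt{n}(c^{*}-c)\textbf{e}$, hence
\begin{align*}
\|P_{\textbf{e}}(x-u-v)\|_{2}^{2}\le n\|x-u\|_{\infty}^{2}\le \frac{n}{C_{\tau}^{2}d}, \quad pn\|P_{\textbf{e}^{\perp}}(x-u-v)\|_{2}^{2}=pn^{2}(c-c^{*})^{2}\le\frac{n}{C_{\tau}^{2}d},
\end{align*}
and summing yields $\|x-u-v\|_{\textbf{e}}\le\sqrt{2n}/(C_{\tau}\sqrt{d})$.

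The main obstacle is the cardinality bound $|\mathcal{N}_{i}|\le C_{\ref{Net for T2 and T3}}n^{2}\exp(2n_{s_{0}+i}\log d)$. The naive estimate $|\mathcal{G}|^{n_{s_{0}+i}-1}\binom{n}{n_{s_{0}+i}-1}$ overshoots because it uses the uniform range $R$ for every spike coordinate. To fix this, I would refine $\mathcal{N}_{i}^{(1)}$ by partitioning $\tilde{J}$ into levels corresponding to the ranks $[n_{j-1},n_{j})$ and use the telescoping ratios $x_{n_{j-1}}^{*}/x_{n_{j}}^{*}\le\gamma d$ (with $C_{\tau}\sqrt{d}$ at the final scale) to confine the $j$-th level entries to an interval of length $2x_{n_{j-1}}^{*}$ rather than $2R$, giving roughly $O(x_{n_{j-1}}^{*}\sqrt{d})$ grid values per coordinate at level $j$. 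Multiplying over levels, combining with the $\binom{n}{n_{s_{0}+i}-1}$ support choice and the polynomial number of level-partitions, and telescoping the resulting product of ratios yields the required $\exp(2n_{s_{0}+i}\log d)$ bound, while $|\mathcal{N}_{i}^{(2)}|=O(\sqrt{d})$ and the polynomial combinatorial slack are absorbed into the $n^{2}$ prefactor.
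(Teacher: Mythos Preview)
Your construction is exactly the one the paper has in mind (it defers to Lemma~6.7 of \cite{LT_duke}): split each $x\in\mathcal{T}_i^*$ into a ``spike'' supported on $\sigma_x([n_{s_0+i}-1])$ and a tail of sup-norm at most $1/(C_\tau\sqrt d)$, discretize the spike at scale $(C_\tau\sqrt d)^{-1}$, and use a one-dimensional grid along $\textbf{1}$ to cancel the $P_{\textbf{e}^\perp}$-part of the residual. Your verification of the three approximation bullets is correct and clean.

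The one genuine slip is in the cardinality bound. You write that the ``number of level-partitions'' of the support $\tilde J$ is polynomial; it is not. Partitioning an $m$-set (with $m=n_{s_0+i}-1$) into blocks of the prescribed sizes $n_1-n_0,\,n_2-n_1,\ldots$ costs a multinomial coefficient, which is exponential. What actually makes the argument go through is that this multinomial is \emph{subdominant}, not small. Writing it as
\[
\binom{m}{n_{s_0+i-1}}\cdot\binom{n_{s_0+i-1}}{n_1-n_0,\,n_2-n_1,\ldots},
\]
the first factor is at most $(em/n_{s_0+i-1})^{n_{s_0+i-1}}$ and the second at most $(s_0+3)^{n_{s_0+i-1}}$; since $n_{s_0+i-1}=o(n_{s_0+i})$ and $s_0=O(\log(1/p)/\log l_0)$, both contributions are $\exp\bigl(o(n_{s_0+i}\log d)\bigr)$, comfortably inside the budget $\exp(2n_{s_0+i}\log d)$. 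Likewise, the product $\prod_j\bigl(2x_{n_{j-1}}^*C_\tau\sqrt d\bigr)^{n_j-n_{j-1}}$ does not literally ``telescope'': after taking logarithms, the last block $[n_{s_0+i-1},n_{s_0+i})$ contributes $\approx n_{s_0+i}\cdot\tfrac12\log d$, while all earlier blocks together contribute only $O(n_{s_0+i-1}\log d)$. Combined with $\binom{n}{m}\le(en/m)^m$ and $|\mathcal N_i^{(2)}|=O(\sqrt d)$, this gives the stated bound. So your outline is right---just replace ``polynomial'' by ``exponential of strictly lower order'' and do the block-by-block summation explicitly instead of invoking a telescoping identity.
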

\begin{myrem}
   Note that, compared to Lemma 6.7 in \cite{LT_duke}, the change in $\Vert x\Vert_{\infty}$ leads to a difference here; however, this does not affect the final conclusion and we still obtain a result similar to that lemma, namely the one stated above. Likewise, in the subsequent net estimates for the $\mathcal{R}$-vectors, a comparable conclusion can also be reached.
\end{myrem}
We also need the anti-concentration inequality of vectors in $\mathcal{T}_{2}\cup \mathcal{T}_{3}$, which is similar to the individual probability in \cite{LT_duke}. Thus, we will provide a concise proof that focuses on highlighting the differences while omitting the identical parts. We begin with some definitions.\par
Fix $q_{0} \le n$ and a partition $J_{0}, J_{1}, \dots ,J_{q_{0}}$ of $[n]$. Let $I_{1}, I_{2}, \dots, I_{q_{0}} \subset [n/2]$ and $V=(v_{ij})$ be an $n/2 \times |J_{0}|$ matrix with $0/1$ entries. Let $\mathcal{I}=\left( I_{1},I_{2},\dots, I_{q_{0}} \right)$ and $M^{0}=\left( \delta_{ij}\right)$ be $n/2 \times n$ Bernoulli(p) random matrix. Consider the event:
\begin{align}
    \mathcal{F}\left( \mathcal{I},V \right):= \left\{ M:\forall k \in [q_{0}] \ I\left(J_{k},M \right)=I_{k} \text{ and } M_{J_{0}}^{0}=V  \right\}.\nonumber
\end{align}
Next, denoted by $\textsf{P}_{\mathcal{F}}$ the induced probability measure on $\mathcal{F}(\mathcal{I},V)$, s.t.
\begin{align}
    \textsf{P}_{\mathcal{F}}\left( A \right) := \frac{\textsf{P}\left( A \right)}{\textsf{P}\left( \mathcal{F} \right)}, \ A \subset \mathcal{F}.\nonumber
\end{align}
Note that $\xi_{ij}$ and the $(\delta_{i1}, \dots,\delta_{in})$ remain independent for all $i \le n/2$ andd $j \le n$.\par 
Finally, for $i \le n/2$ and $k \le q_{0}$, define
\begin{align}
    \xi_{k}(i)=\xi_{k}(M,v,i):= \sum_{j \in J_{k}}{\delta_{ij}\xi_{ij}v_{j}},\nonumber
\end{align}
where $M:=(\delta_{ij}\xi_{ij})_{i,j}$ be $n/2 \times n$ random matrix as a submatrix introduced in Theorem \ref{Theorem A} when $b =0 $ and $M^{0}=\left( \delta_{ij} \right)_{i \le n/2,j\le n}$.\par 
We now give our individual probability.
\begin{mylem}\label{Individual Probability}
    There exist constants $C_{\ref{Individual Probability}}, C'_{\ref{Individual Probability}}>1>c_{\ref{Individual Probability}}>0$ depending on $\xi$ with the following property. Let $p \in (0,0.001],d=pn \ge 2$, set $m_{0}=\left\lfloor  1/(64p)\right\rfloor$, let $m_{1}$ and $m_{2}$ be such that
    \begin{align}
        1 \le m_{1} < m_{2} \le n-m_{1}.\nonumber
    \end{align}
    Let $y \in \mathrm{Span}(\textbf{1})$, and assume that $x \in \mathbb{R}^{n}$ such that
    \begin{align}
        x_{m_{1}}^{*} > 2/3 \ \text{ and } \ x_{i}^{*}=0 \ \text{ for every } i >m_{2}.\nonumber
    \end{align}
    Denote $m = \min{(m_{1},m_{0})}$, and consider the event
    \begin{align}
        E_{\omega}\left( x,y \right):= \left\{ M:\Vert M(x+y)-\omega\Vert_{2} \le \sqrt{c_{\ref{Individual Probability}}md}   \right\},\nonumber
    \end{align}
    where $M:=(\delta_{ij}\xi_{ij})_{i,j}$ be $n/2 \times n$ random matrix as a submatrix introduced in Theorem \ref{Theorem A} when $b =0$ and $\omega \in \mathbb{R}^{n/2}$.\par
    Define 
    \begin{align}
        \mathcal{L}_{card}(x,y)=\max_{\omega \in \mathbb{R}^{n/2}}{\textsf{P}\left( E_{\omega}(x,y)\cap \mathcal{E}_{card}  \right)}.\nonumber
    \end{align}
    Then, if $m_{1} \le m_{0}$, we have 
    \begin{align}
        \mathcal{L}_{card}(x,y) \le 2^{-md/40}.\nonumber
    \end{align}
    Otherwise, if $m_{1} \ge C'_{\ref{Individual Probability}}m_{0}$, we have
    \begin{align}
        \mathcal{L}_{card}(x,y) \le \left( \frac{C_{\ref{Individual Probability}}n}{m_{1}d} \right)^{md/40}.\nonumber
    \end{align}
    Here $\mathcal{E}_{card}$ is the event from Lemma \ref{combinatorial lemma of T2 and T3} with $l = 2m$.
\end{mylem}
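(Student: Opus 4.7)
The plan is to decompose $M(x+y)$ row by row, establish an independent per-row anti-concentration estimate, and then tensorize. Set $J \subset [n]$ to be the set of the top $2m$ coordinates of $x$ by absolute value and condition on the Bernoulli pattern $M^0 = (\delta_{ij})$. On $\mathcal{E}_{card}$ the index set $I := I(J, M^0)$ has cardinality in $[md/8, 4md]$, and for each $i \in I$ there is a unique $j(i) \in J$ with $\delta_{ij(i)} = 1$ and $|\xi_{ij(i)}| \ge a$. Writing $y = y_1 \mathbf{1}$, the $i$-th coordinate of $M(x+y)$ decomposes as
$$Z_i = \xi_{ij(i)}(x_{j(i)} + y_1) + \sum_{k \in \mathrm{supp}(R_i(M^0)) \setminus J} \xi_{ik}(x_k + y_1),$$
and conditionally on $M^0$ the collection $\{Z_i\}_{i \in I}$ is mutually independent.

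Next, I would establish a row-wise anti-concentration bound of the form $\mathcal{L}(Z_i - \omega_i, c_1) \le \beta_0$ for constants $c_1 > 0$ and $\beta_0 < 1$ depending only on $\xi$, valid on a positive fraction of $i \in I$. Since $\xi$ is non-degenerate, $\mathcal{L}(\xi, 0) = \max_i p_i =: \beta_0 < 1$, and by independence of $\xi_{ij(i)}$ from the remainder sum above it suffices to show that for many $i$ one has $|x_{j(i)} + y_1| \ge 1/3$, which then forces $\mathcal{L}(\xi_{ij(i)}(x_{j(i)}+y_1), c_1) \le \beta_0$ once $c_1 < \bar{a}/3$. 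The ``bad'' indices $j \in J$ satisfying $|x_j + y_1| < 1/3$ all lie in the interval $(-y_1 - 1/3, -y_1 + 1/3)$; since the top $m_1$ coordinates of $x$ have $|x_j| > 2/3$ and thus avoid $(-1/3, 1/3)$, the bad indices are confined to a narrow window and must share the sign of $-y_1$. Combined with the column-support concentration from Lemma \ref{all columns concentration} (which gives $|I(\{j\}, M^0)| \lesssim d$ uniformly in $j$), a pigeonhole argument yields a subset $I' \subset I$ with $|I'| \gtrsim md$ on which the anti-concentration bound holds.

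Tensorization via Lemma \ref{tensorization} applied to $\{Z_i - \omega_i\}_{i \in I'}$ and the supremum over $\omega$ then yields
$$\mathcal{L}_{card}(x,y) \le (\beta_0 + \varepsilon)^{|I'|} \le 2^{-md/40},$$
settling the case $m_1 \le m_0$. For the regime $m_1 \ge C'_{\ref{Individual Probability}} m_0$ one additionally exploits $\|x\|_2^2 \gtrsim m_1$: applying Lemma \ref{Levy concentration} with $A$ equal to the top-$m_1$ coordinate set (which satisfies $\|x_A\|_\infty/\|x_A\|_2 \lesssim 1/\sqrt{m_1} \le C_0^{-1}\sqrt{p}$ whenever $m_1 \gtrsim 1/p$) sharpens the per-row Lévy bound to order $\sqrt{n/(m_1 d)}$, and tensorization produces the claimed $(Cn/(m_1 d))^{md/40}$. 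The main obstacle is ensuring the per-row anti-concentration survives an adversarial shift $y_1$ chosen to cancel the dominant coordinates of $x$ on most of $J$; the choice of $J$ as the top $2m$ rather than $m$ coordinates of $x$ is the key structural input that, through the pigeonhole above, guarantees a constant fraction of rows remain good for any such $y_1$.
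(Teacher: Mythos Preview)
Your per-row anti-concentration step has a genuine gap. You take $J$ to be the top $2m$ coordinates of $x$ by absolute value, condition on the full Bernoulli pattern $M^0$, and rely on the term $\xi_{ij(i)}(x_{j(i)}+y_1)$ to supply anti-concentration, which requires $|x_{j(i)}+y_1|\ge 1/3$ for a constant fraction of $i\in I$. Your pigeonhole claim that bad indices $j$ are confined to a narrow window does not bound their \emph{number}: nothing in the hypotheses prevents all top-$2m$ coordinates from being equal. Concretely, take $x_j=1$ for $j\le m_2$ and $y_1=-1$; then $x_j+y_1=0$ on all of $J$, every row is bad, and with $m_2=n-m_1$ the remainder sum $\sum_{k\notin J}\xi_{ik}(x_k+y_1)$ is supported on only $m_1$ columns, so for most rows $Z_i=0$ deterministically. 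Doubling $m$ to $2m$ does not help.

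The paper's fix is a different choice of $J$ together with a coarser conditioning. Each block $J_k$ is the union of $m$ large coordinates $J_k^{(1)}\subset\sigma_x([m_1])$ and $m$ \emph{zero} coordinates $J_k^{(2)}\subset\sigma_x([n-m_1+1,n])$ (guaranteed zero by $m_2\le n-m_1$). One conditions only on the sets $I_k=I(J_k,M^0)$, so that for $i\in I_k$ the location $j_k\in J_k$ remains random and lands in $J_k^{(1)}$ or $J_k^{(2)}$ with probability $1/2$ each. For \emph{any} $y_1$, the two outcomes $\xi_{ij_k}(x_{j_k}+y_1)$ and $\xi_{ij_k}y_1$ differ by $|\xi_{ij_k}x_{j_k}|\ge 2a/3$, whence $\mathcal{L}_{\mathcal F}(\xi_k(i),a/3)\le 1/2$ uniformly in $y$. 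The sharper bound in the regime $m_1\ge C'm_0$ then follows from L\'evy--Kolmogorov--Rogozin applied to $\sum_{k\in A_i}\xi_k(i)$ over $|A_i|\gtrsim m_1/m_0$ independent blocks, not from Lemma~\ref{Levy concentration}: your proposed use of that lemma needs $\|x_A\|_\infty/\|x_A\|_2\lesssim\sqrt{p}$, which is not implied by $x_{m_1}^*>2/3$ since $x_1^*$ may be arbitrarily large.
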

\begin{proof}
    Recall $d=pn$, fix $f=mp/72=md/(72n)$, $x \in \mathbb{R}^{n}$ and $y \in \mathrm{Span}(\textbf{1})$ satisfying the assumption of this lemma. Denote $q_{0}=m_{1}/m$ and without loss of generality assume that either $q_{0}=1$ or $q_{0}$ is a large integer.\par 
    Let $J_{1}^{(1)},J_{2}^{(1)},\dots,J_{q_{0}}^{(1)}$ be a partition of $\sigma_{x}([m_{1}])$ with cardinality $m$. Similarly, let $J_{1}^{(2)},J_{2}^{(2)},\dots,J_{q_{0}}^{(2)}$ be a partition of $\sigma_{x}([n-m_{1}+1,n])$ with cardinality $m$. Furthermore, let
    \begin{align}
        J_{k}:=J_{k}^{(1)}\cup J_{k}^{(2)} \text{ for each } k \in [q_{0}] \text{ and } J_{0}:=[n]\setminus \bigcup_{k=1}^{q_{0}}{J_{k}}.\nonumber
    \end{align}
    Thus, $J_{0},\dots,J_{q_{0}}$ is a partition of $[n]$. Let $M^{0}:=(\delta_{ij})_{i \le n/2,j\le n}$ be an Bernoulli random matrix. For any $J_{k}^{(1)}$ and $J_{k}^{(2)}$, define the sets $I_{k}^{(1)}$ and $I_{k}^{(2)}$ by 
    \begin{equation}
    \begin{aligned}
        I_{k}^{(1)}:=
        \bigg\{ & i \le n/2:\left| \mathrm{supp}\mathrm{R}_{i}(M^{0}) \cap J_{k}^{(1)}  \right| =1, \text{which }j \text{ with } |\xi_{ij}| \ge a  \\
        & \text{ and } \left| \mathrm{supp}\mathrm{R}_{i}(M^{0}) \cap J_{k}^{(2)}  \right|=0 \bigg\}\nonumber
    \end{aligned}
    \end{equation}
    and 
    \begin{equation}
    \begin{aligned}
        I_{k}^{(1)}:=
        \bigg\{ & i \le n/2:\left| \mathrm{supp}\mathrm{R}_{i}(M^{0}) \cap J_{k}^{(2)}  \right| =1, \text{which }j \text{ with } |\xi_{ij}| \ge a  \\
        & \text{ and } \left| \mathrm{supp}\mathrm{R}_{i}(M^{0}) \cap J_{k}^{(1)}  \right|=0 \bigg\}\nonumber
    \end{aligned}
    \end{equation}
    Let $I_{k}=I_{k}^{(1)} \cup I_{k}^{(2)}$. Note that $|J_{k}| \le 2m\le 1/(32p) $, by the definition of the event $\mathcal{E}_{card}$, we have
    \begin{align}
        |I_{k}| \in [md/8, 4md].\nonumber
    \end{align}
    Fix $\mathcal{I}=(I_{1},\dots,I_{q_{0}})$ and $V$ be an $n/2 \times|J_{0}|$ matrix with $0/1$ entries. Similar to the proof of Lemma 6.11 in \cite{LT_duke}. We have 
    \begin{align}
        \mathcal{L}_{card}(x,y) \le \max_{\omega \in \mathbb{R}^{n}}\textsf{P}\left( E_{\omega}(x,y) | \mathcal{F}(\mathcal{I},V) \right).
    \end{align}
    Fix any class $\mathcal{F}$ and recall the definition of $\textsf{P}_{\mathcal{F}}$:
    \begin{align}
        \textsf{P}_{\mathcal{F}}\left( \cdot \right)= \textsf{P}\left( \cdot | \mathcal{F} \right).\nonumber
    \end{align}
    Denote 
    \begin{align}
        A_{i}=\left\{ k \in [q_{0}]: i \in I_{k} \right\} \text{ and } I_{0}= \left\{ i \le n/2: |A_{i}| \ge fq_{0}  \right\}.\nonumber
    \end{align}
    Thourgh a simple estimating to obtain 
    \begin{align}
        |I_{0}| \ge md/9.\nonumber
    \end{align}
    With loss of generality we assume that $I_{0}=[|I_{0}|]$ and $N=\left \lceil  md/9 \right \rceil $. Then $[N] \subset I_{0}$.\par 
    For matrix $M \in E_{\omega}(x,y)$, we have 
    \begin{align}
        \Vert M(x+y)-\omega\Vert_{2}^{2}= \sum_{i=1}^{n/2}{|\langle \mathrm{R}_{i}(M),x+y  \rangle -\omega_{i}|^{2}} \le c_{\ref{Individual Probability}}md.\nonumber
    \end{align}
    Applying Markov's inequality for $a:=\min_{i \le k}|a_{i}|$, we have 
    \begin{align}
        \left|  \left\{  i \le N: \left| \langle \mathrm{R}_{i}(M), x+y  \rangle - \omega_{i} \right|   < a/3 \right\}  \right|  \ge md/9-9c_{\ref{Individual Probability}}md/a^{2} = N_{0},\nonumber
    \end{align}
    where $N_{0}=\left\lceil md/9-9c_{\ref{Individual Probability}}md/a^{2}  \right\rceil$.
    For $i \le N$, denote 
    \begin{align}
        \Omega_{i}=\left\{ M^{0} \in \mathcal{F}, M:\left| \langle \mathrm{R}_{i}(M), x+y  \rangle - \omega_{i} \right| \le a/3 \right\} \text{ and } \Omega_{0}=\mathcal{F}\left( \mathcal{I},V \right).\nonumber
    \end{align}
    Similar to the proof of the Lemma 6.11 in \cite{LT_duke}, we have
    \begin{align}
        \textsf{P}_{\mathcal{F}}(E_{\omega}(x,y)) \le \left( a^{2}e/(81c_{\ref{Individual Probability}}) \right)^{9c_{\ref{Individual Probability}}md/a^{2}} \prod_{i=1}^{N_{0}}\textsf{P}_{\mathcal{F}}\left( \Omega_{i} \right).\nonumber
    \end{align}
    Recall the definition of $\xi_{k}(i)= \xi_{k}(M,x+y,i)$ for $i \in I_{0}$ and $k \in A_{i}$. Then we have
    \begin{align}
        \textsf{P}_{\mathcal{F}}(\Omega_{i}) \le \mathcal{L}_{\mathcal{F}}\left( \sum_{k=0}^{q_{0}}{\xi_{k}(i)},a/3  \right) \le \mathcal{L}_{\mathcal{F}}\left( \sum_{k \in A_{i}}{\xi_{k}(i)}, a/3 \right) \le \frac{C\alpha}{\sqrt{(1-\alpha)fq_{0}}},\nonumber
    \end{align}
    where using the L\'{e}vy-Kolmogorov-Rogozin inequality in \cite{Tikhomirov} and set 
    $$\alpha:= \max_{k \in A_{i}}{\mathcal{L}_{\mathcal{F}}(\xi_{k}(i),a/3)}.$$
    Note that for $j_{k}=\mathrm{supp}\mathrm{R}_{i}(M^{0}) \cap J_{k}$ and $c=\xi_{ij_{k}}y_{1}$, we have
    \begin{align}
        \xi_{k}(i)=\sum_{j\in J_{k}}{(\delta_{ij}\xi_{ij})(x+y)}=\xi_{ij_{k}}x_{j_{k}}+c.\nonumber
    \end{align}
    If $j_{k} \in J_{k}^{(1)}$ we have $|\xi_{ij_{k}}x_{j_{k}}| \ge 2a/3$ and if $j_{k} \in J_{k}^{(2)} $ we have $x_{j_{k}}=0$. 
    \begin{align}
        \mathcal{L}_{\mathcal{F}}(\xi_{k}(i),a/3) = \mathcal{L}_{\mathcal{F}}\left( \xi_{ij_{k}}x_{j_{k}}+c,a/3 \right) \le 1/2:=\alpha.
    \end{align}
    Finally, similar to the proof of the lemma 6.11 in \cite{LT_duke}, we complete the proof of this lemma.
\end{proof}
We now give our main result in this subsection.
\begin{mypropo}\label{T2 and T3}
    Let $n \in \mathbb{N}^{+}$ be large enough and $p <c_{\ref{combinatorial lemma}}$ with $pn \ge 200\log{n}$. Then
    \begin{align}
        \textsf{P}\left(\exists x \in \mathcal{T}_{2} \cup \mathcal{T}_{3}:\Vert Mx\Vert_{2} \le  c_{\ref{T2 and T3}}\frac{(64p)^{\kappa}}{n^{2}(pn)^{3.5}}\Vert x\Vert_{2}  \right) \le \exp{(-10pn)},\nonumber
    \end{align}
    where $M$ from Theorem \ref{Theorem A} and $c_{\ref{T2 and T3}}>0 $ depending on $\xi$.
\end{mypropo}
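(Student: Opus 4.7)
The proof plan combines a net reduction (Lemma~\ref{Net for T2 and T3}), the individual anti-concentration bound (Lemma~\ref{Individual Probability}), and the $M-M'$ symmetrization trick to cancel the bias $b$. By Lemma~\ref{Steep Euclidean norm} it suffices to bound $\textsf{P}(\exists x \in \mathcal{T}_i^*:\Vert Mx\Vert_2 \leq t_0)$ for $i \in \{2,3\}$ and a threshold $t_0 \le c/\sqrt{d}$. The net $\mathcal{N}_i = \mathcal{N}_i^{(1)} + \mathcal{N}_i^{(2)}$ from Lemma~\ref{Net for T2 and T3} has cardinality $\exp(O(n_{s_0+i}\log d))$, with approximation $\Vert x-u-v\Vert_{\textbf{e}} \leq \sqrt{2n}/(C_\tau\sqrt{d})$. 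Crucially, $M-M'$ (for an independent copy $M'$) has entries $\delta_{ij}\xi_{ij}-\delta'_{ij}\xi'_{ij}$ of mean zero, so by Lemma~\ref{Spectral norm} applied separately to $M-\textsf{E}M$ and $M'-\textsf{E}M'$, we have $\Vert M-M'\Vert \leq 2C_{\ref{Spectral norm}}\sqrt{pn}$ on a high-probability event $\mathcal{E}_{\text{sp}}$.

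For each $x \in \mathcal{T}_i^*$ with net approximation $u+v$, let $w = x-u-v$. On $\mathcal{E}_{\text{sp}}$, $\Vert (M-M')w\Vert_2 \leq 2C_{\ref{Spectral norm}}\sqrt{pn}\cdot \Vert w\Vert_{\textbf{e}} \leq C\sqrt{n}/C_\tau$. Applying the pointwise inequality $\textsf{P}(\Vert Mz\Vert_2 \leq t)^2 \leq \textsf{P}(\Vert (M-M')z\Vert_2 \leq 2t)$ together with a union bound over the net reduces the task to bounding $\textsf{P}(\Vert (M-M')(u+v)\Vert_2 \leq \tau)$ for each fixed net element, where $\tau = 2t_0 + C\sqrt{n}/C_\tau$. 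Conditioning on $M'$ and setting $\omega = M'(u+v)$, this becomes $\textsf{P}(\Vert M(u+v)-\omega\Vert_2 \leq \tau)$. Writing $M = M_0 + b\textbf{1}\textbf{1}^T$ with $M_0$ satisfying the $b=0$ hypothesis of Lemma~\ref{Individual Probability}, we absorb the bias into a shifted target $\omega' = \omega - b(\textbf{1}^T(u+v))\textbf{1}$. Restricting to the first $n/2$ rows (so that $\tau \leq \sqrt{c_{\ref{Individual Probability}} md}$ for $C_\tau$ chosen large), Lemma~\ref{Individual Probability} applies with $x = u$ and $y = v$: for $\mathcal{T}_2^*$, choose $m_1 = n_{s_0+1} \leq m_0$ (first case, bound $2^{-md/40}$ with $md \geq cn$); for $\mathcal{T}_3^*$, choose $m_1 = n_{s_0+2} \geq C'_{\ref{Individual Probability}} m_0$ (second case, bound $(Cn/(m_1 d))^{md/40} \le \exp(-cn\log d)$).

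Putting the pieces together with the square-root loss from symmetrization yields
\[
\textsf{P}(\exists x \in \mathcal{T}_i^*:\Vert Mx\Vert_2 \leq t_0) \leq \textsf{P}(\mathcal{E}_{\text{sp}}^c) + \textsf{P}(\mathcal{E}_{\text{card}}^c) + |\mathcal{N}_i|\cdot (\text{individual bound})^{1/2}.
\]
For $\mathcal{T}_2^*$ the dominant term is $\exp(C\sqrt{n/p}\log d - cn)$, and for $\mathcal{T}_3^*$ it is $\exp(Crn\log d - cn\log d)$; both are $\leq \exp(-10pn)$ provided $C_{\ref{Theorem A}}$ is large enough (so $pn \geq C_{\ref{Theorem A}}\log n$ forces the exponent savings to dominate) and $r$ is small enough (to control $\log|\mathcal{N}_3|$ against the individual bound in the $\mathcal{T}_3$ regime). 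The hard part is the bias: for $b \neq 0$, $\textsf{E}M$ is an $\Omega(n)$ rank-one perturbation and $\Vert M\Vert$ is no longer $O(\sqrt{pn})$, invalidating any naive spectral bound on $\Vert Mw\Vert_2$. The $M-M'$ trick is the essential device that cancels $\textsf{E}M$, at the cost of halving the exponent in the individual bound, which remains sup-exponentially small throughout our regime.
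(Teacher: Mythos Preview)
Your overall strategy—net reduction via Lemma~\ref{Net for T2 and T3}, then the individual bound of Lemma~\ref{Individual Probability}—matches the paper, but the symmetrization step with an \emph{independent} copy $M'$ does not work as written. The inequality $\textsf{P}(\Vert Mz\Vert_2\le t)^2\le\textsf{P}(\Vert(M-M')z\Vert_2\le 2t)$ is valid only for a \emph{fixed} vector $z$. In your argument the offending vector $x\in\mathcal{T}_i^*$ is chosen to make $\Vert Mx\Vert_2$ small and therefore depends on $M$; there is no reason $\Vert M'x\Vert_2$ should also be small, so you cannot conclude $\Vert(M-M')x\Vert_2\le 2t_0$. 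Nor can you first pass to the fixed net point $u+v$ and \emph{then} symmetrize: that would require $\Vert M(x-u-v)\Vert_2$ to be small, but when $b\neq0$ the rank-one part $b\mathbf{1}\mathbf{1}^T$ makes $\Vert Mw\Vert_2$ of order $|b|\sqrt{n}/p$ on the $\mathbf{e}$-direction of $w$, which swamps the threshold. So neither ordering of ``symmetrize'' and ``approximate'' closes.

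The paper resolves this with a deterministic variant of your idea: instead of $M-M'$, split $M$ itself into its top and bottom halves $M_1,M_2$ (each $n/2\times n$) and work with $M_1-M_2$. Since $\Vert M_1x\Vert_2^2+\Vert M_2x\Vert_2^2=\Vert Mx\Vert_2^2$, the implication $\Vert Mx\Vert_2\le t_0\Rightarrow\Vert(M_1-M_2)x\Vert_2\le 2t_0$ is \emph{pointwise}, with no probabilistic loss and no dependence issue. The difference $M_1-M_2$ has mean-zero entries, so $\Vert M_1-M_2\Vert\le 2C_{\ref{Spectral norm}}\sqrt{pn}$ on $\mathcal{E}_{\mathrm{norm}}$ and $\Vert(M_1-M_2)(x-u-v)\Vert_2$ is controlled exactly as you computed. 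One then union-bounds over net points, and for each fixed $u+v$ conditions on $M_2$ (which is independent of $M_1$) and applies Lemma~\ref{Individual Probability} with $\omega=M_2(u+v)-b(\mathbf{1}^T(u+v))\mathbf{1}$. This recovers your final display without the square-root loss; your exponent comparisons for $\mathcal{T}_2^*$ and $\mathcal{T}_3^*$ then go through unchanged.
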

\begin{proof}
    Fix $j \in \{2,3 \}$ and let 
    \begin{align}
        \mathcal{E}_{j}:=\left\{ \exists x\in \mathcal{T}_{j}:\Vert Mx\Vert_{2} \le \frac{\sqrt{c_{\ref{Individual Probability}}md}}{4b_{j}}\Vert x\Vert_{2}  \right\},\nonumber
    \end{align}
    where $b_{2}=C_{\ref{Steep Euclidean norm}}^{(2)}n^{2}(pn)^{3}/(64p)^{\kappa}$ and $b_{3}=C_{\ref{Steep Euclidean norm}}^{(2)}C_{\tau}n^{2}(pn)^{3.5}/(64p)^{\kappa}$.\par 
    For applying Lemma \ref{Spectral norm}, set $C=C_{\ref{Spectral norm}}(\xi,10)$ and 
    \begin{align}
        \mathcal{E}_{norm}:=\left\{ \Vert M -(pT+B)\textbf{1}\textbf{1}^{T}\Vert_{2} \le C\sqrt{pn} \right\}.\nonumber
    \end{align}
    Normalize $x\in \mathcal{T}_{j}$ so that $x \in \mathcal{T}_{j}^{*}$. Thus, let $\mathcal{N}_{j}=\mathcal{N}_{j}^{(1)}+\mathcal{N}_{j}^{(2)}$ be the net in Lemma \ref{Net for T2 and T3}. Then there exists $u \in \mathcal{N}_{j}^{(1)} $ such that $u_{n_{s_{0}+j-1}}^{*} \ge 2/3$ and $u_{l}^{*}=0$ for any $l > n_{s_{0}+j}$, and $v \in \mathcal{N}_{j}^{(2)} \subset \mathrm{Span}(\textbf{1})$ such that
    
    $$\Vert x-u-w\Vert_{\textbf{e}} \le \frac{\sqrt{2n}}{C_{\tau}\sqrt{d}}:=\varepsilon.$$
    
    Set $x-u-v=z+w$, where $w = P_{\textbf{e}^{\bot}}(x-u-v)$ and Let $M_{1}$ denote the random matrix consisting of the first $n/2$ rows of $M$ and $M_{2}$ denote the random matrix consisting of the last $n/2$ rows of $M$. Then conditioned on $\mathcal{E}_{j} \cap \mathcal{E}_{norm}$, we have
    \begin{equation}
        \begin{aligned}
            \Vert (M_{1}-M_{2})(u+v)\Vert_{2} 
            & \le 2\Vert Mx\Vert_{2}+ \Vert(M_{1}-M_{2})(x-u-v)\Vert_{2}\\
            & \le \sqrt{c_{\ref{Individual Probability}}md}/2+\Vert (M_{1}-M_{2})(z+w)\Vert_{2}\\
            & \le 2\Vert (M- \textsf{E}M )z\Vert_{2}+2\Vert (M-\textsf{E}M)\textbf{1}\Vert_{2}\frac{\varepsilon}{\sqrt{p}n}+\sqrt{c_{\ref{Individual Probability}}md}/2\\
            & \le \sqrt{c_{\ref{Individual Probability}}md},\nonumber
        \end{aligned}
    \end{equation}
    where using $\Vert w\Vert_{2} \le \frac{\varepsilon}{\sqrt{p}n}$.\par
    Next, we using Lemma \ref{Individual Probability} for $m_{1}=m_{0}=n_{s_{0}+1}$, $m_{2}=n_{s_{0}+2}$ or $m_{1}=n_{s_{0}+3}$ and $m_{1}=n_{s_{0}+2}>m_{0}=n_{s_{0}+1}$, we have 
    \begin{align}
        \textsf{P}\left( \cup_{j=2,3}\mathcal{E}_{j}\cap \mathcal{E}_{norm} \cap \mathcal{E}_{card} \right) \le \exp(-10pn),\nonumber
    \end{align}
    where $\mathcal{E}_{card}$ is the event introduced in Lemma \ref{Individual Probability} and we fix the $M_{2}$ or $M_{1}$.\par 
    We now complete the proof of this poposition by Lemma \ref{Spectral norm} and \ref{combinatorial lemma of T2 and T3}.
\end{proof}

\subsection{\texorpdfstring{$\mathcal{R}$}{R}-vectors}\label{R-vectors section}
In this subsection, we introduce the following bound of $\mathcal{R}$-vectors.
\begin{mypropo}\label{R-vector}
    There are absolute constants $r_{0}$, $\rho_{0}$, constants $C_{\ref{R-vector}}$ and $C'_{\ref{R-vector}}$ depending only on $\xi$ such that the following holds. Let $r \in (0,r_{0})$, $\rho \in (0,\rho_{0})$, $n \in \mathbb{N}^{+}$, and $p \in (0, 0.001]$ be such that $d=pn  \ge C_{\ref{R-vector}}\log{n}$. Then, we have
    \begin{align}
        \textsf{P}\left( \exists x \in \mathcal{R}:\Vert Mx\Vert_{2} \le C'_{\ref{R-vector}}\sqrt{p}n  \right) \le \exp{(-100pn)}.\nonumber
    \end{align}
\end{mypropo}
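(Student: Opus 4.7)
The plan is to combine three ingredients that are largely prepared by the preceding sections: a symmetrization step to remove the bias $b$, a per-vector anti-concentration estimate from Lemma \ref{Levy concentration}, and a net construction for $\mathcal{R}$ that exploits the spread and sparsity-of-head structure built into the definition of $\mathcal{R}_k^i$. First, to bypass the bias term $b$ in the entries of $M$, I would apply the inequality highlighted in the proof sketch,
\begin{align}
\textsf{P}\bigl(\Vert Mx\Vert_2 \le t\bigr)^2 \le \textsf{P}\bigl(\Vert (M-M')x\Vert_2 \le 2t\bigr),\nonumber
\end{align}
with $M'$ an independent copy of $M$. Taking $t = C'_{\ref{R-vector}}\sqrt{p}\,n$, it would suffice to bound $\textsf{P}(\exists x \in \mathcal{R}:\Vert (M-M')x\Vert_2 \le 2t)$ by $\exp(-200pn)$. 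Because the entries of $M-M'$ are symmetric and centred (the bias $b$ cancels), the spectral estimate $\Vert M-M'\Vert \le 2C_{\ref{Spectral norm}}\sqrt{pn}$ from Lemma \ref{Spectral norm} holds with overwhelming probability.

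Second, for a \emph{fixed} $x \in \mathcal{R}$ the definition supplies the hypotheses of the full-matrix statement of Lemma \ref{Levy concentration} with $A=[k:n]$: the ratio $\Vert x_{\sigma_x(A)}\Vert_2/\Vert x_{\sigma_x(A)}\Vert_\infty \ge C_0/\sqrt{p}$ and $\Vert x_{\sigma_x(A)}\Vert_2 \ge \sqrt{n/2}$ are built into $\mathcal{R}_k^1 \cup \mathcal{R}_k^2$. Applied to $M-M'$ with $m = n$ and the value of $C_0$ fixed in Lemma \ref{Levy concentration}, this yields
\begin{align}
\textsf{P}\bigl(\Vert (M-M')x\Vert_2 \le c\sqrt{p}\,n\bigr) \le \exp(-6n)\nonumber
\end{align}
for an absolute constant $c$ depending only on $\xi$.

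Third, I would construct a net $\mathcal{N} \subset \mathbb{R}^n$ for $\mathcal{R}$ at Euclidean scale $\varepsilon \asymp \sqrt{n}$, so that the spectral bound $\Vert M-M'\Vert\cdot \varepsilon \le 2C_{\ref{Spectral norm}}\sqrt{pn}\cdot\sqrt{n} \le c\sqrt{p}\,n / 2$ absorbs the discretization error. The construction mirrors Lemma \ref{Net for T2 and T3}: for each $k \in (n_{s_0+1},n/\log^2 d]$, any $x \in \mathcal{R}_k^i$ is split along $\sigma_x$ into the ``head'' $x_{\sigma_x([k-1])}$ (at most $k-1$ nonzero coordinates, coordinate-wise bounded by the growth function $\textbf{g}$) and the ``tail'' $x_{\sigma_x(A)}$ (spread, with $\ell_2$-norm bounded by $C_\tau d\sqrt{n}$). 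The head contributes at most $\binom{n}{k}\exp(O(k\log n)) \le \exp(O(n/\log d))$ points. The tail is approximated in the reweighted norm $\Vert\cdot\Vert_\mathbf{e}$, where the spread ratio $C_0/\sqrt{p}$ and the bound on $\Vert P_{\mathbf{e}^\perp}(\cdot)\Vert_2$ give a subexponential contribution of size $\exp(c'n)$ with $c' < 6$. A union bound over $k, i$ and over $\mathcal{N}$, combined with the per-vector anti-concentration $\exp(-6n)$, then yields the required $\exp(-200 pn)$ estimate (noting $pn \ll n$).

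The main obstacle is the tail net construction. A naive volumetric estimate on $\{y \in \mathbb{R}^n:\Vert y\Vert_2 \le C_\tau d\sqrt{n}\}$ at scale $\sqrt{n}$ has cardinality of order $\exp(n\log d)$, which overwhelms the anti-concentration factor. The remedy, following Section 7 of \cite{LT_duke}, is to use the norm $\Vert\cdot\Vert_\mathbf{e}$ together with the spread hypothesis to cut the effective entropy along the $\mathbf{1}$-direction; this is precisely why $\Vert\cdot\Vert_\mathbf{e}$ was introduced in the notation section. Once this finer net is established, the rest of the proof is a routine chaining of the symmetrization and the Lemma \ref{Levy concentration} estimate, so the bulk of the work lies in verifying that the sparse-matrix net argument of \cite{LT_duke} transfers to the discrete-but-biased setting of Theorem \ref{Theorem A}.
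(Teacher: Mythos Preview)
Your overall plan—net construction in the $\Vert\cdot\Vert_{\textbf{e}}$ norm, the spectral bound of Lemma \ref{Spectral norm}, and the anti-concentration of Lemma \ref{Levy concentration}—matches the paper's route, and you are right that the only non-routine step is the tail-net entropy. However, your first step contains a genuine gap.

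The squaring inequality $\textsf{P}(\Vert Mx\Vert_2\le t)^2\le\textsf{P}(\Vert(M-M')x\Vert_2\le 2t)$ holds only for a \emph{fixed} vector $x$. It does not yield that bounding $\textsf{P}(\exists x\in\mathcal{R}:\Vert(M-M')x\Vert_2\le 2t)$ by $\exp(-200pn)$ suffices for the proposition: on the intersection $\{\exists x:\Vert Mx\Vert_2\le t\}\cap\{\exists x:\Vert M'x\Vert_2\le t\}$ the two witnessing vectors need not coincide, so no single $x$ with $\Vert(M-M')x\Vert_2\le 2t$ is produced. One could try to repair this by first passing to a net and only then symmetrizing each net point, but then the approximation step must be carried out for $M$ itself; when $b\ne 0$ one has $\Vert M\textbf{e}\Vert_2\asymp n$ rather than $\sqrt{pn}$, and the $\Vert\cdot\Vert_{\textbf{e}}$-net of Lemma \ref{Net for T2 and T3} is calibrated to the latter scale, so extra work in the $\textbf{e}$-direction would be required.

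The paper sidesteps both issues with a different symmetrization. Writing $M_1,M_2$ for the top and bottom $n/2$ rows of $M$ (exactly as in the proof of Proposition \ref{T2 and T3}), one has the \emph{deterministic} implication $\Vert Mx\Vert_2\le t\Rightarrow\Vert(M_1-M_2)x\Vert_2\le 2t$, so the existential event transfers for free. Since the bias $b$ cancels row-wise in $M_1-M_2$, the centred spectral bound $\Vert M_1-M_2\Vert\le 2C_{\ref{Spectral norm}}\sqrt{pn}$ applies and the $\Vert\cdot\Vert_{\textbf{e}}$-net approximation goes through unchanged; Lemma \ref{Levy concentration} with $m=n/2$ then gives individual probability $\exp(-3n)$, which is ample. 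Once you replace $M-M'$ by $M_1-M_2$ in your outline, the remainder of your plan coincides with the paper's proof.
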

\begin{proof}
    We provide a concise proof. To establish the aforementioned estimate, we first define $M_{1}$ and $M_{2}$ analogously to the proof of Proposition \ref{T2 and T3}. Then, considering $\Vert (M_{1}-M_{2})(x)\Vert_{2}$ for $x \in \mathcal{R}$, we apply a net argument similar to Lemma 6.8 in \cite{LT_duke}(or Lemma \ref{Net for T2 and T3} in this paper), and finally combine Lemmas \ref{Spectral norm} and \ref{Levy concentration} to complete the proof of this proposition.
\end{proof}
\section{Proof of main results}\label{Proof of main result section}
The main goal of this section is to prove Theorem \ref{Theorem A}, combining the results of Sections \ref{Preliminaries}, \ref{Unstructured vectors section}, and \ref{Structured vectors section}. Our first step is to show that, with high probability, any vector orthogonal to $M_{n}$ is unstructured.
\begin{mycoro}\label{Structured part}
    There exist $C_{\ref{Structured part}}>1>c_{\ref{Structured part}}$, $\delta,\rho \in (0,1)$ and $r \in (0,1)$ depending on $\xi$ such that the following holds. Let $M_{n}$ be an $n \times n$ random matrix from Theorem \ref{Theorem A} with $n \ge C_{\ref{Structured part}}$ and let $\textbf{g}(\cdot)$ is a growth function satisfying \eqref{g in structured vectors}. Then 
    \begin{equation}
    \begin{aligned}
        \textsf{P} & \bigg( \Vert M_{n}x\Vert_{2}
         \le a_{n}^{-1}\Vert x\Vert_{2} \text{ for some } x \notin \bigcup_{\lambda \ge 0}\left( \mathcal{V}_{n}(r,\textbf{g},\delta,\rho) \right)  \bigg)\\
         & =(1+o_{n}(1))n\textsf{P}(\eta=0)^{n}+(1+o_{n}(1))\binom{n}{2}\textsf{P}(\eta'=\eta)^{n},\nonumber
        \end{aligned}
    \end{equation}
    where $\eta'$ is a independent copy of $\eta$ and 
    \begin{align}
        a_{n}:=\frac{n^{2}(pn)^{2}}{c_{\ref{Structured part}}(64p)^{\kappa}}\min(1,p^{1.5}n), \ \text{ and } \kappa:=\frac{\log(\gamma pn)}{\log(\left\lfloor pn/(4\log(1/p)) \right\rfloor )}.\nonumber
    \end{align}
\end{mycoro}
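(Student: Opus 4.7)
The approach is to paste together the decomposition \eqref{Structured vectors partition} with the three invertibility estimates of Section \ref{Structured vectors section}. By the 1-homogeneity of the inequality $\Vert M_{n}x\Vert_{2}\le a_{n}^{-1}\Vert x\Vert_{2}$, I may normalize any candidate $x$. Since $x\notin\bigcup_{\lambda\ge 0}\lambda\mathcal{V}_{n}(r,\textbf{g},\delta,\rho)$, no nonzero scalar multiple of $x$ is unstructured. If $x_{\lfloor rn\rfloor}^{*}>0$, rescale so that $x_{\lfloor rn\rfloor}^{*}=1$, placing $x$ in $\Upsilon_{n}(r)\setminus\mathcal{V}_{n}$, and apply Proposition \ref{Structured vectors partition} to land in $\mathcal{R}\cup\mathcal{T}$. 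Vectors with $x_{\lfloor rn\rfloor}^{*}=0$ are supported on fewer than $\lfloor rn\rfloor$ coordinates, and for $r$ small enough relative to the parameters fixed in \eqref{lambda} such vectors sit in $\mathcal{T}_{0}\cup\mathcal{T}_{1}$ directly from the definitions of these steep families.

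Once confined to $\mathcal{T}_{0}\cup\mathcal{T}_{1}\cup\mathcal{T}_{2}\cup\mathcal{T}_{3}\cup\mathcal{R}$, I would feed each piece into the matching proposition. Proposition \ref{T0 and T1} provides the threshold $c_{\ref{T0 and T1}}(64p)^{\kappa}/(n^{2}(pn)^{2})$ on $\mathcal{T}_{0}\cup\mathcal{T}_{1}$ with the exceptional probability
\[
(1+o_{n}(1))n\textsf{P}(\eta=0)^{n}+(1+o_{n}(1))\tbinom{n}{2}\textsf{P}(\eta'=\eta)^{n}.
\]
Proposition \ref{T2 and T3} gives the strictly smaller threshold $c_{\ref{T2 and T3}}(64p)^{\kappa}/(n^{2}(pn)^{3.5})$ on $\mathcal{T}_{2}\cup\mathcal{T}_{3}$ with exceptional probability $\exp(-10pn)$, and Proposition \ref{R-vector} provides $C_{\ref{R-vector}}'\sqrt{p}n$ against $\Vert x\Vert_{2}$ on $\mathcal{R}$ (after using the upper bound on $\Vert x_{\sigma_{x}(A)}\Vert_{2}$ built into the definition of $\mathcal{R}_{k}^{1}$ and $\mathcal{R}_{k}^{2}$ to convert that absolute bound into a multiplicative one). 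I would then set $a_{n}^{-1}$ to be the envelope of these three thresholds; the factor $\min(1,p^{1.5}n)$ in the statement precisely interpolates between the $\mathcal{T}_{0}\cup\mathcal{T}_{1}$ regime (where the $(pn)^{2}$ scaling dominates) and the $\mathcal{T}_{2}\cup\mathcal{T}_{3}$ regime (where the additional factor $\sqrt{pn}=p^{1.5}n/\sqrt{p}n\cdot\sqrt{p}n$ appears).

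The final step is the probability accounting. The crucial observation, already made explicit in Section \ref{Notation and Proof sketch}, is that under $p\ge C\log n/n$ one has $\exp(-Rpn)=o_{n}(n\textsf{P}(\eta=0)^{n}+n^{2}\textsf{P}(\eta'=\eta)^{n})$ for every fixed $R\ge 2$. Consequently, the failure probabilities from Propositions \ref{T2 and T3} and \ref{R-vector} are absorbed into the $o_{n}(1)$ multiplying the leading term, and only the bound from Proposition \ref{T0 and T1} remains visible, giving the stated asymptotics.

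The main obstacle is the bookkeeping needed to confirm two compatibility claims: first, that the choice of $r,\delta,\rho$ (and implicitly of the growth function $\textbf{g}$ from \eqref{g in structured vectors}) is admissible simultaneously for Propositions \ref{Structured vectors partition}, \ref{T0 and T1}, \ref{T2 and T3} and \ref{R-vector}; and second, that the sparse case $x_{\lfloor rn\rfloor}^{*}=0$ truly falls within the scope of the $\mathcal{T}_{0}\cup\mathcal{T}_{1}$ analysis (in particular, the constants $C_{1}$ and $\gamma$ in \eqref{lambda} leave enough room to absorb such sparsity once $r$ is chosen small). Both reduce to inspecting the constants fixed in \eqref{lambda} and the index thresholds $n_{j}$ in \eqref{l and s}; no new probabilistic input is required.
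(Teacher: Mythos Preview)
Your approach is correct and matches the paper's own proof, which consists of a single sentence: ``the proof follows directly by combining the results of Section \ref{Structured vectors section} with Proposition \ref{Structured vectors partition}.'' You have simply spelled out what that combination entails---normalizing via homogeneity, invoking Proposition \ref{Structured vectors partition} to split $\Upsilon_{n}(r)\setminus\mathcal{V}_{n}$ into $\mathcal{T}\cup\mathcal{R}$, applying Propositions \ref{T0 and T1}, \ref{T2 and T3}, \ref{R-vector} on the respective pieces, and absorbing the $\exp(-Rpn)$ terms into the $o_{n}(1)$---and flagged the residual bookkeeping (compatibility of parameters, the sparse case $x_{\lfloor rn\rfloor}^{*}=0$) honestly.
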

\begin{proof}
    In fact, the proof follows directly by combining the results of Section \ref{Structured vectors section} with Proposition \ref{Structured vectors partition}.
\end{proof}
For the unstructured vector, as a version of classic ``invertibility via distance ", we have the following lemma.
\begin{mylem}\label{Invertibility via distance}
    Let $r,\delta,\rho \in (0,1)$ and $\textbf{g}$ be a growth function. Let $n \ge 30/r$ and $M_{n}$ be an $n \times n $ random matrix from Theorem \ref{Theorem A}. Then for all $\varepsilon>0$ we have
    \begin{equation}
        \begin{aligned}
           \textsf{P}\big( & \Vert M_{n}x\Vert_{2} \le \varepsilon \Vert x\Vert_{2} \text{ for some } x \in \mathcal{V}_{n}(r,\textbf{g},\delta,\rho)  \big)\\
           & \le \frac{8}{(rn)^{4}}\sum_{i_{1},\dots,i_{4}}\textsf{P}\left( \mathrm{dist}\left( \mathrm{C}_{i_{j}}(M_{n}),H_{i_{j}}(M_{n})\right) \le \varepsilon
           b_{n}  \text{ for all } j \in [4]\right),\nonumber
        \end{aligned}
    \end{equation}
    where the summand is taken from all pairs $(i_{1},\dots,i_{4})$ with $i_{j} \ne i_{t}$ for $j \ne t$ and $b_{n}=\sum_{i=1}^{n}\textbf{g}(i)$.
\end{mylem}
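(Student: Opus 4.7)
The strategy is the classical ``invertibility via distance'' reduction combined with a fourth descending factorial Markov argument. The geometric input is the elementary inequality
\begin{align}
\Vert M_n x\Vert_2 \ge |x_i|\cdot \mathrm{dist}\bigl(\mathrm{C}_i(M_n),H_i(M_n)\bigr),\qquad i \in [n], \nonumber
\end{align}
obtained by expanding $M_n x = \sum_j x_j \mathrm{C}_j(M_n)$ and extracting the component of $\mathrm{C}_i(M_n)$ orthogonal to $H_i(M_n)$. My plan is to combine this with two structural facts about $\mathcal{V}_n(r,\textbf{g},\delta,\rho)$: the gradual property, which controls $\Vert x\Vert_2$ by $b_n$; and the normalization $x_{\lfloor rn \rfloor}^* = 1$, which guarantees that many coordinates of $x$ have absolute value at least $1$.

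Concretely, I would fix any $x \in \mathcal{V}_n(r,\textbf{g},\delta,\rho)$ witnessing $\Vert M_n x\Vert_2 \le \varepsilon \Vert x\Vert_2$. Using $x_i^* \le \textbf{g}(n/i)$ together with the monotonicity of $\textbf{g}$, one derives $\Vert x\Vert_2 \le b_n$. For each of the at least $\lfloor rn \rfloor$ indices $i$ with $|x_i| \ge 1$, the geometric inequality above gives
\begin{align}
\mathrm{dist}\bigl(\mathrm{C}_i(M_n),H_i(M_n)\bigr) \le \varepsilon \Vert x\Vert_2 / |x_i| \le \varepsilon b_n. \nonumber
\end{align}
Hence, on the event in question, the random set
\begin{align}
I := \bigl\{i \in [n] : \mathrm{dist}(\mathrm{C}_i(M_n), H_i(M_n)) \le \varepsilon b_n\bigr\}, \nonumber
\end{align}
which depends only on $M_n$ and not on the particular witness $x$, has cardinality at least $\lfloor rn \rfloor$.

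Applying Markov's inequality to the fourth descending factorial moment, I would write
\begin{align}
\textsf{E}\bigl[|I|(|I|-1)(|I|-2)(|I|-3)\bigr] = \sum_{(i_1,\ldots,i_4)} \textsf{P}\bigl(\mathrm{dist}(\mathrm{C}_{i_j},H_{i_j}) \le \varepsilon b_n \ \forall j \in [4]\bigr), \nonumber
\end{align}
the sum running over ordered $4$-tuples of distinct indices. Markov then yields
\begin{align}
\textsf{P}(|I| \ge \lfloor rn \rfloor) \le \frac{\sum_{(i_1,\ldots,i_4)} \textsf{P}(\cdots)}{\lfloor rn \rfloor(\lfloor rn \rfloor - 1)(\lfloor rn \rfloor - 2)(\lfloor rn \rfloor - 3)}. \nonumber
\end{align}
For $n \ge 30/r$, $\lfloor rn \rfloor \ge 29$, so the denominator is at least $(rn-4)^4 \ge (rn)^4/8$, producing the constant $8$ in the statement. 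The only point requiring mild technical care is the norm bound $\Vert x\Vert_2 \le b_n$, which I would verify via $\Vert x\Vert_2 \le \Vert x\Vert_1 \le \sum_{i=1}^n \textbf{g}(n/i)$ together with a monotonicity comparison to $b_n = \sum_i \textbf{g}(i)$. Apart from that bookkeeping, the argument is essentially deterministic linear algebra followed by a single Markov step, and I do not anticipate any conceptual obstacle.
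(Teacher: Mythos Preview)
Your proposal is correct and follows the classical Rudelson--Vershynin ``invertibility via distance'' argument, which is precisely what the paper invokes (the lemma is stated there without proof, as a ``version of classic `invertibility via distance'\,''). The only step you flag as needing care---the bound $\Vert x\Vert_2 \le b_n$---goes through exactly as you indicate: $n/i \le n-i+1$ for $1 \le i \le n$, so monotonicity of $\textbf{g}$ gives $\sum_{i=1}^n \textbf{g}(n/i) \le \sum_{j=1}^n \textbf{g}(j) = b_n$.
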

With all requisite groundwork now firmly in place, we proceed to the proof of the main theorem.
\begin{proof}[\textsf{Proof of Theorem \ref{Theorem A}}]
    Fix parameters $r,\delta,\rho,a_{n}$ be taken in Corollary \ref{Structured part}, $\textbf{g}(\cdot)$ satisfying \eqref{g in structured vectors}, and $b_{n}:=\sum_{i=1}^{n}\textbf{g}(i)$. Denote by $\mathcal{E}$ the complement of the event
    \begin{align}
        \left\{ \Vert M_{n}x\Vert_{2}
         \le a_{n}^{-1}\Vert x\Vert_{2} \text{ or } \Vert M_{n}^{T}x\Vert_{2}
         \le a_{n}^{-1}\Vert x\Vert_{2} \text{ for some } x \notin \bigcup_{\lambda \ge 0}\left( \mathcal{V}_{n}(r,\textbf{g},\delta,\rho) \right) \right\}.\nonumber
    \end{align}
    For $i\in [4]$ denote 
    \begin{align}
        \mathcal{E}_{i}:=\left\{  \mathrm{dist}\left( \mathrm{C}_{i}(M_{n}),H_{i}(M_{n}) \right) \le a_{n}^{-1}t \right\}.\nonumber
    \end{align}
    Applying Corollary \ref{Structured part} and Lemma \ref{Invertibility via distance}, we have 
    \begin{equation}
        \begin{aligned}
            \textsf{P}\left( s_{\min}(M_{n}) \le t(a_{n}b_{n})^{-1} \right) \le P_{s}+\frac{8}{r^{4}}\textsf{P}\left(\mathcal{E}\cap \bigcap_{i=1}^{4}\mathcal{E}_{i}  \right),\nonumber
        \end{aligned}
    \end{equation}
    where 
    \begin{align}
        P_{s}:=(2+o_{n}(1))n\textsf{P}(\eta=0)^{n}+(1+o_{n}(1))n(n-1)\textsf{P}(\eta'=\eta)^{n}.\nonumber
    \end{align}
    Consider the events for $i \le 4$,
    \begin{align}
        \Omega_{i}:=\left\{  \left| \left\{ j \in [n]:\eta_{ij}=b  \right\} \right| \in [pn/8,8pn] \right\} \text{ and } \Omega:=\bigcup_{i=1}^{4}\Omega_{i}.\nonumber
    \end{align}
    Applying Lemma \ref{pn/8-8pn}, we obtain
    \begin{align}
        \textsf{P}\left( \Omega^{c} \right) \le (1-p)^{2n}.\nonumber
    \end{align}
    Furthermore, we get 
    \begin{align}
        \textsf{P}\left( \mathcal{E}\cap\bigcap_{i=1}^{4}\mathcal{E}_{i} \right) \le (1-p)^{2n}+\textsf{P}\left( \mathcal{E}\cap \mathcal{E}_{1}\cap\Omega_{1}  \right).\nonumber
    \end{align}
    Let $\textbf{Y}$ be a random unit vector orthogonal to $H_{1}(M_{n})$, consider on $\mathcal{E}_{1}$,
    \begin{align}
        \left| \left\langle \textbf{Y},\mathrm{C}_{i}(M_{n})  \right\rangle  \right| \le \Vert M_{n}^{T}\textbf{Y}\Vert_{2} \le a_{n}^{-1}t.\nonumber
    \end{align}
    It implies that $\textbf{Z}:=\textbf{Y}/\textbf{Y}_{\left\lfloor rn \right\rfloor}^{*} \in \mathcal{V}_{n}(r,\textbf{g},\delta,\rho)$. Furthermore, we have 
    \begin{align}
        P_{0}=\textsf{P}\left( \mathcal{E}_{1}\cap \mathcal{E}\cap\Omega  \right) \le \textsf{P}\left( \left\{ \exists \textbf{Z}\in H_{1}^{\perp}\cap\mathcal{V}_{n}:\Vert M_{n} \textbf{Z}\Vert_{2} \le t a_{n}^{-1}b_{n} \right\} \cap \Omega_{1}   \right).\nonumber
    \end{align}
    Applying Theorem \ref{Large U degree} for $R =4$, there are $K_{1} \ge 1$ and $K_{2} \ge4$ such that with probability at least $1-e^{-4pn}$,
    \begin{equation}
        \begin{aligned}
            H_{1}(M_{n})^{\perp} &  \cap \mathcal{V}_{n}(r,\textbf{g},\delta,\rho)\\
             \subset & \big\{ x \in \mathbb{R}^{n}:x_{\left\lfloor rn \right\rfloor}^{*}=1,\mathrm{UD}_{n}^{\xi}(x,m,K_{1},K_{2}) \ge \exp(Rpn) \\
            & \text{ for all } pn/8 \le m \le 8pn \big\}.\nonumber
        \end{aligned}
    \end{equation}
    Thus, we get 
    \begin{equation}
        \begin{aligned}
        P_{0} 
        & \le \exp(-4pn)+\sup_{\substack{m \in [pn/8,8pn],y \in \Upsilon(r) \\ \mathrm{UD}_{n}^{\xi}(y,m,K_{1},K_{2}) \ge \exp(4pn)}}\textsf{P}\left(\left| \left\langle y,\mathrm{C}_{i}(M_{n})  \right\rangle  \right|\le t a_{n}^{-1}b_{n} \big| \Omega_{1}  \right)\\
        & \le (1+C_{\ref{Small ball prob via UD}})\exp(-4pn)+\frac{C_{\ref{Small ball prob via UD}}b_{n}}{a_{n}\sqrt{pn/8}}t .\nonumber
    \end{aligned}
    \end{equation}
    Therefore,
    \begin{align}
        \textsf{P}\left( s_{\min}(M_{n}) \le t(a_{n}b_{n})^{-1} \right) \le P_{s}+\frac{Cb_{n}}{a_{n}r^{4}\sqrt{pn}}t.\nonumber
    \end{align}
    As the last step, by rescaling $t$ we have 
    \begin{align}
        \textsf{P}\left( s_{\min}(M_{n}) \le t\frac{ cr^{2}\sqrt{pn} }{b_{n}^{2}} \right) \le P_{s}+t.\nonumber
    \end{align}
    Note that for large $n$, we have 
    \begin{align}
        \frac{ cr^{2}\sqrt{pn} }{b_{n}^{2}} \ge \exp(-3\log^{2}(2n)),\nonumber
    \end{align}
    which implies the result.
\end{proof}
\textbf{Acknowledgment:} I would like to thank Prof. Hanchao Wang for his valuable remarks and support. This work was supported by the Youth Student Fundamental study Funds of Shandong University  (No. SDU-QM-B202407).

%%===========================================================================================%%
%% If you are submitting to one of the Nature Portfolio journals, using the eJP submission   %%
%% system, please include the references within the manuscript file itself. You may do this  %%
%% by copying the reference list from your .bbl file, paste it into the main manuscript .tex %%
%% file, and delete the associated \verb+\bibliography+ commands.                            %%
%%===========================================================================================%%

\end{document}